\documentclass[10pt, oneside,reqno]{amsart}
\usepackage{amssymb}
\usepackage{amsmath}
\usepackage[normalem]{ulem}
\usepackage{multirow}
\usepackage{array}
\usepackage{graphicx}
\graphicspath{ {./images/} }
\usepackage{xcolor}
\usepackage{graphicx} 
\usepackage{mathrsfs}
\usepackage[textwidth=14mm]{todonotes}
\usepackage{cancel}
\usepackage{amsthm}
\usepackage{full page}
\allowdisplaybreaks
\usepackage{geometry,mathtools}
\newtheorem{thm}{Theorem}[section]
\newtheorem{prop}[thm]{Proposition}
\newtheorem{cor}[thm]{Corollary}
\newtheorem{lem}[thm]{Lemma}
\theoremstyle{definition}

\newtheorem{rem}[thm]{Remark}
\newtheorem{dfn}[thm]{Definition}

\newtheorem{ex}[thm]{Example}

\setlength{\marginparwidth}{2cm}
\reversemarginpar

\newcommand{\End}{\mbox{\rm End}}
\newcommand{\Hom}{\mbox{\rm Hom}}
\newcommand{\Res}{\mbox{\rm Res}}

\begin{document}
\title{On $\mathbb{N}$-graded vertex algebras associated with Gorenstein algebras}

\author{Alex Keene}
\address{Department of Mathematics, Illinois State University, Normal, IL 61790}
\email{akeene@ilstu.edu}
\author{Christian Soltermann}
\address{Department of Mathematics, Illinois State University, Normal, IL 61790} \email{cjsolte@ilstu.edu} 

\author{Gaywalee Yamskulna}\address{Department of Mathematics, Illinois State University, Normal, IL 61790} \email{gyamsku@ilstu.edu } 
\thanks{The last named author was supported by the College of Arts and Sciences at Illinois State University}

\subjclass{Primary 17B69}


\keywords{Vertex algebras, Indecomposable, Irrational, and Gorenstein algebras.}

\begin{abstract}This paper investigates the algebraic structure of indecomposable $\mathbb{N}$-graded vertex algebras $V = \bigoplus_{n=0}^{\infty} V_n$, emphasizing the intricate interactions between the commutative associative algebra $V_0$, the Leibniz algebra $V_1$ and how non-degenerate bilinear forms on $V_0$ influence their overall structure. We establish foundational properties for indecomposability and locality in $\mathbb{N}$-graded vertex algebras, with our main result demonstrating the equivalence of locality, indecomposability, and specific structural conditions on semiconformal-vertex algebras. The study of symmetric invariant bilinear forms of semiconformal-vertex algebra is investigated. We also examine the structural characteristics of $V_0$ and $V_1$, demonstrating conditions under which certain $\mathbb{N}$-graded vertex algebras cannot be quasi vertex operator algebras, semiconformal-vertex algebras, or vertex operator algebras, and explore $\mathbb{N}$-graded vertex algebras $V=\bigoplus_{n=0}^{\infty}V_n$ associated with Gorenstein algebras. Our analysis includes examining the socle, Poincar\'{e} duality properties, and invariant bilinear forms of $V_0$ and their influence on $V_1$, providing conditions for embedding rank-one Heisenberg vertex operator algebras within $V$. Supporting examples and detailed theoretical insights further illustrate these algebraic structures.
\end{abstract}
\maketitle
\section{Introduction}
The majority of research on $\mathbb{N}$-graded vertex algebras $V = \bigoplus_{n=0}^{\infty} V_n$ and their representations—both from mathematical and physical perspectives—has often concentrated on vertex algebras of CFT type, which are characterized by having $V_0 = \mathbb{C}{\bf 1}$. However, other substantial classes of $\mathbb{N}$-graded vertex algebras fall outside this scope. Examples include vertex algebras derived from Toroidal Lie algebras (see \cite{Berman2002Representations, Li2012Toroidal, Kong2016Twisted, Szczesny2021Toroidal}, and reference therein) and the vertex algebra associated with the $\beta\gamma$-system, which is fundamental in free field realizations of affine Lie algebras and has played a pivotal role in multiple aspects of conformal field theory (see for instance \cite{Wakimoto1986Fock, Frenkel2001Vertex, Feigin1988Family, Feigin1990Affine, Feigin1990Representations, Barron2022Rationality} and references therein).

Over time, numerous developments have underscored the critical role that the representation theory of $\mathbb{N}$-graded vertex algebras $V=\bigoplus_{n=0}^{\infty}V_n$ such that $\dim V_0\geq 2$ has played in advancing the general theory of vertex algebras. In a series of studies involving Gerbes of chiral operators and the chiral de Rham complex \cite{MSV, MS1, MS2}, the structure of $\mathbb{N}$-graded vertex algebras $V = \bigoplus_{n=0}^{\infty} V_n$ with dimension of $V_0$ not necessary one dimensional has been investigated. For such algebras, the bilinear operation $(u, v) \mapsto u_i v$ (for $i \in\{ 0,1\}$) is closed on $V_0 \oplus V_1$, with the skew-symmetry and Jacobi identities giving rise to compatibility relations that form the foundation of the notion of a 1-truncated conformal algebra. Furthermore, $V_0$ with the product $(a, a') \mapsto a_{-1} a'$ is a unital commutative associative algebra with ${\bf 1}$ as the identity, while $V_0$ (as a nonassociative algebra) acts on $V_1$ by $a \cdot u = a_{-1} u$. The algebraic structure of $V_0 \oplus V_1$ is captured by the concept of a vertex $\mathcal{A}$-algebroid, where $\mathcal{A}$ is a commutative associative algebra. In frontier work by Gorbounov, Malikov, Schechtman, and Vaintrob, these $\mathbb{N}$-graded vertex algebras were linked to the cohomology of the chiral de Rham complex on a complex manifold $M$, identifying $V_0$ with the de Rham cohomology $H^*(M)$. The construction of an $\mathbb{N}$-graded vertex algebra $V = \bigoplus_{n=0}^{\infty} V_n$ from any vertex $A$-algebroid, ensuring $V_0 = \mathcal{A}$ and with the vertex $\mathcal{A}$-algebroid $V_1$ isomorphic to the original vertex algebroid was established. Li and the third author of this paper further classified simple twisted and non-twisted modules of the vertex algebras linked to vertex algebroids in terms of simple modules for specific Lie algebroids \cite{LiY, LiY2}. Recently, Barnes, Martin, Service, and the third author of this paper constructed and analyzed $\mathbb{N}$-graded vertex algebras associated with vertex algebroids that are cyclic non-Lie left Leibniz algebras of small dimensions, showing relationships between these vertex algebras and the vertex operator algebra associated with a rank-one Heisenberg algebra \cite{Barnes2024}.

The $C_2$-cofiniteness property has been vital in studying vertex algebra representation theory. Although much literature on vertex algebras and their representations has focused on rational $C_2$-cofinite vertex algebras \cite{Borcherds1986Vertex, Borcherds1992Monstrous, Dong1993Vertex, DongLepowsky1993Generalized, Dong2000Modular, Dong2006Shifted, Dong1997Quantum, FLM2, Frenkel1992Vertex}, studies on irrational $C_2$-cofinite vertex algebras remain limited, with few families documented (see for instance \cite{Abe2007Orbifold, Adamovic2014C2Cofinite, Adamovic2019Fusion, Carqueville2006Nonmeromorphic, Ridout2015Bosonic} and reference therein). Identifying additional families of irrational $C_2$-cofinite vertex algebras thus remains essential. In \cite{JY2, BuY}, Jitjankarn, Bui, and the third author constructed indecomposable, non-simple $\mathbb{N}$-graded vertex algebras meeting the $C_2$-cofiniteness condition by employing finite-dimensional vertex $\mathcal{A}$-algebroids $\mathcal{B}$ that are simple Leibniz algebras. The one-to-one correspondence between simple modules of these vertex algebras and specific types of rational affine vertex operator algebras were established.

Theoretical advancements have also shown that an $\mathbb{N}$-graded vertex operator algebra $V = \bigoplus_{n=0}^{\infty} V_n$ with $\dim V_0 \geq 2$ is indecomposable if and only if $V$ is local if and only if $V_0$ is a local algebra, as demonstrated in \cite{DongMason2004}. Later, Jitjankarn and the third author formulated criteria for $\mathbb{N}$-graded vertex algebras with $\dim V_0 \geq 2$ to be indecomposable non-simple vertex algebras, exploring the influence of simple Leibniz algebras on their structure (cf. \cite{JY}). In \cite{Mason2014}, Mason and the third author examined self-dual $\mathbb{N}$-graded vertex operator algebras that are $C_2$-cofinite, demonstrating how vertex operators within the Levi factor of the Leibniz algebra $V_1$ generate an affine Kac-Moody vertex operator subalgebra and how, in cases with a de Rham structure, $V_0$ exhibits characteristics similar to the de Rham cohomology of a complex manifold with Poincaré duality.

This paper aims to explore further the algebraic structure of indecomposable $\mathbb{N}$-graded vertex algebras $V = \bigoplus_{n=0}^{\infty} V_n$ and the roles of the commutative associative algebra $V_0$ and the Leibniz algebra $V_1$ within $V$. We investigate how non-degenerate bilinear forms on $V_0$ affect the structure of $\mathbb{N}$-graded vertex algebras. Section \ref{indecomposablesection} sets the foundation for studying vertex algebras' indecomposability and locality properties. The main result, Theorem \ref{Maintheoremforindecomposibility}, states that if $V = \bigoplus_{n=0}^{\infty} V_n$ is a semiconformal-vertex algebra with a countable number of maximal ideals, then following conditions are equivalent:
$V$ is local;
$V$ is indecomposable;
$Z(V) = \{ v \in V \mid D(v) = 0 \}$ is a local algebra; and 
$V_0$ is a local algebra.
This theorem generalizes the result in \cite{DongMason2004} for $\mathbb{N}$-graded vertex operator algebras. In Section \ref{bilinearformssection}, we extend the theory of symmetric invariant bilinear forms on vertex operator algebras that was developed by Li in \cite{Li1994} to semiconformal-vertex algebras, establishing relationships with the dual space of $V_0 / L(1)V_1$. This result fills gaps in the literature by providing a rigorous approach to understanding symmetric invariant bilinear forms for semiconformal-vertex algebras, building upon and refining methods from \cite{Li1994}.
Section \ref{knownresultsonalgbraicstructuresection} reviews known results on the algebraic structures of $V_0$ and $V_1$ for certain $\mathbb{N}$-graded vertex algebras. Also, we show that if $V_1$ contains $sl_2$ and satisfies certain conditions, $V$ cannot be a quasi vertex operator algebra, semiconformal-vertex algebras, or vertex operator algebra. In Section \ref{vertexalgebrasgorensteinring}, we dive into the study of vertex algebras associated with Gorenstein algebras. We show that if $V=\bigoplus_{n=0}^{\infty}V_n$ is a $\mathbb{N}$-vertex algebra and $V_1$ contains a simple Lie algebra with a specific condition, then $V_0$ is not Gorenstein. Also, when $V_0$ is a Gorenstein algebra and $V$ satisfies suitable conditions, $V_1$ is a solvable Leibniz algebra. Later, we explore the algebraic structure of the $\mathbb{N}$-graded vertex algebras $V=\bigoplus_{n=0}^{\infty}V_n$ when $V_0$ is a Gorenstein algebra, we study the role of the socle of $V_0$, Poincar\'{e} duality property of $V_0$ and non-degenerate invariant bilinear form of $V_0$ on the algebraic structure of $V_1$ and bilinear forms on $V_1$, and we investigate conditions on $V_0\oplus V_1$ that yields a rank one Heisenberg vertex operator algebra inside $V$. Section \ref{examples} provides supporting examples. Lastly, Section \ref{appendix} includes background on the Nilradical and Jacobson radical of rings, Gorenstein algebra properties, and left Leibniz algebras.


\section{On Indecomposable semi-conformal $\mathbb{N}$-graded vertex algebras}\label{indecomposablesection}

In this section, we review the definitions of a quasi vertex operator algebra and a semiconformal-vertex algebra, along with key properties of vertex algebras that will be utilized throughout this section. Additionally, we establish several results necessary for demonstrating the equivalence between the indecomposability property, the locality property of an $\mathbb{N}$-graded vertex algebra $V = \bigoplus_{n=0}^{\infty} V_n$, and the locality property of the unital commutative associative algebra $V_0$ when $V$ is a semiconformal-vertex operator algebra. Furthermore, we examine the properties of a particular class of homomorphisms of vertex algebras that will be relevant for our discussion in Section \ref{bilinearformssection}.
\begin{dfn}\cite{LepowskyLi2004} A {\em vertex algebra} is a vector space $V$ equipped with a linear map 
\begin{eqnarray*}
    Y:V&\rightarrow&\End(V)[[x,x^{-1}]]\\
    v&\mapsto&Y(v,x)=\sum_{n\in\mathbb{Z}}v_nx^{-n-1} (\text{ where }v_n\in \End(V))
\end{eqnarray*}
where $Y(v,x)$ is called the vertex operator associated with $v$ and a distinguished vector ${\bf 1}\in V_0$ (the vacuum vector), satisfying the following conditions for $u,v\in V$:
\begin{eqnarray}
    &&u_nv=0\text{ for $n$ sufficiently large};\\
    &&Y({\bf 1},x)=1_V;\\
    &&Y(v,x){\bf 1}\in V[[x]]\text{ and }\lim_{x\rightarrow 0}Y(v,x){\bf 1}=v;\\
    &&x_0^{-1}\delta\left(\frac{x_1-x_2}{x_0}\right)Y(u,x_1)Y(v,x_2)-x_0^{-1}\delta\left(\frac{x_2-x_1}{-x_0}\right)Y(v,x_2)Y(u,x_1)\\
    &&=x_2^{-1}\delta\left(\frac{x_1-x_0}{x_2}\right) Y(Y(u,x_0)v,x_2).\nonumber
\end{eqnarray} 
\end{dfn}
We define a linear operator $D$ on $V$ by $D(v)=v_{-2}{\bf 1}$ for $v\in V$. Then for $v\in V$, $n\in\mathbb{Z}$, $Y(v,x){\bf 1}=e^{xD}v$, ${[D,v_n]}=(Dv)_n=-nv_{n-1}$.

\begin{prop}\cite{LepowskyLi2004} Let $V$ be a vertex algebra. Then $Y(u,x)v=e^{xD}Y(v,-x)u$. 
\end{prop}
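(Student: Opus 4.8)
The plan is to establish the skew-symmetry formula by the standard route: deduce it from the Jacobi identity together with the creation property $Y(v,x)\mathbf{1}=e^{xD}v$ and the $D$-conjugation formula, both of which are recorded in the excerpt.

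\emph{Step 1 (weak commutativity).} From the Jacobi identity, taking $\Res_{x_0}$ yields the commutator formula
\[
Y(u,x_1)Y(v,x_2)-Y(v,x_2)Y(u,x_1)=\Res_{x_0}\,x_2^{-1}\delta\!\left(\frac{x_1-x_0}{x_2}\right)Y(Y(u,x_0)v,x_2).
\]
Multiplying by $(x_1-x_2)^N$ and using the substitution property of the formal delta function (which replaces $(x_1-x_2)^N$ by $x_0^N$ against $x_2^{-1}\delta((x_1-x_0)/x_2)$), and then choosing $N$ so large that $x_0^N\,Y(u,x_0)v$ contains no negative powers of $x_0$, the residue on the right vanishes. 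Hence there is $N\in\mathbb{N}$, depending on $u,v$, with
\[
(x_1-x_2)^N Y(u,x_1)Y(v,x_2)=(x_1-x_2)^N Y(v,x_2)Y(u,x_1).
\]

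\emph{Step 2 (conjugation).} From $[D,v_n]=(Dv)_n=-nv_{n-1}$ one gets $[D,Y(v,x)]=\frac{d}{dx}Y(v,x)$, hence $e^{x_0D}Y(v,x)e^{-x_0D}=Y(v,x+x_0)$ with the right side expanded in nonnegative powers of $x_0$. Combining this with the creation property gives $Y(u,x_1)Y(v,x_2)\mathbf{1}=Y(u,x_1)e^{x_2D}v=e^{x_2D}Y(u,x_1-x_2)v$, with $(x_1-x_2)^{-k}$ expanded in nonnegative powers of $x_2$.

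\emph{Step 3 (specialize).} Apply the weak commutativity identity to $\mathbf{1}$ and rewrite both sides using Step 2 and $Y(v,x_2)\mathbf{1}=e^{x_1D}u$ (via the creation property):
\[
(x_1-x_2)^N e^{x_2D}Y(u,x_1-x_2)v=(x_1-x_2)^N Y(v,x_2)e^{x_1D}u.
\]
Enlarging $N$ so that $(x_1-x_2)^N Y(u,x_1-x_2)v$ has no negative powers of $(x_1-x_2)$, the left side lies in $V[[x_1,x_2]]$; in particular neither side has negative powers of $x_1$, so we may set $x_1=0$. Using $e^{0\cdot D}u=u$ we obtain $(-x_2)^N e^{x_2D}Y(u,-x_2)v=(-x_2)^N Y(v,x_2)u$, and cancelling the injective factor $(-x_2)^N$ gives $e^{x_2D}Y(u,-x_2)v=Y(v,x_2)u$. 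Replacing $x_2$ by $x$ and interchanging the roles of $u$ and $v$ yields $Y(u,x)v=e^{xD}Y(v,-x)u$.

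The one delicate point is the passage from an identity of formal series to its specialization at $x_1=0$: one must track the regions of expansion in the creation and conjugation formulas and invoke the truncation condition $u_nv=0$ for $n$ large, to be sure that after multiplying by $(x_1-x_2)^N$ with $N$ sufficiently large every term genuinely lies in $V[[x_1,x_2]]$ (equivalently in $V((x_2))[[x_1]]$), so that the substitution $x_1\mapsto 0$ is legitimate.
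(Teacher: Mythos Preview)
The paper does not supply its own proof of this proposition; it is simply quoted from \cite{LepowskyLi2004}. Your argument is precisely the standard proof given there (Proposition 3.1.19 in Lepowsky--Li): deduce weak commutativity from the Jacobi identity, apply both sides to the vacuum, use the creation property and the $D$-conjugation formula to land in $V[[x_1,x_2]]$ (respectively $V((x_2))[[x_1]]$), specialize $x_1\to 0$, and cancel the common factor $(-x_2)^N$. The reasoning is correct. One small slip: in Step~3 you wrote ``$Y(v,x_2)\mathbf{1}=e^{x_1D}u$'' where you meant ``$Y(u,x_1)\mathbf{1}=e^{x_1D}u$''; with that corrected the argument goes through exactly as you describe.
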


\begin{dfn}\cite{LepowskyLi2004} An {\em ideal} of the vertex algebra $V$ is a subspace $I$ of $V$ such that $v_nw, w_nv\in I$ for all $v\in V, w\in I, n\in\mathbb{Z}$.
\end{dfn}
\begin{rem}
    Under the condition that $D(I)\subseteq I$, the left ideal condition, $v_nw\in I$ for $v\in V, w\in I, n\in\mathbb{Z}$ is equivalent to the right-ideal condition $w_nv\in I$ for $v\in V, w\in I, n\in\mathbb{Z}$.
\end{rem}

We define $Z(V)$, the center of $V$, to be the set $\{v\in V~|~Dv=0\}$.


\begin{lem}\ \

\begin{enumerate}
    \item Let $v\in V$. The following statements are equivalent:
    \begin{enumerate}
        \item $v\in Z(V)$;
    \item The vertex operator for $v$ is a constant, i.e. $Y(v,x)=v_{-1}$.
    \end{enumerate}
    \item $Z(V)$ is a unital commutative associative algebra with respect to the $_{-1}$-product $a*b=a_{-1}b$ for $a,b\in Z(V)$.
    \item For $v\in Z(V)$, we define the annihilator of $v$ as $Ann_V(v)=\{u\in V~|~v_{-1}u=0\}.$ Then $Ann_V(v)$ is an ideal of $V$.
\end{enumerate}
    
\end{lem}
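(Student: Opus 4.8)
The plan is to dispatch the three parts in turn, each time reducing to the structural identities already recorded in the excerpt: $D(v)=v_{-2}\mathbf{1}$, $Y(v,x)\mathbf{1}=e^{xD}v$, $[D,v_n]=(Dv)_n=-nv_{n-1}$, the skew-symmetry $Y(u,x)v=e^{xD}Y(v,-x)u$, together with the standard consequences of the Jacobi identity (the commutator and iterate formulas). For (1), if $v\in Z(V)$ then $[D,v_n]=-nv_{n-1}=(Dv)_n=0$ for every $n\in\mathbb{Z}$, so $v_m=0$ whenever $m\neq-1$ and hence $Y(v,x)=\sum_n v_nx^{-n-1}=v_{-1}$ is constant; conversely, if $Y(v,x)=v_{-1}$ then $v_n=0$ for all $n\neq-1$, so in particular $D(v)=v_{-2}\mathbf{1}=0$ and $v\in Z(V)$.

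For (2), I would first check closure: from $[D,a_n]=(Da)_n$ one gets $D(a_{-1}b)=(Da)_{-1}b+a_{-1}(Db)$, which vanishes when $a,b\in Z(V)$, so $a*b\in Z(V)$. Unitality: $D\mathbf{1}=\mathbf{1}_{-2}\mathbf{1}=0$ by $Y(\mathbf{1},x)=1_V$, so $\mathbf{1}\in Z(V)$, and the creation axiom gives $a_{-1}\mathbf{1}=a$ while $\mathbf{1}_{-1}a=a$, so $\mathbf{1}$ is a two-sided identity for $*$. Commutativity: for $a,b\in Z(V)$ part (1) makes $Y(a,x)$ and $Y(b,-x)$ constant, so $Y(a,x)b=e^{xD}Y(b,-x)a$ becomes $a_{-1}b=e^{xD}(b_{-1}a)=b_{-1}a$, the last step because $b_{-1}a\in Z(V)$ is annihilated by $D$. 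Associativity: the iterate formula specialized to $m=-1$ reads $(a_{-1}b)_n=\sum_{i\ge 0}\bigl(a_{-1-i}b_{n+i}+b_{n-1-i}a_i\bigr)$, and for $a\in Z(V)$ only the $i=0$ term of the first sum survives (the operators $a_i$ with $i\ge 0$ vanish), giving $(a_{-1}b)_n=a_{-1}b_n$ for all $n$; hence $(a_{-1}b)_{-1}=a_{-1}\circ b_{-1}$ as operators on $V$ whenever $a\in Z(V)$. Consequently $(a*b)*c=(a_{-1}b)_{-1}c=a_{-1}(b_{-1}c)=a*(b*c)$ for $a,b,c\in Z(V)$, and associativity of $*$ follows from associativity of operator composition.

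For (3), I would write $Ann_V(v)=\ker\bigl(v_{-1}\colon V\to V\bigr)$ and show it is a $D$-stable left ideal, after which the Remark in the excerpt (a $D$-stable left ideal of a vertex algebra is a two-sided ideal) finishes the proof. It is $D$-stable because $[D,v_{-1}]=(Dv)_{-1}=0$, so $v_{-1}w=0$ forces $v_{-1}(Dw)=D(v_{-1}w)=0$. For the left-ideal property, taking $\Res_{x_0}$ in the Jacobi identity yields the commutator formula $[Y(v,x_1),Y(u,x_2)]=\Res_{x_0}\,x_2^{-1}\delta\!\bigl(\tfrac{x_1-x_0}{x_2}\bigr)Y(Y(v,x_0)u,x_2)$; when $v\in Z(V)$ the operator $Y(v,x_0)=v_{-1}$ is free of $x_0$, and $\Res_{x_0}x_2^{-1}\delta\!\bigl(\tfrac{x_1-x_0}{x_2}\bigr)=0$ because that delta expands in nonnegative powers of $x_0$. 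Hence $[v_{-1},u_n]=0$ for all $u\in V$ and $n\in\mathbb{Z}$, so for $w\in Ann_V(v)$ we get $v_{-1}(u_nw)=u_n(v_{-1}w)=0$, i.e. $u_nw\in Ann_V(v)$.

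The argument is essentially routine; the only points needing a little care are the reduction of $(a_{-1}b)_{-1}$ to the operator composition $a_{-1}\circ b_{-1}$ in (2), which is precisely where associativity of $*$ comes from, and the observation in (3) that once $Ann_V(v)$ is shown to be a $D$-stable left ideal one may invoke the Remark rather than verifying $w_nu\in Ann_V(v)$ directly (the latter can alternatively be obtained from skew-symmetry together with the fact that $v_{-1}$ commutes with $D$ and with every $u_n$).
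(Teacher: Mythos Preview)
Your proof is correct and follows essentially the same route as the paper. The paper simply refers to Dong--Mason (replacing $L(-1)$ by $D$) for parts (1) and (2), whereas you spell out the details; for part (3) the paper also shows that $Ann_V(v)$ is a $D$-stable left ideal (using the mode form of the commutator formula, i.e.\ $v_{-1}u_na=u_nv_{-1}a+\sum_{i\ge 0}(-1)^i(v_iu)_{-1+n-i}a=0$) and then invokes the same Remark you cite, so the only cosmetic difference is that you obtain $[v_{-1},u_n]=0$ via the formal-variable residue argument rather than the explicit mode sum.
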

\begin{proof} The proofs of the statements (1), and (2) are exactly the same as the proof of Lemma 2.1, Lemma 2.3 in \cite{DongMason2004}. We only need to replace $L(-1)$ in the proofs of these Lemmas by $D$. 

Now we will prove statement (3): $Ann_V(v)$ is an ideal of $V$. Let $a\in Ann_V(v)$ and $u\in V$, $n\in\mathbb{Z}$. Because
$v_{-1}u_na=u_nv_{-1}a+\sum_{i=0}^{\infty}(-1)^i(v_iu)_{-1+n-i}a=0$, and
    $v_{-1}D(a)=Dv_{-1}a-(Dv)_{-1}a=0$,
 we can conclude that $Ann_V(v)$ is an ideal of $V$.
\end{proof}
\begin{cor}
    Let $e\in Z(V)$. Then ${[e_{-1},v_n]=0}$ for all $v\in V$, $n\in\mathbb{Z}$. In particular, we have $e_{-1}v=v_{-1}e$ for all $v\in V$.
\end{cor}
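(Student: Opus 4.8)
The plan is to deduce everything from the commutator formula for vertex algebras together with part (1) of the preceding Lemma. Recall that applying $\Res_{x_0}$ to the Jacobi identity yields, for all $u,v\in V$ and $m,n\in\Z$, the identity
\[
[u_m,v_n]=\sum_{i\geq 0}\binom{m}{i}(u_iv)_{m+n-i},
\]
which is a finite sum by the truncation axiom. (If one prefers, this step can be replaced by the observation that a constant vertex operator is mutually local with, and in fact commutes strictly with, every $Y(v,x)$.)

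First I would apply this identity with $u=e$ and $m=-1$, obtaining $[e_{-1},v_n]=\sum_{i\geq 0}\binom{-1}{i}(e_iv)_{-1+n-i}$. Since $e\in Z(V)$, part (1) of the Lemma gives $Y(e,x)=e_{-1}$, so $e_i=0$ for every $i\neq -1$; in particular $e_i=0$ for all $i\geq 0$. Hence every summand on the right-hand side vanishes, and therefore $[e_{-1},v_n]=0$ for all $v\in V$ and $n\in\Z$.

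For the final assertion, I would specialize to $n=-1$ and evaluate the relation $[e_{-1},v_{-1}]=0$ on the vacuum: because $v_{-1}\vac=v$ and $e_{-1}\vac=e$ (the latter since $Y(e,x)\vac=e^{xD}e=e$ as $D(e)=0$), we get $e_{-1}v=e_{-1}v_{-1}\vac=v_{-1}e_{-1}\vac=v_{-1}e$.

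There is no substantive obstacle here; the only points requiring (minor) care are justifying the commutator formula as a consequence of the Jacobi identity and noting that the sum involved is finite, so that the term-by-term vanishing argument is valid. The ``in particular'' statement is then an immediate specialization, and this corollary is essentially a direct reading of Lemma~(1) through the commutator formula.
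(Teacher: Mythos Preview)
Your proof is correct and essentially identical to the paper's own argument: both apply the commutator formula with $u=e$, $m=-1$, use $e_i=0$ for $i\geq 0$ (from $Y(e,x)=e_{-1}$), and then specialize to $n=-1$ evaluated on $\vac$ to obtain $e_{-1}v=v_{-1}e$. Your write-up is slightly more careful (you state the correct binomial coefficient $\binom{-1}{i}$ and justify $e_{-1}\vac=e$), but there is no difference in approach.
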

\begin{proof}
    Since $e_m=0$ for all $m\neq -1$, $n\in\mathbb{Z}$, we have $$[e_{-1},v_n]=\sum_{i=0}^{\infty}\binom{n}{i}(e_iv)_{-1+n-i}=0.$$ This implies that 
    $e_{-1}v=e_{-1}v_{-1}{\bf 1}=v_{-1}e_{-1}{\bf 1}=v_{-1}e$.
\end{proof}

\begin{prop}\label{Z(V)}\cite{LepowskyLi2004} 
Let $(V,Y,{\bf 1})$ be a vertex algebra. Let $v\in V$ such that $D(v)=0$. Then  $Y(u,x)v=e^{xD}u_{-1}v$ for $u\in V.$  
\end{prop}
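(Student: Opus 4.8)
The statement to prove is Proposition \ref{Z(V)}: if $D(v) = 0$, then $Y(u,x)v = e^{xD}u_{-1}v$ for all $u \in V$.

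The plan is to use the skew-symmetry property $Y(u,x)v = e^{xD}Y(v,-x)u$ (stated as a Proposition earlier in the excerpt) together with the characterization of $Z(V)$ from the Lemma, which says that $D(v) = 0$ if and only if $Y(v,x) = v_{-1}$ (a constant vertex operator, i.e.\ $v_n = 0$ for $n \neq -1$).

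Here are the steps I would carry out. First, since $D(v) = 0$, by part (1) of the Lemma the vertex operator $Y(v,x)$ is the constant operator $v_{-1}$; that is, $Y(v,-x)u = v_{-1}u$ with no dependence on $x$. Second, apply the skew-symmetry formula: $Y(u,x)v = e^{xD}Y(v,-x)u = e^{xD}(v_{-1}u)$. Third, invoke the Corollary (or equivalently the commutativity statement $e_{-1}w = w_{-1}e$ for $e \in Z(V)$) to rewrite $v_{-1}u = u_{-1}v$. Combining these gives $Y(u,x)v = e^{xD}u_{-1}v$, as desired. Each of these is essentially a one-line invocation of a previously established fact, so the argument is short.

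The only mild obstacle is making sure the skew-symmetry identity is being applied in the correct direction and with the correct sign convention — one must start from $Y(u,x)v$ and expand it, rather than the other way around — and confirming that $Y(v,-x)u$ really is just $v_{-1}u$ because substituting $-x$ into a constant ($x$-independent) series changes nothing. There is no genuine analytic or combinatorial difficulty here; the content is entirely in correctly chaining skew-symmetry with the constancy of $Y(v,x)$ for central $v$ and the commutativity $v_{-1}u = u_{-1}v$. An alternative route, should one wish to avoid the Corollary, is to note that $e^{xD}v_{-1}u$ and $e^{xD}u_{-1}v$ must agree by comparing with the generating-function form $Y(u,x)v|_{x=0} = u_{-1}v$ together with the creation property and uniqueness, but invoking the Corollary is cleaner.
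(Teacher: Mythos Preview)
Your proof is correct. The paper does not supply its own proof of this proposition but simply cites \cite{LepowskyLi2004}; your argument via skew-symmetry $Y(u,x)v = e^{xD}Y(v,-x)u$, the constancy $Y(v,x)=v_{-1}$ for $v\in Z(V)$ from the Lemma, and the commutation $v_{-1}u=u_{-1}v$ from the Corollary is precisely the standard argument one finds in that reference, and there is no circularity since those earlier results are proved independently of this proposition.
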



We define $$\Hom_{V,D}(V,V):=\{f\in \End_{\mathbb{C}}(V,V)~|~fY(u,x)=Y(u,x)f,~fD=Df\text{ for all }u\in V\}.$$ Clearly, $\Hom_{V,D}(V,V)$ is a ring. Let $e\in Z(V)$. Let $f_e:V\rightarrow V$ be a linear map defined by $f_e(v)=e_{-1}v$. Notice that
\begin{equation}\label{imageoffe}
f_e(u_nv)=e_{-1}u_nv=u_ne_{-1}v=u_nf_e(v)\end{equation} for all $u,v\in V$, $n\in\mathbb{Z}$. In addition, $f_e(Dv)=e_{-1}Dv=De_{-1}v=Df_e(v)$ for all $v\in V$. Hence, $f_e\in \Hom_V(V,V)$.

Because of (\ref{imageoffe}), and $Df_e(v)=De_{-1}v=e_{-1}D(v)+e_{-2}v=e_{-1}D(v)$, the space $f_e(V)$ is an ideal of $V$. Similarly, since ${\bf 1}-e\in Z(V)$, we can conclude that $f_{{\bf 1}-e}(V)$ is an ideal of $V$.

Now, let $\varphi\in Hom_{V,D}(V,V)$. Then $\varphi({\bf 1})\in Z(V)$ because $D(\varphi({\bf 1}))=\varphi(D({\bf 1}))=0$. Since $f_{\varphi({\bf 1})}(v)=\varphi({\bf 1})_{-1}v=v_{-1}\varphi({\bf 1})=\varphi(v_{-1}{\bf 1})=\varphi(v)$ for all $v\in V$, we can conclude that $\varphi=f_{\varphi({\bf 1})}$.
\begin{prop}\label{Hom(VV)} Let $V$ be a vertex algebra $V$. Let $f:Z(V)\rightarrow \Hom_{V,D}(V,V)$ be a linear map defined by $f(e)=f_e$. Then the linear map $f$ is a ring isomorphism.
\end{prop}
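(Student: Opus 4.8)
The plan is to show that $f\colon Z(V)\to \Hom_{V,D}(V,V)$, $e\mapsto f_e$, is a bijective ring homomorphism, since almost all the required pieces have already been assembled in the discussion preceding the statement. First I would record that $f$ is well defined and lands in $\Hom_{V,D}(V,V)$: this is precisely the computation $f_e(u_nv)=u_nf_e(v)$ in (\ref{imageoffe}) together with $f_e D = D f_e$, both already verified. Linearity of $f$ in $e$ is immediate from linearity of $v\mapsto e_{-1}v$ in $e$.

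Next I would check that $f$ is a ring homomorphism. On the target, multiplication is composition of operators; on the source, the product is $a*b = a_{-1}b$ from part (2) of the Lemma. So I must show $f_{a*b} = f_a\circ f_b$ and $f_{\vac} = \mathrm{id}_V$. The identity claim is clear since $\vac_{-1}=1_V$. For the product, using the Corollary (which gives $e_{-1}v = v_{-1}e$ and $[e_{-1},v_n]=0$ for $e\in Z(V)$), I compute for $v\in V$: $f_a(f_b(v)) = a_{-1}(b_{-1}v) = a_{-1}(v_{-1}b) = v_{-1}(a_{-1}b)$, where the last step uses that $a_{-1}$ commutes with $v_{-1}$ since $a\in Z(V)$. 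But $a_{-1}b = a*b \in Z(V)$ as well (part (2)), so $v_{-1}(a*b) = (a*b)_{-1}v = f_{a*b}(v)$, again by the Corollary applied to $a*b$. Hence $f$ is a ring homomorphism.

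Finally, bijectivity. Injectivity: if $f_e = 0$ then $e = \vac_{-1}e = e_{-1}\vac = f_e(\vac) = 0$. Surjectivity is exactly the paragraph just before the Proposition: given $\varphi\in\Hom_{V,D}(V,V)$, the element $\varphi(\vac)$ lies in $Z(V)$ because $D\varphi(\vac)=\varphi(D\vac)=0$, and then $f_{\varphi(\vac)}(v) = \varphi(\vac)_{-1}v = v_{-1}\varphi(\vac) = \varphi(v_{-1}\vac) = \varphi(v)$ for all $v$, using that $\varphi$ commutes with every $v_{-1}$ and $v_{-1}\vac = v$. So $\varphi = f_{\varphi(\vac)} = f(\varphi(\vac))$, proving $f$ is onto.

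I do not anticipate a genuine obstacle here: the statement is essentially a bookkeeping consolidation of the facts established in the preceding lemma, corollary, and the inline computations, so the only care needed is to invoke the Corollary in the right places to commute $a_{-1}$ past $v_{-1}$ and to rewrite $e_{-1}v$ as $v_{-1}e$. If anything requires a second look it is verifying that the product on $Z(V)$ used to define "ring homomorphism" is the $*$-product from part (2) and that $a*b$ again lies in $Z(V)$ so that the Corollary may be reapplied to it — but both are already contained in part (2) of the Lemma.
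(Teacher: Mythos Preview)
Your proof is correct and follows essentially the same outline as the paper's: well-definedness and surjectivity are taken from the preceding discussion, injectivity is checked by evaluating at $\vac$, and multiplicativity is verified directly. The only cosmetic difference is that for multiplicativity the paper applies the iterate formula to $(a_{-1}a')_{-1}v$ (using $a_i=a'_i=0$ for $i\neq -1$) rather than invoking the Corollary to commute $a_{-1}$ past $v_{-1}$, but this amounts to the same underlying fact.
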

\begin{proof} Let $f:Z(V)\rightarrow \Hom_{V,D}(V,V)$ be a linear map defined by $f(e)=f_e$. Clearly, $f$ is onto. Now, we will show that $f$ is one-to-one. Assume that $f(a)=0$. Hence $a_{-1}v=f_a(v)=0$ for all $v\in V$. In particular, $a=a_{-1}{\bf 1}=0$. Therefore, $Ker(f)=\{0\}$.

Next,let $a,a'\in Z(V)$. We have
\begin{eqnarray*}
   f_{a_{-1}a'}(v)&=&(a_{-1}a')_{-1}v\\
   &=&\sum_{i=0}^{\infty}a_{-1-i}a'_{-1+i}v+a'_{-2-i}a_iv\\
   &=&a_{-1}a'_{-1}v\\
   &=&f_a(a'_{-1}v)\\
   &=&f_a(f_{a'}(v))\\
   &=&f_a\circ f_{a'}(v)\text{ for all }v\in V.
\end{eqnarray*} So, $f$ is a ring isomorphism. \end{proof}

Recall that a commutative ring $R$ is local if it has a unique maximal ideal. Moreover, if the ring $R$ is local then it contains no idempotents except $0$ and $1_R$. Now, we will investigate relationships between indecomposable properties of vertex algebras $V$ and local properties of $Z(V)$.

\begin{lem}\label{indecomposableidempotent}
    Let $V$ be a vertex algebra. Then the following equivalent
    \begin{enumerate}
        \item $V$ is indecomposable.
        \item The only idempotents of $Z(V)$ are $ 0, {\bf 1} $.
    \end{enumerate}
In addition if $Z(V)$ is local then $V$ is indecomposable.
\end{lem}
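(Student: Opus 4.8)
The plan is to establish the equivalence (1)$\Leftrightarrow$(2) by relating direct-sum decompositions of $V$ as a vertex algebra to idempotents in the center, using the machinery already set up with $\Hom_{V,D}(V,V)$ and Proposition \ref{Hom(VV)}. For the direction (1)$\Rightarrow$(2): suppose $e\in Z(V)$ is an idempotent, i.e. $e_{-1}e = e$. Then ${\bf 1}-e$ is also an idempotent, and by the discussion preceding Proposition \ref{Hom(VV)} both $f_e(V) = e_{-1}V$ and $f_{{\bf 1}-e}(V) = ({\bf 1}-e)_{-1}V$ are ideals of $V$. I would check that $V = f_e(V) \oplus f_{{\bf 1}-e}(V)$ as a vertex algebra: every $v$ decomposes as $v = e_{-1}v + ({\bf 1}-e)_{-1}v$ (since $e_{-1}{\bf 1} + ({\bf 1}-e)_{-1}{\bf 1} = {\bf 1}$ acts as the identity), and the intersection is zero because if $w = e_{-1}u = ({\bf 1}-e)_{-1}u'$ then $w = e_{-1}(e_{-1}u) = e_{-1}({\bf 1}-e)_{-1}u' = (e_{-1}({\bf 1}-e))_{-1}u' = 0$, using the idempotent relation and the fact that $f_e\circ f_{{\bf 1}-e} = f_{e_{-1}({\bf 1}-e)}$ from Proposition \ref{Hom(VV)}. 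Indecomposability of $V$ then forces one summand to be zero, hence $e=0$ or $e={\bf 1}$.

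For (2)$\Rightarrow$(1): suppose $V = I \oplus J$ for nonzero ideals $I, J$. Write ${\bf 1} = e + e'$ with $e\in I$, $e'\in J$. The key point is that $e, e' \in Z(V)$ and $e$ is an idempotent. To see $e\in Z(V)$: $D{\bf 1} = 0$ gives $De + De' = 0$; since $I, J$ are $D$-stable (being ideals — one should note $D(v) = v_{-2}{\bf 1}$ lies in the ideal) and their sum is direct, $De = De' = 0$. To see $e$ is idempotent: for any $v\in V$, $v = v_{-1}{\bf 1} = v_{-1}e + v_{-1}e'$, and by Proposition \ref{Z(V)} (or the Corollary) $v_{-1}e = e_{-1}v \in I$ while $v_{-1}e' = e'_{-1}v\in J$; taking $v = e$ gives $e = e_{-1}e + e_{-1}e'$ with $e_{-1}e\in I$, $e_{-1}e'\in J$, so uniqueness of the decomposition yields $e_{-1}e = e$. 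Since $e$ is a nontrivial idempotent (it is nonzero because $e_{-1}$ acts as the identity on the nonzero ideal $I$, and likewise $e\neq{\bf 1}$), this contradicts (2).

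Finally, the last sentence: if $Z(V)$ is local, then $Z(V)$ has no idempotents besides $0$ and $1_{Z(V)} = {\bf 1}$ (a standard fact recalled just before the lemma), so condition (2) holds and hence $V$ is indecomposable.

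The main obstacle I anticipate is the bookkeeping in (2)$\Rightarrow$(1): one must carefully justify that the ideals $I$ and $J$ are closed under $D$ and under the $_{-1}$-action, and that the component maps $v\mapsto e_{-1}v$ really land in $I$ — this is where Proposition \ref{Z(V)} and the Corollary (giving $e_{-1}v = v_{-1}e$) are essential, since $e_{-1}v\in I$ is not immediate from $e\in I$ alone but follows once we know $e\in Z(V)$ and rewrite $e_{-1}v = v_{-1}e$, which is a right-ideal element of $I$. Everything else is a routine consequence of the ring isomorphism $Z(V)\cong \Hom_{V,D}(V,V)$ and the delta-function/iterate calculations already performed in the excerpt.
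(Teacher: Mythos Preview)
Your argument is correct. The direction ``nontrivial idempotent $\Rightarrow$ decomposable'' matches the paper's proof essentially verbatim. For the converse, however, your route differs from the paper's: you decompose ${\bf 1}=e+e'$ along $V=I\oplus J$, argue directly that $De=De'=0$ via $D$-stability of ideals, and then read off $e_{-1}e=e$ from the uniqueness of the direct-sum decomposition. The paper instead works on the endomorphism side: it forms the projections $e^j=\iota_j\circ p_j\in\Hom_{V,D}(V,V)$, observes these are orthogonal idempotents summing to $\mathrm{id}_V$, and then invokes the ring isomorphism $Z(V)\cong\Hom_{V,D}(V,V)$ of Proposition~\ref{Hom(VV)} to pull them back to $Z(V)$. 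The two arguments are really the same computation in different clothing---your $e$ is precisely $e^1({\bf 1})$---but your version is a touch more elementary (it avoids citing Proposition~\ref{Hom(VV)} for this direction), while the paper's makes transparent the general module-theoretic principle that direct-sum decompositions correspond to idempotents in the endomorphism ring.
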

\begin{proof} First, let us assume that $Z(V)$ contains an idempotent $e$ such that $e\neq 0, {\bf 1} $. Clearly, $f_{e}(V)+f_{{\bf 1}-e}(V)\subset V $. Observe that for $v\in V$, we have $v=e_{-1}v+({\bf 1}-e)_{-1}v\in f_{e}(V)+f_{{\bf 1}-e}(V).$ Hence, $V=f_{e}(V)+f_{{\bf 1}-e}(V)$. Let $u\in f_{e}(V)\cap f_{{\bf 1}-e}(V)$. Then 
$u=e_{-1}v^1=({\bf 1}-e)_{-1}v^2$
for some $v^1,v^2\in V$. 
Using the fact that ${\bf 1}-e$, $e$ are idempotents in $Z(V)$, and iterate formula, we have $0=(({\bf 1}-e)_{-1}e)_{-1}v^1=({\bf 1}-e)_{-1}e_{-1}v^1=({\bf 1}-e)_{-1}v^2=u$. Therefore, $V=f_{e}(V)\oplus f_{{\bf 1}-e}(V)$ and $V$ is decomposable.

Next, we assume that $V$ is decomposable. Hence, there exist ideals $V^1 $ and $V^2$ such that $V=V^1\oplus V^2$. Let $\iota_i:V^i\rightarrow V$ and $p_j:V\rightarrow V^j$ be module homomorphisms. Now, we set $e^j=\iota_j\circ p_j$. Observe that 
$e^i\circ e^i=(\iota_i\circ p_i)\circ (\iota_i\circ p_i)=e^i$ and $e^i\circ e^j=(\iota_i\circ p_i)\circ (\iota_j\circ p_j)=0$ when $i\neq j$. In addition, $e_1+e_2=id_V$ where $id_V$ is the identity map. Therefore, $e_1,e_2$ are idempotents in $\Hom_{V,D}(V,V)$ such that $e_i\neq 0,~id_V$ for $i\in\{1,2\}$. Since $Z(V)$ and $\Hom_{V,D}(V,V)$ are isomorphic as rings, we can conclude that $Z(V)$ contains an idempotent $e$ such that $e\neq 0,{\bf 1}$. Therefore, $V$ is indecomposable if and only if $Z(V)$ contains no idempotent $\neq 0,{\bf 1}$. The rest is clear.
\end{proof} 

Recall that a finite-dimensional non-associative unital algebra $\mathcal{A}$ is called power associative if the subalgebra generated by any element in $\mathcal{A}$ is associative. 
\begin{prop}\label{powerassociative}\cite{DongMason2004} Let $\mathcal{A}$ be a finite-dimensional unital power associative algebra. If the identity $1_{\mathcal{A}}$ is the only nonzero idempotent in $\mathcal{A}$, then $\mathcal{A}$ is local.  
\end{prop}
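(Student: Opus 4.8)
The plan is to reduce the problem to the classical structure theory of finite-dimensional power associative algebras, by showing that the nilradical (the maximal nil ideal, which exists and is well-defined for a finite-dimensional power associative algebra) is exactly the unique maximal ideal, and that the quotient is a field—in fact $\mathbb{C}$. First I would recall that for a finite-dimensional power associative algebra $\mathcal{A}$ over $\mathbb{C}$ the nil radical $N$ is a nilpotent ideal and the quotient $\mathcal{A}/N$ is semisimple; since we are over an algebraically closed field and $\mathcal{A}/N$ is a semisimple power associative algebra with identity, each simple summand is a matrix-type algebra which, being power associative and simple, must be a field, hence $\mathbb{C}$. Thus $\mathcal{A}/N \cong \mathbb{C}^k$ as algebras for some $k$.

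Next I would use the idempotent hypothesis. If $k \geq 2$, then $\mathcal{A}/N \cong \mathbb{C}^k$ contains a nontrivial idempotent $\bar{e}$ (e.g. $(1,0,\dots,0)$). The key step is to lift $\bar{e}$ to an idempotent of $\mathcal{A}$: this is where power associativity is essential, since within the associative commutative subalgebra $\mathbb{C}[e_0]$ generated by any preimage $e_0$ of $\bar e$, one can perform the standard idempotent-lifting argument modulo the nilpotent ideal $N$ (replace $e_0$ by a polynomial in $e_0$ to kill successive powers of the nilpotent error term $e_0^2 - e_0 \in N$, which terminates since $N$ is nilpotent). This produces an idempotent $e \in \mathcal{A}$ with $e \equiv \bar e \pmod N$, so $e \neq 0$ and $e \neq 1_{\mathcal{A}}$ (since $\bar e \neq 0, \bar 1$). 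This contradicts the hypothesis that $1_{\mathcal{A}}$ is the only nonzero idempotent. Hence $k = 1$, i.e. $\mathcal{A}/N \cong \mathbb{C}$, so $N$ is a maximal ideal.

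Finally I would argue uniqueness of the maximal ideal: since $N$ is nil and $\mathcal{A}/N \cong \mathbb{C}$, every element of $\mathcal{A}$ is either in $N$ or is a unit (it maps to a nonzero scalar modulo $N$, and one checks the lift is invertible because the error lies in the nilpotent $N$—here again power associativity lets us invert within $\mathbb{C}[a]$ via a geometric-type series that terminates). Therefore the non-units are precisely $N$, so $\mathcal{A}$ is local with unique maximal ideal $N$.

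I expect the main obstacle to be making the idempotent-lifting and unit-lifting steps rigorous in the merely power associative (non-associative) setting: one must be careful that all the polynomial manipulations take place inside the associative subalgebra generated by a single element, and that the relevant "error terms" genuinely lie in a nilpotent ideal so the inductive process halts. Once it is granted that $\mathcal{A}/N \cong \mathbb{C}^k$ and that single-generated subalgebras are associative, the rest is the standard commutative-algebra argument.
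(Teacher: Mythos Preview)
The paper does not prove this proposition; it is quoted from \cite{DongMason2004} and invoked as a black box in Theorem~\ref{VZ(V)}. There is thus no in-paper argument to compare against, but your route departs from the one in \cite{DongMason2004} and contains a genuine gap.

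The claim that each simple summand of the semisimple quotient $\mathcal{A}/N$ ``must be a field, hence $\mathbb{C}$'' is false at the stated generality: $M_n(\mathbb{C})$ for $n\ge 2$ is simple and associative (hence power associative) but not a field, and simple Jordan algebras give commutative power associative counterexamples of dimension $>1$. So you cannot deduce $\mathcal{A}/N\cong\mathbb{C}^k$ from semisimplicity alone, and without that isomorphism the nontrivial idempotent you want to lift has not been produced.

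The argument in \cite{DongMason2004} sidesteps global structure theory and goes straight to what you yourself call ``the key step.'' For each $a\in\mathcal{A}$, power associativity makes $\mathbb{C}[a]$ a finite-dimensional commutative associative $\mathbb{C}$-algebra; it inherits the hypothesis on idempotents, and an Artinian commutative ring with no nontrivial idempotents is already local with residue field $\mathbb{C}$. Hence every $a\in\mathcal{A}$ is either nilpotent or a unit, and one concludes from there. Your lifting computation inside $\mathbb{C}[e_0]$ is correct in isolation, but the surrounding scaffolding---nil radical, semisimple quotient, classification of the simple pieces---is unnecessary and, as written, not valid for arbitrary power associative algebras. (For the actual application in this paper only the commutative associative algebra $\mathcal{A}=Z(V)$ is needed, where your structure claims do hold; but even then the detour through $N$ and $\mathcal{A}/N$ is longer than the direct $\mathbb{C}[a]$ argument.)
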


\begin{thm}\label{VZ(V)} Let $V$ be a vertex algebra such that $\dim Z(V)<\infty$. Then the following are equivalent
\begin{enumerate}
    \item $V$ is indecomposable.
    \item The only idempotents of $Z(V)$ are $0$ and ${\bf 1}$.
    \item $Z(V)$ is a local algebra. 
\end{enumerate}
\end{thm}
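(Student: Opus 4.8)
The plan is to prove the chain of equivalences $(1)\Leftrightarrow(2)\Leftrightarrow(3)$ by assembling the pieces already established. The equivalence $(1)\Leftrightarrow(2)$ is immediate from Lemma~\ref{indecomposableidempotent}, which holds for any vertex algebra with no finiteness hypothesis; so the only real content is to upgrade the implication ``$(2)\Rightarrow(3)$'' from the last sentence of that lemma (which only gives the converse, $Z(V)\text{ local}\Rightarrow V$ indecomposable, i.e. $(3)\Rightarrow(1)$). To close the loop I would show $(2)\Rightarrow(3)$: that is, if the only idempotents of $Z(V)$ are $0$ and ${\bf 1}$, then $Z(V)$ is local.

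The key step is to invoke Proposition~\ref{powerassociative}. For that I need two things about $Z(V)$: it is finite-dimensional, which is the standing hypothesis $\dim Z(V)<\infty$; and it is a finite-dimensional unital power associative algebra. But by Lemma (the unlabeled one, part (2)), $Z(V)$ is in fact a \emph{commutative associative} unital algebra under $a*b = a_{-1}b$, so a fortiori it is power associative. Thus Proposition~\ref{powerassociative} applies verbatim: since $1_{Z(V)} = {\bf 1}$ is the only nonzero idempotent, $Z(V)$ is local. This gives $(2)\Rightarrow(3)$. Combined with $(3)\Rightarrow(1)$ from the end of Lemma~\ref{indecomposableidempotent} and $(1)\Leftrightarrow(2)$ from the same lemma, the three statements are equivalent.

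Concretely, the write-up would go: by Lemma~\ref{indecomposableidempotent}, $(1)\Leftrightarrow(2)$, and $(3)\Rightarrow(1)$ follows from its final assertion (a local ring has no idempotents other than $0$ and $1$, so in particular $V$ is indecomposable — actually one needs $(3)\Rightarrow(2)$ here, also immediate since local commutative rings have only trivial idempotents, then $(2)\Rightarrow(1)$). It remains to prove $(2)\Rightarrow(3)$. Assume $0$ and ${\bf 1}$ are the only idempotents of $Z(V)$. Since $\dim Z(V)<\infty$ and $Z(V)$ is a unital commutative associative (hence power associative) algebra by the earlier Lemma, Proposition~\ref{powerassociative} shows $Z(V)$ is local. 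This completes the cycle.

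I do not anticipate a serious obstacle here — everything needed has been set up. The one point requiring a little care is bookkeeping the direction of each implication so that the cycle genuinely closes (in particular noting that ``local commutative ring $\Rightarrow$ only trivial idempotents'' is the elementary fact quoted just before Lemma~\ref{indecomposableidempotent}, giving $(3)\Rightarrow(2)$ cleanly, rather than trying to route through indecomposability). A secondary subtlety worth a sentence: Proposition~\ref{powerassociative} is stated for possibly non-associative algebras, and one should remark that associativity of $Z(V)$ makes its hypothesis trivially satisfied, so no appeal to a more delicate structure theory is needed.
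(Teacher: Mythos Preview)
Your proposal is correct and follows essentially the same approach as the paper, which simply states that the result follows immediately from Lemma~\ref{indecomposableidempotent} and Proposition~\ref{powerassociative}. Your write-up spells out the bookkeeping (which implication comes from which source, and why $Z(V)$ satisfies the hypotheses of Proposition~\ref{powerassociative}) more carefully than the paper does, but the logical content is identical.
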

\begin{proof}
    It follows immediately from Lemma \ref{indecomposableidempotent} and Proposition \ref{powerassociative}.
\end{proof}

\begin{dfn}\cite{FrenkelHuangLepowsky} A {\em quasi} vertex operator algebra is a vertex algebra $V=\bigoplus_{n\in\mathbb{Z}}V_n$ that satisfies the following conditions 
\begin{itemize}
    \item for $n\in\mathbb{Z}$, $\dim V_n<\infty$;
    \item $V_n=0$ for $n$ sufficiently small; 
    \item there is a representation $\rho$ of $ \mathfrak{sl}(2)$ on $V$ given by: $L(j)=\rho(L_j)$, $j\in\{0,\pm 1\}$, where $\{L_{-1},L_0,L_1\}$ is a basis of $\mathfrak{sl}(2)$ with the Lie brackets
$$[L_0,L_{-1}]=L_{-1},~[L_0,L_1]=-L_1\text{ and }[L_{-1}, L_1]=-2L_0,$$ 
and the following conditions hold for $v\in V$ and $j\in\{0,\pm 1\}$:
\begin{eqnarray*}
    [L(j),Y(v,x)]&=&\sum_{k=0}^{j+1}\binom{j+1}{k}x^{j+1-k}Y(L(k-1)v,z),\\
    \frac{d}{dz}Y(v,x)&=&Y(L(-1)v,x),\text{ and }\\
    L(0)v&=&nv=(wt v)v\text{ for }n\in\mathbb{Z}\text{ and }v\in V_n.
\end{eqnarray*}
\end{itemize}
\end{dfn}
\begin{rem} For $v\in V$, $L(-1)v=v_{-2}{\bf 1}=D(v)$.
\end{rem}
The Virasoro algebra $Vir$ is a Lie algebra with a basis $\{L_n~|~n\in\mathbb{Z}\}\cup\{c\}$ where 
$$[L_m,L_n]=(m-n)L_{m+n}+\frac{1}{12}(m^3-m)\delta_{m+n,0}c$$ for $m,n\in\mathbb{Z}$ and $c$ is central. We denote by $L(n)$ the operator on any $Vir$-module corresponding to $L_n$ for $n\in\mathbb{Z}$.

We set $Vir^+=span\{L_m~|~m\geq -1\}$. This is a subalgebra of the Virasoro algebra $Vir$. 
\begin{dfn}\cite{Li2019} A vertex algebra $(V=\oplus_{n\in\mathbb{Z}}V_n,Y)$ is a {\em semiconformal-vertex algebra} if it is a quasi-vertex operator algebra that satisfies the following additional conditions
\begin{itemize}
    \item there is a representation $\rho$ of $Vir^+$ on $V$ given by $L(j)=\rho(L_j)$ for $j\geq -1$,
    \item 
    $[L(j),Y(v,x)]=\sum_{k=0}^{j+1}\binom{j+1}{k}x^{j+1-k}Y(L(k-1)v,x)$ for $j\geq -1$.
\end{itemize}    
\end{dfn}

\begin{prop}\label{ZVV0} Let $V=\bigoplus_{n\in\mathbb{Z}}V_n$ be a semiconformal- vertex algebra. For $v\in Z(V)$, we have $L(m)v=0$ for all $m\geq -1$. Moreover, $Z(V)\subseteq V_0$ consists of primary states.
\end{prop}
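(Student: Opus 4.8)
The plan is to exploit the commutation relation $[L(j),Y(v,x)]=\sum_{k=0}^{j+1}\binom{j+1}{k}x^{j+1-k}Y(L(k-1)v,x)$ for $j\geq -1$ together with the fact, established in the excerpt, that $v\in Z(V)$ forces $Y(v,x)=v_{-1}$ to be a constant vertex operator. First I would apply the commutation formula with a generic $j\geq -1$ to the element $v\in Z(V)$ and evaluate both sides on the vacuum $\mathbf 1$. Using $Y(u,x)\mathbf 1\in V[[x]]$ with constant term $u$, and using $L(k-1)\mathbf 1=0$ for $k-1\geq -1$ (which holds since $L(-1)\mathbf 1=D(\mathbf 1)=0$ and $L(m)\mathbf 1=0$ for $m\geq 0$ as these raise weight on a vector of weight $0$, or more directly $L(m)\mathbf 1\in V_{-m}=0$ for $m>0$), the right-hand side of the commutator applied to $\mathbf 1$ should collapse in a controlled way.

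More concretely: since $Y(v,x)=v_{-1}$ is constant in $x$, we get $[L(j),Y(v,x)]=[L(j),v_{-1}]=L(j)v_{-1}-v_{-1}L(j)$, which is also constant in $x$. On the other hand the right-hand side $\sum_{k=0}^{j+1}\binom{j+1}{k}x^{j+1-k}Y(L(k-1)v,x)$ is a genuine Laurent polynomial in $x$ unless cancellation forces it to be constant; comparing the coefficient of each power $x^{j+1-k}$ for $k<j+1$ must give $Y(L(k-1)v,x)=0$, hence $L(k-1)v=0$ for $k=0,\dots,j$, i.e. $L(m)v=0$ for $-1\le m\le j-1$. Letting $j$ range over all integers $\geq -1$ then yields $L(m)v=0$ for every $m\geq -1$. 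The cleanest way to run the coefficient comparison rigorously is to apply the operator identity to $\mathbf 1$: the left side gives $L(j)v_{-1}\mathbf 1 - v_{-1}L(j)\mathbf 1 = L(j)v$ (as $L(j)\mathbf 1=0$), while the right side gives $\sum_{k}\binom{j+1}{k}x^{j+1-k}Y(L(k-1)v,x)\mathbf 1$, whose lowest-order-in-$x$ behavior and $x$-dependence must match a constant; this pins down $L(k-1)v$ for all relevant $k$ by downward induction on $j$.

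Once $L(m)v=0$ for all $m\ge -1$ is in hand, the remaining assertions are quick. Since $L(0)v=0$ and $L(0)$ acts on $V_n$ as multiplication by $n$, the element $v$ lies in $V_0$, giving $Z(V)\subseteq V_0$; and $L(m)v=0$ for all $m\ge 1$ together with $L(0)v=0$ is precisely the statement that $v$ is a primary (lowest-weight) state for $Vir^+$. I expect the main obstacle to be purely bookkeeping: setting up the coefficient comparison (or the downward induction on $j$) so that one legitimately concludes $L(k-1)v=0$ term by term rather than merely that some combination vanishes — the trick of evaluating on $\mathbf 1$ and using $L(j)\mathbf 1=0$, $Y(L(k-1)v,x)\mathbf 1\in V[[x]]$ is what makes this clean, and I would want to double-check the base case $j=-1$ (which gives $L(-1)v=D(v)=0$, already known) and $j=0$ (which gives $L(-1)v=0$ again, consistent) before pushing to general $j$.
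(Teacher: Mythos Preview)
There is a genuine gap. Your claim that comparing powers of $x$ in
\[
[L(j),v_{-1}]=\sum_{k=0}^{j+1}\binom{j+1}{k}x^{j+1-k}Y(L(k-1)v,x)
\]
forces $Y(L(k-1)v,x)=0$ for $k<j+1$ is not correct: each $Y(L(k-1)v,x)$ is itself a Laurent series in $x$, so the terms for different $k$ contribute to overlapping powers and no single coefficient isolates a single $Y(L(k-1)v,x)$. Evaluating on $\mathbf{1}$ does give clean vector identities, but what one actually extracts (for instance after multiplying by $e^{-xL(-1)}$) is the recursion $L(j-1)v=-\tfrac{1}{j+1}L(-1)L(j)v$, not the vanishing of $L(k-1)v$ outright. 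These relations are consistent with $L(m)v=0$ but do not imply it by themselves.

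What is missing is the correct anchor for the induction. You treat $j=-1,0$ as base cases and plan to ``push to general $j$'', but the recursion only propagates vanishing \emph{downward}: if $L(j)v=0$ then $L(j-1)v=0$. The crucial observation---entirely absent from your proposal---is that for homogeneous $v\in V_n$ one has $L(j)v\in V_{n-j}$, and since $V$ is bounded below this forces $L(j)v=0$ for all sufficiently large $j$; the recursion then cascades down to give $L(m)v=0$ for every $m\ge -1$. This is precisely the argument the paper invokes (Lemma~2.2 of \cite{DongMason2004}), obtained there more directly from the $Vir^{+}$ bracket $[L(m),L(-1)]=(m+1)L(m-1)$ applied to $v$, without passing through $[L(j),Y(v,x)]$. (A smaller point: your justification that $L(m)\mathbf{1}\in V_{-m}=0$ assumes $V$ is $\mathbb{N}$-graded, whereas the proposition is stated for $\mathbb{Z}$-graded $V$; $L(m)\mathbf{1}=0$ can still be shown, but needs a separate short argument.)
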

\begin{proof}The proof of this proposition is exactly the same as the proof of Lemma 2.2 of \cite{DongMason2004}.\end{proof}
\begin{prop}\label{V_0localindecomposable}Let $V$ be a vertex algebra. If $V$ is local then $V$ is indecomposable.

\end{prop}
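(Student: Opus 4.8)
The plan is to prove the contrapositive: if $V$ is decomposable, then $V$ is not local. Recall that $V$ is \emph{local} means $V_0$ is a local commutative associative algebra (equivalently, by the discussion following Proposition~\ref{V_0localindecomposable}'s setup, that the relevant ring has a unique maximal ideal). So suppose $V = V^1 \oplus V^2$ as a direct sum of nonzero ideals. The key observation is that such a decomposition is controlled by a nontrivial idempotent: by the argument already carried out in Lemma~\ref{indecomposableidempotent}, the projections yield an idempotent $e \in Z(V)$ with $e \neq 0, {\bf 1}$. Since $Z(V) \subseteq V_0$ (this holds because $D = L(-1)$ lowers degree, so $Dv = 0$ forces $v \in V_0$; more directly, $Z(V)$ consists of the vacuum-like states and in an $\mathbb{N}$-graded setting the only thing killed by $D$ sits in degree $0$), the element $e$ is a nontrivial idempotent of the commutative associative algebra $V_0$.

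The second step is to conclude that $V_0$ cannot be local: a commutative ring with a nontrivial idempotent $e$ decomposes as $V_0 = e V_0 \oplus ({\bf 1} - e) V_0$, a direct product of two nonzero rings, hence has at least two maximal ideals (one containing $e$, one containing ${\bf 1}-e$). This is exactly the statement, recalled just before Lemma~\ref{indecomposableidempotent}, that a local ring contains no idempotents other than $0$ and $1_R$. Therefore $V_0$ is not local, i.e. $V$ is not local, completing the contrapositive.

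One caveat: the cleanest route assumes $V$ is $\mathbb{N}$-graded (or at least bounded below with $V_0$ playing the role in the definition of "local"), so that $Z(V) \subseteq V_0$; this matches the standing hypotheses of the section. If one wants the statement for a general vertex algebra where "local" is phrased in terms of $Z(V)$ rather than $V_0$, the argument is even shorter — the idempotent $e$ lands directly in $Z(V)$ and obstructs locality of $Z(V)$ there. I would state the proof in the form: "Suppose $V$ is decomposable. By Lemma~\ref{indecomposableidempotent}, $Z(V)$ contains an idempotent $e \neq 0, {\bf 1}$; since $e \in Z(V) \subseteq V_0$, $V_0$ contains a nontrivial idempotent, so $V_0$ is not local, and hence $V$ is not local."

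The main (and really only) obstacle is bookkeeping about what "local" refers to here — whether it is a property of $V$ stated via $V_0$ or via $Z(V)$ — and making sure the inclusion $Z(V) \subseteq V_0$ is invoked with the right justification given the grading conventions in force. There is no hard computation: the entire content is the standard fact that idempotents obstruct locality, together with the already-established dictionary between decompositions of $V$ and idempotents of $Z(V)$.
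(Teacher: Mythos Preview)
Your proof rests on a misreading of the definition of ``local'' for a vertex algebra. In this paper, ``$V$ is local'' means that $V$ has a unique maximal ideal \emph{as a vertex algebra}; it does \emph{not} mean that $V_0$ is a local algebra, nor that $Z(V)$ is local. Indeed, Theorem~\ref{Maintheoremforindecomposibility} lists ``$V$ is local'' and ``$V_0$ is a local algebra'' as separate conditions and proves their equivalence only under the extra hypotheses that $V$ is semiconformal and has countably many maximal ideals. Proposition~\ref{V_0localindecomposable}, by contrast, is stated for an arbitrary vertex algebra: there is no grading assumed, so $V_0$ need not even be defined, and the inclusion $Z(V)\subseteq V_0$ you invoke (Proposition~\ref{ZVV0}) is unavailable without the semiconformal structure.

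The paper's proof is the standard ideal-theoretic one and is entirely elementary: if $\mathcal{M}$ is the unique maximal ideal of $V$ and $V=M\oplus N$ with $M,N$ proper ideals, then $M\subseteq\mathcal{M}$ and $N\subseteq\mathcal{M}$, whence $V\subseteq\mathcal{M}$, a contradiction. No idempotents, no $Z(V)$, no $V_0$ are needed. Your detour through Lemma~\ref{indecomposableidempotent} does produce a nontrivial idempotent $e\in Z(V)$, and one could then observe that $f_e(V)$ and $f_{{\bf 1}-e}(V)$ are proper ideals summing to $V$---but at that point you are just reproducing the direct-sum decomposition you started with, and the contradiction with ``unique maximal ideal'' is exactly the paper's argument. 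The idempotent language adds nothing here.
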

\begin{proof} Assume that $V$ is a local vertex algebra with the unique maximal ideal $\mathcal{M}$. If $V$ is decomposable then there exist proper ideals $M$, $N$ such that $V=M\oplus N$. Since $M\subseteq \mathcal{M}$ and $N\subseteq \mathcal{M}$, it implies that $V\subseteq \mathcal{M}$. This is impossible.\end{proof}
\begin{prop}\label{V_0localindecomposable1} \cite{JY} Let $V=\bigoplus_{n=0}^{\infty}V_n$ be an $\mathbb{N}$-graded vertex algebra such that $\dim V_0\geq 2$. If $V_0$ is a local algebra then $V$ is indecomposable.
\end{prop}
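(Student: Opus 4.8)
The plan is to reduce the statement about $V$ to a statement about idempotents in $Z(V)$, using the machinery already established in this section, and then transfer the locality hypothesis from $V_0$ to $Z(V)$. Concretely, by Lemma \ref{indecomposableidempotent}, $V$ is indecomposable if and only if the only idempotents of $Z(V)$ are $0$ and $\mathbf{1}$. So it suffices to show that the hypothesis ``$V_0$ is a local algebra'' forces $Z(V)$ to have no nontrivial idempotents. Since $V$ is $\mathbb{N}$-graded, the operator $D$ raises weight by $1$, so $Z(V) = \ker D$ is a graded subspace; but an element of $Z(V)$ in positive weight $n$ would have to satisfy $Y(v,x) = v_{-1}$ (by Lemma part (1)), and then $v = \lim_{x\to 0} Y(v,x)\mathbf{1} = v_{-1}\mathbf{1}$ lies in $V_{n-1}$ by the grading of the $_{-1}$-product, forcing $v \in V_{n-1}\cap V_n = 0$ for $n\ge 1$ — wait, more carefully: $v_{-1}\mathbf{1}$ has weight equal to $\wt v$ since $\mathbf{1}\in V_0$ and $v_{-1}$ shifts weight by $\wt v - (-1) - 1 = \wt v$, so this does not immediately kill $v$; instead one uses $D v = v_{-2}\mathbf 1$ and the fact that $v\in V_n$ with $n\ge 1$ gives $v = \mathbf 1_{-1} v$, and more robustly one argues directly that $Z(V)\subseteq V_0$ because $Dv=0$ together with $Y(v,x)\mathbf 1 = e^{xD}v = v$ shows $v = v_{-1}\mathbf 1$ and for $v$ homogeneous of positive weight this is an element of $V_0$ only if... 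Rather than belabor this, the clean route is: $Z(V)\cap V_n$ for $n\ge 1$ consists of vectors $v$ with $Y(v,x)=v_{-1}$ constant, and restricting to $V_0$ one sees $v_{-1}$ maps $V_0\to V_n$; but also skew-symmetry $Y(u,x)v = e^{xD}Y(v,-x)u = Y(v,-x)u$ shows $u_{-1}v = v_{-1}u$ for all $u$, and taking $u=\mathbf 1$ gives $v = v_{-1}\mathbf 1$, and since $v_{-1}\mathbf 1 \in V_{\wt v}$ this is consistent; the actual constraint comes from the $\mathbb N$-grading bounding things below. I will instead simply invoke that $Z(V)$ is a subalgebra of the commutative associative algebra $(V_0, {}_{-1})$: indeed every idempotent $e\in Z(V)$ satisfies $e = e_{-1}e = e * e$ and lies in $V_0$ by the argument that $e_{-1}\mathbf 1 = e$ and the grading.

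The cleanest correct argument, which I would actually write: first show $Z(V)\subseteq V_0$. For homogeneous $v\in Z(V)$ of weight $n$, Lemma part (1) gives $Y(v,x) = v_{-1}$, and then $v = \lim_{x\to 0}Y(v,x)\mathbf 1 = v_{-1}\mathbf 1$; but for any $a\in V_0$, the vector $v_{-1}a$ has weight $n + 0 = n$, and in particular $v_{-1}\mathbf 1\in V_n$, which is consistent — the real point is that $V_0$ is a subalgebra and $v$ acts on $V_0$ by the constant operator $v_{-1}$, and by the corollary after the annihilator lemma, $v_{-1}a = a_{-1}v$ for all $a\in V$; taking $a\in V_1$ would put $v\in V_1$-image, but we want $v\in V_0$. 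This requires the $\mathbb N$-grading: $v_{-1}\mathbf 1 = v$ and $v_{-1}$ has weight $n$; applied to $\mathbf 1\in V_0$ gives something in $V_n$, fine, but the operator $Y(v,x)=v_{-1}$ being weight-homogeneous of degree $n$ and constant in $x$ means $v_n$-type components vanish, and since $V$ is bounded below at $0$, acting on $V_0$ we'd need $v_{-1}(V_0)\subseteq V_n$. This is all consistent for any $n$, so actually $Z(V)$ need NOT lie in $V_0$ in a general vertex algebra — but here is the resolution: an idempotent $e$ satisfies $e_{-1}e = e$, so $\wt e = 2\wt e$, hence $\wt e = 0$, so $e\in V_0$. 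That is the key elementary observation.

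So the steps are: (i) given a nonzero idempotent $e\in Z(V)$, write $e = \sum_n e^{(n)}$ in homogeneous components; since $e_{-1}e = e$ and $V_0$-multiplication is weight-additive, degree considerations show every homogeneous idempotent has weight $0$, and one reduces to $e\in V_0$ (using that the top/relevant component is idempotent); (ii) conclude $e$ is an idempotent of the commutative associative algebra $(V_0,{}_{-1}) = (V_0, \cdot)$; (iii) since $V_0$ is local, its only idempotents are $0$ and $\mathbf 1$, hence $e\in\{0,\mathbf 1\}$; (iv) by Lemma \ref{indecomposableidempotent}, $V$ is indecomposable. Alternatively, and perhaps more in the spirit of the section, one can bypass (i)--(ii) by noting that any idempotent of $Z(V)$ is in particular an idempotent of $V_0$ once one knows $Z(V)\subseteq V_0$, which for idempotents follows from the weight-doubling argument; then Proposition \ref{powerassociative} is not even needed since $V_0$ is associative. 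The main obstacle — really the only subtle point — is justifying $Z(V)\cap(\text{idempotents})\subseteq V_0$ rigorously: one must handle the possibility that an idempotent is a sum of homogeneous pieces in several degrees, and argue that $e_{-1}e=e$ together with the grading of the $_{-1}$-product forces all pieces into degree $0$. I expect this to be a short computation comparing lowest/highest weight components on both sides of $e_{-1}e = e$.
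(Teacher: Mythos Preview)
The paper does not supply a proof of this proposition; it is quoted from \cite{JY} and used as input for Theorem~\ref{indecomposablelocalV_0} and Theorem~\ref{Maintheoremforindecomposibility}. So there is nothing in the paper to compare your argument against directly.

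Your overall strategy is sound and would constitute a self-contained proof. Here is the clean version hiding in your draft. Since $D$ is homogeneous of degree $+1$ on an $\mathbb{N}$-graded vertex algebra, $Z(V)=\ker D$ is graded. Let $e\in Z(V)$ be an idempotent and write $e=\sum_{n\ge 0}e^{(n)}$ with $e^{(n)}\in V_n$. Comparing the degree-$0$ parts of $e_{-1}e=e$ gives $(e^{(0)})_{-1}e^{(0)}=e^{(0)}$, so $e^{(0)}$ is an idempotent in the commutative associative algebra $(V_0,\,{}_{-1})$. Since $V_0$ is local, $e^{(0)}\in\{0,\mathbf 1\}$. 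If $e^{(0)}=0$ and $e\neq 0$, let $n\ge 1$ be minimal with $e^{(n)}\neq 0$; the degree-$n$ component of $e_{-1}e$ is $\sum_{i+j=n,\ i,j\ge n}(e^{(i)})_{-1}e^{(j)}=0$, contradicting $e^{(n)}\neq 0$. Hence $e=0$. If $e^{(0)}=\mathbf 1$, apply the previous case to the idempotent $\mathbf 1 - e$ to get $e=\mathbf 1$. Now Lemma~\ref{indecomposableidempotent} finishes the proof.

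Two remarks on your write-up. First, your long detour trying to prove $Z(V)\subseteq V_0$ outright is unnecessary and, as you half-noticed, not obviously true for a bare $\mathbb N$-graded vertex algebra (that containment is established in the paper only under the semiconformal hypothesis, Proposition~\ref{ZVV0}); the grading argument on idempotents specifically is all you need. Second, in your step~(i) you say ``the top/relevant component is idempotent'': it is the \emph{bottom} (degree-$0$) component that is forced to be idempotent. A top-degree comparison only yields $(e^{(m)})_{-1}e^{(m)}=0$, which is not a contradiction by itself; the argument has to proceed from the bottom up, and the locality hypothesis on $V_0$ is genuinely used at that first step (to pin down $e^{(0)}$) rather than at the end.
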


The following Theorems generalizes Theorem 2 in \cite{DongMason2004} for the case when $V$ is an $\mathbb{N}$-graded vertex operator algebra to the case when $V$ is an $\mathbb{N}$-graded semiconformal-vertex algebra.

\begin{thm}\label{indecomposablelocalV_0} Let $V=\bigoplus_{n=0}^{\infty}V_n$ be a semiconformal-vertex algebra such that $\dim V_0\geq 2$. Then the following are equivalent 
\begin{enumerate}
    \item $V$ is indecomposable.
    \item $Z(V)$ is a local algebra.
    \item $V_0$ is a local algebra.
\end{enumerate} 
\end{thm}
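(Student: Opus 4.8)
The plan is to prove the three implications $(3)\Rightarrow(1)\Rightarrow(2)\Rightarrow(3)$, drawing on the machinery already assembled in this section. The implication $(3)\Rightarrow(1)$ is immediate from Proposition \ref{V_0localindecomposable1}: if $V_0$ is a local algebra then $V$ is indecomposable, with no semiconformal hypothesis needed. For $(1)\Rightarrow(2)$, I would argue contrapositively. Suppose $Z(V)$ is not local. Since $V$ is a semiconformal-vertex algebra, Proposition \ref{ZVV0} gives $Z(V)\subseteq V_0$; because $V$ is a quasi vertex operator algebra, $\dim V_0<\infty$, hence $\dim Z(V)<\infty$. Then Theorem \ref{VZ(V)} applies: $Z(V)$ non-local forces $Z(V)$ to contain an idempotent $e\neq 0,{\bf 1}$, and Lemma \ref{indecomposableidempotent} then yields a decomposition $V=f_e(V)\oplus f_{{\bf 1}-e}(V)$ into proper ideals, contradicting indecomposability. (Equivalently, apply Theorem \ref{VZ(V)} directly, which already records the equivalence of $(1)$, the idempotent condition, and locality of $Z(V)$ whenever $\dim Z(V)<\infty$; the role of the semiconformal hypothesis is precisely to guarantee $\dim Z(V)<\infty$ via $Z(V)\subseteq V_0$.)

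The substantive step is $(2)\Rightarrow(3)$: assuming $Z(V)$ is a local algebra, show $V_0$ is a local algebra. The key observation is that ${\bf 1}\in Z(V)\subseteq V_0$ and $Z(V)$ is a unital subalgebra of the finite-dimensional commutative associative algebra $(V_0,{}_{-1}\text{-product})$ (Lemma, part (2)). I would first show that every idempotent of $V_0$ already lies in $Z(V)$. Indeed, if $a\in V_0$ satisfies $a_{-1}a=a$, then since $V$ is an $\mathbb{N}$-graded vertex algebra and $a\in V_0$, one checks that $D(a)=a_{-2}{\bf 1}=L(-1)a$ and the grading forces $a_{-2}{\bf 1}\in V_{-1}=0$ — wait, more carefully: $a\in V_0$ gives $Da = a_{-2}{\bf 1}$, and $\wt(a_{-2}{\bf 1}) = 0 - (-2) - 1 = 1$, so $Da\in V_1$, not obviously zero. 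Instead I would use that $V_0$ is itself a finite-dimensional unital commutative (hence power) associative algebra, so by Proposition \ref{powerassociative} it suffices to show ${\bf 1}$ is the only nonzero idempotent of $V_0$. So suppose $a\in V_0$ is a nonzero idempotent; I want $a={\bf 1}$. The strategy is to produce from $a$ an idempotent of $Z(V)$ and use locality of $Z(V)$.

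To do this, consider $Da\in V_1$ and the operator $f_a\in\Hom_V(V,V)$ (note $a\in V_0$ need not be in $Z(V)$ a priori, so $f_a$ commutes with all $Y(u,x)$ but perhaps not with $D$). Here is where I expect the main obstacle: bridging from an idempotent of $V_0$ to an idempotent of $Z(V)$ requires controlling the $D$-action, and this is exactly the point where the semiconformal structure (the action of $Vir^+$, in particular $L(1)$) should enter. I would attempt the following: given a nonzero idempotent $a\in V_0$ with $a\neq{\bf 1}$, the element ${\bf 1}-a$ is also a nonzero idempotent and $a_{-1}({\bf 1}-a)=0$, so $V_0 = a_{-1}V_0 \oplus ({\bf 1}-a)_{-1}V_0$ as algebras, each a proper unital subalgebra. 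One then extends this to a decomposition of $V$: the point is that $a_{-1}$ acting on all of $V$ is the identity on the ideal generated by $a$ and commutes with every vertex operator, so $a_{-1}V$ and $({\bf 1}-a)_{-1}V$ are complementary subspaces; the remaining issue is whether they are $D$-stable, i.e. whether they are ideals. If $a\notin Z(V)$ this can fail, so instead I would show that locality of $Z(V)$ together with the semiconformal structure forces $a\in Z(V)$: by Proposition \ref{ZVV0} it is enough to show $L(m)a=0$ for $m\geq -1$, and for an idempotent $a=a_{-1}a$ one computes $L(1)a = L(1)(a_{-1}a) = (L(1)a)_{-1}a + a_{-1}(L(1)a) + (\text{lower terms from the commutator formula})$, setting up a fixed-point/eigenvalue equation for $L(1)a$ that, combined with the grading $L(1)a\in V_{-1}=0$, kills it; similarly $L(-1)a=Da\in V_1$ satisfies $Da = D(a_{-1}a) = (Da)_{-1}a + a_{-1}(Da)$, and feeding in the $L(1)$-annihilation plus $sl_2$ relations (via $[L(1),L(-1)]=2L(0)$, with $L(0)a=0$) forces $Da=0$. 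Hence $a\in Z(V)$, so $a\in\{0,{\bf 1}\}$ by locality of $Z(V)$, a contradiction, completing $(2)\Rightarrow(3)$ and the theorem. I expect the delicate bookkeeping in this last $sl_2$/$Vir^+$ computation — ensuring all the commutator terms in $[L(j),Y(a,x)]$ are handled and that the grading really forces the vanishing — to be where the real work lies.
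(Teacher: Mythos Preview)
Your overall strategy matches the paper's exactly: the paper invokes Theorem \ref{VZ(V)} for (1)$\Leftrightarrow$(2), Proposition \ref{V_0localindecomposable1} for (3)$\Rightarrow$(1), and for (2)$\Rightarrow$(3) shows that every idempotent of $V_0$ already lies in $Z(V)$ (citing the proof of Proposition~4.3 of \cite{DongMason2004}), so that locality of $Z(V)$ forces $V_0$ to have no nontrivial idempotents and hence, by Proposition \ref{powerassociative}, to be local. You also correctly isolate the one place the semiconformal hypothesis enters: to guarantee $Z(V)\subseteq V_0$ (Proposition \ref{ZVV0}) and hence $\dim Z(V)<\infty$, so that Theorem \ref{VZ(V)} applies.

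One correction to your sketch of the idempotent claim: the $L(1)$/$sl_2$ route you propose does not by itself close the argument (applying $L(1)$ to your identity yields only the triviality $a_0(Da)=0$), and in fact no $Vir^+$-structure beyond the $\mathbb{N}$-grading is needed here. Since $a\in V_0$, weight reasons give $a_ia=0$ for all $i\geq 0$, hence $[a_m,a_n]=0$ for all $m,n$; then $(Da)_{-1}a=a_{-2}a$ and $a_{-1}Da=a_{-1}a_{-2}{\bf 1}=a_{-2}a_{-1}{\bf 1}=a_{-2}a$, so your identity $Da=(Da)_{-1}a+a_{-1}Da$ reads $Da=2a_{-2}a$; applying $a_{-1}$ once more yields $a_{-2}a=2a_{-2}a_{-1}a=2a_{-2}a$, whence $Da=0$.
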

\begin{proof} By Theorem \ref{VZ(V)}, we can conclude immediately that (1)$\Leftrightarrow$(2). Next, we will prove (2)$\Rightarrow$(3). First, by following the proof of Proposition 4.3 of \cite{DongMason2004}, one can show that $Z(V)$ contains every idempotent of $V_0$. This result implies that if $Z(V)$ is a local algebra then $V_0$ is a local algebra. Next, by Proposition \ref{V_0localindecomposable1}, we have that (3)$\Rightarrow$ (1). This completes the proof of this theorem. \end{proof}
\begin{prop}\label{localindecomposable} Let $V=\bigoplus_{n=0}^{\infty}V_n$ be an $\mathbb{N}$-graded vertex algebra. Let $J(V)=\cap M$ be the intersection of all maximal ideals $M$ of $V$. 
\begin{enumerate}
    \item If $I=\bigoplus_{n=0}^{\infty}I_n$ is an ideal in $V$ such that $I_0=\{0\}$, then $I\subseteq J(V)$. Here $I_n=I\cap V_n$ for all $n\in\mathbb{N}$.
    \item Assume that $V$ has countably many maximal ideals and $\dim V_0<\infty$. Then $V/J(V)$ is semisimple.
\end{enumerate}

\end{prop}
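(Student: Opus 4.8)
The plan is to handle the two parts separately, both by working with the grading. For part (1), suppose $I = \bigoplus_{n\ge 0} I_n$ is an ideal with $I_0 = \{0\}$; I want to show $I$ lies in every maximal ideal $M$ of $V$. If not, then $I + M = V$ for some maximal ideal $M$, and in particular ${\bf 1} \in I + M$. Write ${\bf 1} = w + m$ with $w \in I$, $m \in M$. Since ${\bf 1} \in V_0$ and $I_0 = 0$, the degree-zero component of $w$ vanishes, so $w \in \bigoplus_{n\ge 1} V_n$ and thus $m = {\bf 1} - w$ has degree-zero component equal to ${\bf 1}$. The key point is that any element of $V$ whose degree-zero component is the vacuum acts invertibly in an appropriate sense, or more directly: I would argue that $w$ is "topologically nilpotent" with respect to the $_{-1}$-product in the sense that repeated $_{-1}$-multiplication raises degree. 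Concretely, since $I$ is an ideal and $w \in I$ has no degree-zero part, one shows $V = M$ by observing that the ideal generated by $m = {\bf 1} - w$ must be all of $V$: from $w_{-1}^{\,k}$-type products one sees $({\bf 1}-w)$ has a one-sided inverse modulo higher-degree terms, forcing $M$ to contain ${\bf 1}$, a contradiction. The cleanest route is: the image of $I$ in $V/M$ is an ideal of the simple vertex algebra $V/M$, hence $0$ or $V/M$; it cannot be $V/M$ because the degree-zero part of $I$ is zero while $V/M$ is generated (as an ideal, indeed as a vertex algebra via the vacuum) in a way that detects degree zero — so $I \subseteq M$.

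For part (2), assume $V$ has countably many maximal ideals $M_1, M_2, \dots$ and $\dim V_0 < \infty$. I want $V/J(V)$ semisimple, i.e., a (countable) direct product/sum of simple vertex algebras, or at least that $J(V)$ equals the intersection and $V/J(V)$ has no nonzero nilpotent-type ideal. First I would use part (1): the ideal $\bigoplus_{n\ge 1} V_n \cap (\text{something})$... more precisely, I would show each maximal ideal $M_i$ has $M_i \cap V_0$ a maximal ideal of $V_0$ (using that $V_0$ is a quotient-controlling piece), and since $\dim V_0 < \infty$, $V_0$ has only finitely many maximal ideals $\mathfrak{m}_1, \dots, \mathfrak{m}_r$; the map $M \mapsto M \cap V_0$ then has finite image, so the maximal ideals of $V$ are partitioned into finitely many classes according to which $\mathfrak{m}_j$ they contract to. Within each class, I would argue the intersection behaves well. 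The Chinese Remainder Theorem for the finitely many distinct contractions $\mathfrak{m}_j$ gives $V_0/J(V_0) \cong \prod_j V_0/\mathfrak{m}_j$, a product of fields, hence semisimple; then I would lift this to conclude $V/J(V)$ is semisimple by showing $J(V) \cap V_0 = J(V_0)$ and that $V/J(V)$ is graded with degree-zero part a semisimple commutative algebra, and that an $\mathbb{N}$-graded vertex algebra whose degree-zero part is semisimple and which has zero "graded radical" $\bigoplus_{n\ge 1}(\cdots)$ must itself be a finite direct sum of simple (local, by Theorem \ref{indecomposablelocalV_0} ideas) vertex algebras.

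The main obstacle I anticipate is part (2): controlling how maximal ideals of $V$ relate to maximal ideals of $V_0$, and in particular showing the contraction map $M \mapsto M\cap V_0$ lands among the finitely many maximal ideals of $V_0$ and that distinct "blocks" give a genuine direct-sum decomposition. The countability hypothesis must be used to rule out pathological infinite families within a single block — presumably via an argument that inside a block all maximal ideals share the same degree-zero contraction, and then a maximality/finite-generation argument (or the idempotent-lifting machinery of Lemma \ref{indecomposableidempotent} and Theorem \ref{VZ(V)} applied after quotienting) forces the block to be a single maximal ideal or to collapse. Part (1), by contrast, should be essentially a one-paragraph grading argument once phrased via the simplicity of $V/M$. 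I would also need to be careful that "semisimple" for vertex algebras is taken in the right sense (finite direct sum of simple vertex algebras), which is why $\dim V_0 < \infty$ together with countability of maximal ideals is exactly what makes the decomposition finite.
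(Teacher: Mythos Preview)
Your part (1) circles the right idea but never lands on the one-line argument the paper uses. Once you have ${\bf 1}=w+m$ with $w\in I$, $m\in M$, simply project to degree zero: since $I_0=\{0\}$ and (in this $\mathbb{N}$-graded setting) the maximal ideal $M$ is itself graded, the degree-zero component of $m$ lies in $M_0$; but that component is ${\bf 1}$, so ${\bf 1}\in M$, a contradiction. There is no need for nilpotence, inverses, or the simplicity of $V/M$.

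Your part (2) has a genuine gap: your plan to study the contraction $M\mapsto M\cap V_0$, partition the maximal ideals into ``blocks,'' and then somehow collapse each block never isolates a mechanism that uses the countability hypothesis, and you yourself flag this as the obstacle. The paper's argument is much more direct and shows exactly where both hypotheses enter. Enumerate the maximal ideals as $M^1,M^2,\dots$ (this is where countability is used) and set $\mathcal{M}^k=\bigcap_{i=1}^{k}M^i$. The degree-zero parts $\mathcal{M}^k_0$ form a descending chain in the finite-dimensional space $V_0$, so they stabilize: there is $t_0$ with $\mathcal{M}^t_0=J(V)_0$ for all $t\ge t_0$. Then $\mathcal{M}^t/J(V)$ is a graded ideal of $V/J(V)$ with vanishing degree-zero part, so part (1) applied to $V/J(V)$ forces $\mathcal{M}^t/J(V)\subseteq J(V/J(V))=0$. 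Hence $J(V)=\mathcal{M}^{t_0}$ is a \emph{finite} intersection of maximal ideals, and $V/J(V)$ is semisimple. The point you are missing is that part (1) does all the work once the degree-zero chain stabilizes; no analysis of how maximal ideals of $V$ contract to $V_0$, and no idempotent-lifting, is needed.
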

\begin{proof} 
Observe that $J(V)=\bigoplus_{n=0}^{\infty}J(V)_n$ is $\mathbb{N}$-graded. Here, $J(V)_n=J(V)\cap V_n$. If $I$ is not a subset of $J(V)$, then there exists a maximal ideal $M$ of $V$ such that $I$ is not a subset of $M$. So, $M+I$ must be $V$. This implies that ${\bf 1}\in M_0+I_0=M_0$ which is impossible. So, $I\subseteq J(V)$. This proves (1).
    
    Now, we assume that $V$ has countably many maximal ideals and $\dim V_0<\infty$. Let $\mathcal{M}=\{M^1,M^2,....\}$ be collection of all maximal ideals. For each positive integer $k$, we set $\mathcal{M}^k=\cap_{i=1}^kM^i$. Notice that $\mathcal{M}^k=\bigoplus_{n=0}^{\infty}\mathcal{M}^k_n$ is an $\mathbb{N}$-graded ideal such that $\mathcal{M}^k_n=V_n\cap \mathcal{M}^k$. Moreover, we have the following relation
$$J(V)_0=\cap_{i=0}^{\infty}M^i_0\subseteq \cdots\subseteq \mathcal{M}^{k+1}_0\subseteq \mathcal{M}^{k}_0\cdots \subseteq\mathcal{M}^1_0\subseteq V_0 .$$ Since $\dim V_0<\infty$, it implies that there exists a positive integer $t_0$ such that $J(V)_0=\mathcal{M}^t=\cap_{i=0}^{t}M^i_0$ for all $t\geq t_0$. This implies that $\mathcal{M}^t/J(V)$ is an ideal such that $(\mathcal{M}^t/J(V))_0=\{0+J(V)\}$ where $(\mathcal{M}^t/J(V))_0=\mathcal{M}^t_0/J(V)$. By (1), we can conclude that $\mathcal{M}^t\subseteq J(V)\subseteq \mathcal{M}^t=\cap_{i=1}^{t_0}M^i$ for all $t\geq t_0$. In addition, $V/J(V)$ is semisimple. This proves (2).\end{proof}

\begin{thm}\label{Maintheoremforindecomposibility} Let $V=\bigoplus_{n=0}^{\infty}V_n$ be a semiconformal-vertex algebra that has countably many maximal ideals. Then the following are equivalent 
\begin{enumerate}
\item $V$ is local.
    \item $V$ is indecomposable.
    \item $Z(V)$ is a local algebra.
    \item $V_0$ is a local algebra.
\end{enumerate} 
\end{thm}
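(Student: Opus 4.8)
The plan is to assemble the equivalence from the pieces already established, with the only genuinely new content being the link between locality of $V$ and the other three conditions. By Theorem \ref{indecomposablelocalV_0} we already have $(2)\Leftrightarrow(3)\Leftrightarrow(4)$ for any semiconformal-vertex algebra with $\dim V_0\ge 2$, and by Proposition \ref{V_0localindecomposable} we have $(1)\Rightarrow(2)$ with no hypotheses at all; note also that if $\dim V_0=1$ then $V_0=\C\mathbf{1}$ is trivially local and $V$ is trivially indecomposable, so the countability hypothesis is only needed for the remaining implication. Thus the whole theorem reduces to proving $(4)\Rightarrow(1)$ (equivalently $(2)\Rightarrow(1)$), i.e. that an indecomposable semiconformal-vertex algebra with countably many maximal ideals and local $V_0$ is itself local.

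First I would invoke Proposition \ref{localindecomposable}(2): since $V$ has countably many maximal ideals and $\dim V_0<\infty$ (which follows from the quasi-vertex-operator-algebra axioms built into a semiconformal-vertex algebra), $V/J(V)$ is semisimple, where $J(V)$ is the intersection of all maximal ideals. So $V/J(V)\cong \bigoplus_{i} V/M^i$ for finitely many maximal ideals $M^1,\dots,M^t$ — this is exactly the content extracted in the proof of Proposition \ref{localindecomposable}(2), where $J(V)=\cap_{i=1}^{t}M^i$. The goal is to show $t=1$. Suppose $t\ge 2$. Then I would pass to $V_0$: intersecting with $V_0$, $J(V)_0=\cap_{i=1}^t M^i_0$ with the $M^i_0$ proper ideals of $V_0$ (each $M^i_0\ne V_0$ since $\mathbf 1\notin M^i$), and $V_0/J(V)_0$ receives a surjection making it a quotient of the semisimple $V/J(V)$, hence $V_0/J(V)_0$ is a semisimple commutative algebra. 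But $V_0$ local forces $V_0/J(V)_0$ to have a unique maximal ideal, so it is a local semisimple commutative algebra, hence a field, hence has exactly one maximal ideal among the $M^i_0$; more carefully, the Chinese-remainder decomposition of $V_0/J(V)_0$ into a product of fields (or simple pieces) must be a single factor because $V_0$ is local. This collapses the distinct $M^i_0$ to one, so all $M^i$ have the same degree-zero part.

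The remaining obstacle — and I expect this to be the crux — is upgrading "all maximal ideals $M^i$ agree in degree $0$" to "there is only one maximal ideal." For this I would argue that any maximal ideal $M$ of $V$ is determined by $M_0$: if $M\ne M'$ are maximal with $M_0=M'_0$, then $M+M'=V$, so $\mathbf 1\in M_0+M'_0=M_0$, a contradiction — wait, that already shows $M=M'$ once $M_0=M'_0$, since $M+M'$ would be a strictly larger ideal unless $M=M'$, and $(M+M')_0=M_0\ne V_0$ shows $M+M'\ne V$, contradicting maximality. So in fact the map $M\mapsto M_0$ is injective on maximal ideals, and combined with the previous paragraph ($V_0$ local $\Rightarrow$ the finitely many $M^i_0$ coincide) we get $t=1$, i.e. $J(V)$ is itself the unique maximal ideal and $V$ is local. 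I would then close the loop: $(1)\Rightarrow(2)$ by Proposition \ref{V_0localindecomposable}, $(2)\Leftrightarrow(3)\Leftrightarrow(4)$ by Theorem \ref{indecomposablelocalV_0} (handling $\dim V_0=1$ separately as above), and $(4)\Rightarrow(1)$ by the argument just sketched. The one point needing care is whether every proper ideal of $V$ is contained in a maximal ideal despite $V$ being infinite-dimensional; here the $\mathbb{N}$-grading saves us, since a proper ideal $I$ has $I_0\ne V_0$ a proper ideal of the finite-dimensional $V_0$, so it extends to a maximal ideal of $V_0$ and one pulls back, guaranteeing maximal ideals exist and that the countability hypothesis is about a nonempty, well-behaved set.
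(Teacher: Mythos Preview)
Your proposal is correct and follows essentially the same route as the paper: the implications $(1)\Rightarrow(2)$ and $(2)\Leftrightarrow(3)\Leftrightarrow(4)$ are assembled from Proposition~\ref{V_0localindecomposable} and Theorem~\ref{indecomposablelocalV_0}, and the new content $(4)\Rightarrow(1)$ goes through Proposition~\ref{localindecomposable}(2) to write $V/J(V)\cong\bigoplus_{i=1}^{t}V/M^{i}$, then uses locality of $V_0$ to force $t=1$. The paper phrases this last step via idempotents (the images of the vacua of the $V/M^i$ give $t$ orthogonal idempotents in $V_0/J(V)_0$, which is local), while you phrase it via the product decomposition $V_0/J(V)_0\cong\bigoplus_i V_0/M^i_0$ plus a separate injectivity argument for $M\mapsto M_0$; the injectivity step is correct but redundant, since the product decomposition already yields nontrivial idempotents whenever $t\ge 2$, contradicting locality of $V_0/J(V)_0$ directly.
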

\begin{proof} By Proposition \ref{V_0localindecomposable}, we know that (1)$\Rightarrow$(2). By Theorem \ref{indecomposablelocalV_0}, we have that statements (2), (3), (4) are equivalent. 

Now, we will prove that (4)$\Rightarrow$(1). Assume that $V_0$ is local. By Proposition \ref{localindecomposable}, we have that $V/J(V)$ is semisimple, and $V/J(V)$ is a finite sum of simple semiconformal-vertex algebras. In addition, there exist maximal ideals $M^1,...,M^r$ such that $V/J(V)$ is isomorphic to $\bigoplus_{i=1}^rV/M^i$ as semi-conformal vertex algebras and $J(V)=\cap_{i=1}^rM^i$. Following the proof of Theorem 2 on page 363 of \cite{DongMason2004}, we choose $e_i\in V_0$ such that the image of $e_i+J(V)$ in $V/M^i$ is its vacuum vector. Observe that $e_i+J(V)$ is an idempotent of $V_0+J(V)$ for all $i\in\{1,...,r\}$. Since $V_0$ is local, this implies that $r=1$ and $J(V)=M^1$. Therefore, $V$ is local. \end{proof}

\section{Contragredient modules and bilinear forms of Semiconformal-vertex algebras}\label{bilinearformssection}

H. Li and M. Roitman investigated invariant bilinear forms on vertex operator algebras and quasi vertex operator algebras in \cite{Li1994}, 
 and \cite{Roitman2004Invariant}, respectively. This section extends their study by examining invariant bilinear forms on semiconformal-vertex algebras. The main result of this section is presented in Theorem \ref{invariant}. Notably, this result was referenced in M. Li's paper without proof (\cite{Li2019}). For the sake of completeness, we have provided a detailed proof here. Our work generalizes and refines H. Li's work in \cite{Li1994}.

\begin{dfn}\cite{LepowskyLi2004} Let $(V,Y,{\bf 1})$ be a vertex algebra. A {\em $V$-module} is a vector space $W$ equipped with a linear map 
\begin{eqnarray*}
    Y_W:V&\rightarrow&\End(W)[[x,x^{-1}]]\\
    v&\mapsto&Y_W(v,x)=\sum_{n\in\mathbb{Z}}v_nx^{-n-1} (\text{ where }v_n\in \End(W))
\end{eqnarray*}
such that for $u,v\in V$, $w\in W$:
\begin{eqnarray}
    &&u_nw=0\text{ for $n$ sufficiently large};\\
    &&Y_W({\bf 1},z)=1_W;\\
    &&x_0^{-1}\delta\left(\frac{x_1-x_2}{x_0}\right)Y_W(u,x_1)Y_W(v,x_2)-x_0^{-1}\delta\left(\frac{x_2-x_1}{-x_0}\right)Y_W(v,x_2)Y_W(u,x_1)\\
    &&=x_2^{-1}\delta\left(\frac{x_1-x_0}{x_2}\right) Y_W(Y(u,x_0)v,x_2).\nonumber
\end{eqnarray} 
    
\end{dfn}
\begin{prop}\cite{LepowskyLi2004} 
Let $(W,Y_W)$ be a $V$-module. Then $Y_W(Dv,x)=\frac{d}{dx}Y_W(v,x)$ for $v\in V$.
    
\end{prop}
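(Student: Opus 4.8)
The plan is to deduce this identity directly from the Jacobi identity for the $V$-module $W$ by specializing the \emph{second} input vector to the vacuum. Concretely, I would set $v={\bf 1}$ in the module Jacobi identity. Since $Y_W({\bf 1},x_2)=1_W$ and $Y(u,x_0){\bf 1}=e^{x_0D}u$, the left-hand side collapses by the standard $\delta$-function identity
\[
x_0^{-1}\delta\!\left(\frac{x_1-x_2}{x_0}\right)-x_0^{-1}\delta\!\left(\frac{x_2-x_1}{-x_0}\right)=x_2^{-1}\delta\!\left(\frac{x_1-x_0}{x_2}\right)
\]
to the single term $x_2^{-1}\delta\!\left(\tfrac{x_1-x_0}{x_2}\right)Y_W(u,x_1)$, so that the specialized identity reads
\[
x_2^{-1}\delta\!\left(\frac{x_1-x_0}{x_2}\right)Y_W(u,x_1)=x_2^{-1}\delta\!\left(\frac{x_1-x_0}{x_2}\right)Y_W(e^{x_0D}u,x_2).
\]

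Next I would apply $\Res_{x_1}$ to both sides. On the right-hand side, $Y_W(e^{x_0D}u,x_2)$ does not involve $x_1$ and $\Res_{x_1}x_2^{-1}\delta\!\left(\tfrac{x_1-x_0}{x_2}\right)=1$, so that side yields $Y_W(e^{x_0D}u,x_2)$. On the left-hand side, rewriting $x_2^{-1}\delta\!\left(\tfrac{x_1-x_0}{x_2}\right)=x_1^{-1}\delta\!\left(\tfrac{x_0+x_2}{x_1}\right)$ and using the elementary fact $\Res_{x_1}x_1^{-1}\delta\!\left(\tfrac{a}{x_1}\right)Y_W(u,x_1)=Y_W(u,a)$ (a short bookkeeping computation on the coefficient of $x_1^{-1}$), that side yields $Y_W(u,x_2+x_0)=e^{x_0\frac{\partial}{\partial x_2}}Y_W(u,x_2)$, where $x_2+x_0$ is expanded in nonnegative powers of $x_0$. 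Equating the two gives
\[
e^{x_0\frac{\partial}{\partial x_2}}Y_W(u,x_2)=Y_W(e^{x_0D}u,x_2),
\]
and extracting the coefficient of $x_0^1$ produces $\frac{d}{dx}Y_W(u,x)=Y_W(Du,x)$, as desired; comparing the coefficient of $x_0^k$ more generally gives $\left(\frac{d}{dx}\right)^kY_W(u,x)=Y_W(D^ku,x)$.

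The only point requiring care is the $\delta$-function bookkeeping: verifying $x_2^{-1}\delta\!\left(\tfrac{x_1-x_0}{x_2}\right)=x_1^{-1}\delta\!\left(\tfrac{x_0+x_2}{x_1}\right)$ under the correct expansion conventions, and confirming the two residue evaluations above. These are routine facts of formal calculus (available from the $\delta$-function identities in \cite{LepowskyLi2004}), so there is no substantive obstacle—the argument is simply the module-theoretic version of the vertex-algebra relation $Y(v,x){\bf 1}=e^{xD}v$ together with $Y(Dv,x)=\frac{d}{dx}Y(v,x)$.
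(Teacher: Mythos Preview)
Your argument is correct and is precisely the standard derivation found in \cite{LepowskyLi2004}: specialize the module Jacobi identity at $v={\bf 1}$, use $Y_W({\bf 1},x_2)=1_W$ and $Y(u,x_0){\bf 1}=e^{x_0D}u$ together with the three-term $\delta$-identity, and take $\Res_{x_1}$ to obtain $Y_W(e^{x_0D}u,x_2)=Y_W(u,x_2+x_0)$. The paper itself offers no proof of this proposition---it simply cites \cite{LepowskyLi2004} as the source---so there is nothing to compare against beyond noting that your approach matches the one in the cited reference. One small point of notation: when you rewrite the $\delta$-function as $x_1^{-1}\delta\!\left(\tfrac{x_0+x_2}{x_1}\right)$, the expansion convention should place $x_2$ first (i.e.\ $x_2+x_0$, expanded in nonnegative powers of $x_0$), as you correctly indicate later; keeping the order consistent avoids ambiguity.
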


As mentioned in \cite{LepowskyLi2004} Remark 4.1.4, while the $D$-operator is a canonical operator on the vertex algebra $V$, $D$ does not act on a $V$-module so we do not, in general, have an analog of the $D$-bracket formulas. But such formulas are important enough that we shall introduce the term ``$V$-module $(W,Y_W,d)$'' to refer to a $V$-module $(W,Y_W)$ equipped with an endomorphism $d$ on $W$ such that for $v\in V$, $[d,Y_W(v,x)]=Y_W(Dv,x)=\frac{d}{dx}Y_W(v,x).$ When we have this structure, we also have the conjugation formula
$e^{x_0d}Y_W(v,x)e^{-x_0d}=Y_W(e^{x_0D}v,x)=Y_W(v,x+x_0).$


\begin{dfn}\cite{LepowskyLi2004} Let $W$ be a $V$-module. A vector $w\in W$ is a {\em vacuum-like} vector if $u_nw=0$ for $u\in V$, $n\geq 0$.
\end{dfn}

\begin{prop}\label{Z(W)}\cite{LepowskyLi2004} Let $(V,Y,{\bf 1})$ be a vertex algebra and let $(W,Y_W,d)$ be a $V$-module. Let $w\in W$ such that $dw=0$. Then  $Y(u,x)w=e^{xd}u_{-1}w$ for $u\in V.$ In particular, $w$ is a vacuum-like vector. 
\end{prop}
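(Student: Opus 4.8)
The plan is to deduce the whole statement from the single $d$-bracket identity $[d,Y_W(u,x)]=Y_W(Du,x)=\frac{d}{dx}Y_W(u,x)$ available for the triple $(W,Y_W,d)$, together with the truncation axiom $u_nw=0$ for $n$ sufficiently large. First I would extract the component form of this identity: comparing coefficients of $x^{-n-1}$ on both sides gives $[d,u_n]=-n\,u_{n-1}$ for all $u\in V$ and $n\in\mathbb{Z}$.

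Next I would show that $w$ is vacuum-like. Suppose to the contrary that $u_mw\neq 0$ for some $u\in V$ and some $m\ge 0$. Since $u_nw=0$ for $n$ sufficiently large, there is a largest integer $N$ with $u_Nw\neq 0$, and $N\ge m\ge 0$; in particular $u_{N+1}w=0$. Applying $[d,u_{N+1}]=-(N+1)u_N$ to $w$ and using $dw=0$,
$$-(N+1)\,u_Nw=[d,u_{N+1}]w=d(u_{N+1}w)-u_{N+1}(dw)=0,$$
so $u_Nw=0$ because $N+1\neq 0$, a contradiction. Hence $u_nw=0$ for all $u\in V$ and all $n\ge 0$; that is, $w$ is vacuum-like, and therefore $Y_W(u,x)w=\sum_{k\ge 0}(u_{-k-1}w)x^{k}\in W[[x]]$.

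Finally I would pin down the formula. Applying $[d,Y_W(u,x)]=\frac{d}{dx}Y_W(u,x)$ to $w$ and using $dw=0$ shows that $F(x):=Y_W(u,x)w\in W[[x]]$ satisfies the formal differential equation $\frac{d}{dx}F(x)=d\,F(x)$. Writing $F(x)=\sum_{k\ge 0}a_kx^k$ with $a_0=F(0)=u_{-1}w$, the equation becomes $(k+1)a_{k+1}=d\,a_k$, whence $a_k=\frac{1}{k!}d^ka_0$ by induction and $F(x)=\sum_{k\ge 0}\frac{x^k}{k!}d^k(u_{-1}w)=e^{xd}u_{-1}w$, as claimed.

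I do not anticipate a genuine obstacle here; the only point needing a little care is the appeal to the truncation axiom in the second step to guarantee the maximal index $N$ (the degenerate case $u_nw=0$ for all $n$ being trivial, both sides of the asserted identity then vanishing). An alternative derivation of $Y_W(u,x)w\in W[[x]]$, closer to \cite{LepowskyLi2004}, avoids the maximal-index argument: from the conjugation formula $e^{x_0d}Y_W(u,x)e^{-x_0d}=Y_W(u,x+x_0)$ and $e^{-x_0d}w=w$ one gets $e^{x_0d}Y_W(u,x)w=Y_W(u,x+x_0)w$, where the left side lies in $W((x))[[x_0]]$ and the right side in $W((x_0))[[x]]$; the common value therefore lies in $W[[x,x_0]]$, and specializing $x_0=0$ gives $Y_W(u,x)w\in W[[x]]$. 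Either route then feeds into the ODE step above.
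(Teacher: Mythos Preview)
Your proof is correct. The paper does not supply its own proof of this proposition: it simply quotes the result from \cite{LepowskyLi2004} and moves on, so there is nothing in the paper to compare against line by line. Your argument---extracting the component relation $[d,u_n]=-nu_{n-1}$, using truncation to force vacuum-likeness via a maximal-index contradiction, and then solving the formal ODE $\frac{d}{dx}F(x)=d\,F(x)$---is a standard and complete derivation, and your alternative route via the conjugation formula $e^{x_0d}Y_W(u,x)e^{-x_0d}=Y_W(u,x+x_0)$ is indeed the argument closer to the one in the cited reference.
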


\begin{dfn}
    Let $V$ be a quasi vertex operator algebra. Let $(W,Y_W)$ be a module of $V$ as a vertex algebra. $W$ is a module for $V$ as a quasi vertex operator algebra if $W$ satisfies the following additional conditions:
    \begin{itemize}
    \item $W=\bigoplus_{n\in\mathbb{Q}}W_n$ is a $\mathbb{Q}$-graded vector space;
    \item for $n\in\mathbb{Q}$, $\dim W_n<\infty$;
    \item $W_n=0$ for $n$ sufficiently small; 
    \item there is a representation $\rho$ of $ \mathfrak{sl}(2)$ on $W$ given by: $L_W(j)=\rho(L_j)$, $j\in\{0,\pm 1\}$, where $\{L_{-1},L_0,L_1\}$ is a basis of $\mathfrak{sl}(2)$ with the Lie brackets
$$[L_0,L_{-1}]=L_{-1},~[L_0,L_1]=-L_1\text{ and }[L_{-1}, L_1]=-2L_0,$$ 
and the following conditions hold for $v\in V$ and $j\in\{0,\pm 1\}$:
\begin{eqnarray*}
    [L_W(j),Y_W(v,x)]&=&\sum_{k=0}^{j+1}\binom{j+1}{k}x^{j+1-k}Y_W(L(k-1)v,x),\\
    \frac{d}{dz}Y_W(v,x)&=&Y_W(L(-1)v,x),\text{ and }\\
    L_W(0)w&=&nw=(wt (w))w\text{ for }n\in\mathbb{Q}\text{ and }w\in W_n.
\end{eqnarray*}
\end{itemize}
 \end{dfn}
\begin{rem} For a module $(W,Y_W)$ of a quasi vertex operator algebra $(V,Y,{\bf 1})$,  \begin{eqnarray*}&&[L_W(-1),Y_W(v,x)]=Y_W(L(-1)v,x)=\frac{d}{dx}Y_W(v,x),\\
&&e^{x_0L_W(-1)}Y_W(u,x)e^{-x_0L_W(-1)}=Y_W(e^{x_0L(-1)}v,x)=Y_W(v,x+x_0).
    \end{eqnarray*}
\end{rem}
Now, we let $(V,Y,{\bf 1})$ be a quasi vertex operator algebra and let $(W=\oplus_{n\in\mathbb{Q}}W_n,Y_W)$ be a module for the quasi vertex operator algebra $V$. We set 
$$W'=\oplus_{n\in\mathbb{Q}}W_n^*$$ to be the graded dual space of $W$. Here, $W_n^*$ are dual spaces of the homogeneous subspaces $W_n$. Denote by $\langle\cdot,\cdot\rangle$ the pairing between $W$ and $W'$. We define the adjoint vertex operators $Y_{W'}(v,x)$ by means of the linear map
\begin{eqnarray*}
    V&\rightarrow &(\End W')[[x,x^{-1}]]\\
    v&\mapsto&Y_{W'}(v,x)=\sum_{n\in\mathbb{Z}}v_n'x^{-n-1} \text{ where }v_n'\in \End W',
\end{eqnarray*}
determined by the conditions
\begin{eqnarray*}
    &&\langle Y_{W'}(v,x)w',w\rangle=\langle w', Y_W(e^{xL(1)}(-x^{-2})^{L(0)}v,x^{-1})v\rangle,\\
    &&\langle L_{W'}(n)w',w\rangle=\langle w',L_W(n)w\rangle\end{eqnarray*}for $v\in V$, $w'\in W'$, $w\in W$, and $n\in\{0,\pm 1\}$.
\begin{prop}
    The pair $(W',Y_{W'})$ is a module for the quasi vertex operator algebra $V$. $W'$ is called contragredient.
\end{prop}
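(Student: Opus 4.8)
The plan is to adapt Li's proof for vertex operator algebras (from \cite{Li1994}) to the semiconformal-vertex algebra setting, which essentially means verifying that the adjoint action $Y_{W'}$ defined through the prescribed pairing conditions genuinely produces a quasi-vertex-operator-algebra module structure. First I would check the elementary module axioms for $(W',Y_{W'})$: that $v_n'w' = 0$ for $n$ sufficiently large (this follows from the grading, since $W$ is bounded below and each $W_n$ is finite-dimensional, so applying $Y_W(\cdot,x^{-1})$ to a fixed $w \in W$ only reaches finitely many graded pieces), and that $Y_{W'}({\bf 1},x) = 1_{W'}$ (immediate from $e^{xL(1)}(-x^{-2})^{L(0)}{\bf 1} = {\bf 1}$ and $Y_W({\bf 1},x^{-1}) = 1_W$, since ${\bf 1} \in V_0$ is annihilated by $L(1)$ and fixed by $L(0)$).

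Next I would verify the Jacobi identity for $Y_{W'}$. Following Li, the standard route is to use the substitution-adjoint formalism: one rewrites the adjoint operator as $Y_{W'}(v,x) = Y_W^*\big(e^{xL(1)}(-x^{-2})^{L(0)}v, x^{-1}\big)$ (transpose of an operator), and then the Jacobi identity for $Y_{W'}$ is deduced from the Jacobi identity for $Y_W$ by applying the operator $e^{xL(1)}(-x^{-2})^{L(0)}$, using its compatibility with the $\mathfrak{sl}(2)$-action. The key algebraic identities needed are the conjugation formulas relating $e^{xL(1)}$, $(-x^{-2})^{L(0)}$ and the vertex operators — precisely the $[L(j),Y_W(v,x)]$ commutator relations with $j \in \{0,\pm1\}$ available from the quasi-VOA module axioms, plus the observation that the map $v \mapsto e^{xL(1)}(-x^{-2})^{L(0)}v$ intertwines $Y$ with a change of variable. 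I would also check the $L_W'(n)$-grading axioms: $L_{W'}(0)$ acts on $W_n^*$ as $n$ (transpose of $L_W(0)$ on $W_n$), the $\mathfrak{sl}(2)$-bracket relations $[L_{W'}(0),L_{W'}(\pm1)] = \mp L_{W'}(\pm1)$ and $[L_{W'}(-1),L_{W'}(1)] = -2L_{W'}(0)$ (these transpose correctly with the sign bookkeeping), and the commutator $[L_{W'}(j),Y_{W'}(v,x)] = \sum_{k=0}^{j+1}\binom{j+1}{k}x^{j+1-k}Y_{W'}(L(k-1)v,x)$ for $j \in \{0,\pm1\}$, together with $\frac{d}{dx}Y_{W'}(v,x) = Y_{W'}(L(-1)v,x)$. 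The finite-dimensionality and lower-bound on the grading of $W'$ are inherited from $W$.

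The main obstacle I expect is the careful verification of the Jacobi identity for $Y_{W'}$ — in particular tracking the formal-variable substitutions $x \mapsto x^{-1}$ and the action of $(-x^{-2})^{L(0)}$ through the $\delta$-function identities without sign or exponent errors, and confirming that the $L(1)$-exponential conjugation behaves correctly on each term. This is where Li's argument is most delicate, since one must commute $e^{xL(1)}$ past $Y_W$ using the $j=1$ commutator relation (which, unlike the $j=0,-1$ cases, genuinely uses the full $\mathfrak{sl}(2)$ and not just $Vir^+$), and then reorganize the resulting expression back into Jacobi-identity form for $W'$. Everything else — the vacuum and truncation axioms, and the grading/$\mathfrak{sl}(2)$ compatibility — is routine transposition. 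I would present the proof by first establishing the conjugation/intertwining lemma for $e^{xL(1)}(-x^{-2})^{L(0)}$, then deriving the Jacobi identity, and finally collecting the grading axioms, remarking at each stage that only the $\mathfrak{sl}(2)$-structure (not the extra $Vir^+$-structure of a semiconformal-vertex algebra) is used here, so that the contragredient of a quasi-VOA module is a quasi-VOA module; the semiconformal refinements are then handled separately in the sequel leading to Theorem \ref{invariant}.
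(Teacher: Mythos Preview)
Your outline is correct and is essentially the argument of Frenkel--Huang--Lepowsky, which is exactly what the paper invokes: the paper's proof consists of the single line ``See the proof in Theorem 5.2.1 in \cite{FrenkelHuangLepowsky}.'' So you have reconstructed the cited proof rather than deviated from it; note that the result and its proof originate in \cite{FrenkelHuangLepowsky} rather than \cite{Li1994}, and only the $\mathfrak{sl}(2)$-structure is needed (as you observed), which is why the statement lives at the quasi-VOA level.
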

\begin{proof}
    See the proof in Theorem 5.2.1 in \cite{FrenkelHuangLepowsky}.
\end{proof}

\begin{dfn}\cite{FrenkelHuangLepowsky, Li1994} Let $(V,Y,{\bf 1})$ be a quasi vertex operator algebra. A bilinear form $(~,~)$ on $V$ is said to be invariant if it satisfies the following conditions
\begin{eqnarray}
(Y(a,x)u,v)&=&(u,Y(e^{xL(1)}(-x^{-2})^{L(0)}a,x^{-1})v)\text{ for }a,u,v\in V;\\
(L(n)u,v)&=&(u,L(-n)v)\text{ for }n\in\{0,\pm 1\}.
\end{eqnarray}
\end{dfn}


\begin{rem}\cite{FrenkelHuangLepowsky} Assume that $(~,~)$ is an invariant bilinear form on a quasi vertex operator algebra $(V,Y,{\bf 1})$. Then for $a,u,v\in V$,
\begin{enumerate}
    \item we have $(u,Y(a,x)v)=(Y(e^{xL(1)}(-x^{-2})^{L(0)}a,x^{-1})u,v)$. 
    \item Moreover, $(~,~)$ is symmetric.
\end{enumerate}
    
\end{rem} 
Let $(~,~)$ be a symmetric invariant bilinear form on a quasi vertex operator algebra $V$. Then for $u,v\in V$, we have 
\begin{eqnarray}
    (u,v)&=&\Res_x x^{-1}(Y(u,x){\bf 1},v)\label{sys1}\\
    &=&\Res_x x^{-1}({\bf 1}, Y(e^{xL(1)}(-x^{-2})^{L(0)}u,x^{-1})v).
\end{eqnarray}
Let $f$ be the linear functional on $V_0$ defined by \begin{equation}\label{sys2}f(u)=({\bf 1},u)\end{equation} for $u\in V_0$. Since $({\bf 1},L(1)V_1)=(L(-1){\bf 1},V_1)=0$, it implies that $L(1)V_1\subseteq Ker(f)$.

\begin{dfn} Let $V$ be a vertex algebra. Let $(M,Y_M)$ and $(W,Y_W)$ be $V$-modules. A linear map $\varrho:W\rightarrow M$ is a $V$-homomorphism if 
\begin{eqnarray}
    \varrho(Y_W(a,z)w)&=&Y_M(a,z)\varrho(w)\text{ for }w\in W.
\end{eqnarray}
\end{dfn} Now, we let $V$ be a quasi vertex operator algebra and we set \begin{eqnarray*}\Hom_{M,W, L(-1)}(M,W)&=&\{f\in\End_{\mathbb{C}}(M,W)~|~f\text{ is a $V$-homomorphism and }\\
&&\hspace{4cm} fL_M(-1)=L_W(-1)f\}.
\end{eqnarray*}
\begin{prop} Let $V$ be a quasi vertex operator algebra. Let $M$ be a module of the quasi vertex operator algebra $V$. We set $Z(M)=\{w\in M~|~L_M(-1)w=0\}$.   Let $\pi:\Hom_{V,M, L(-1)}(V,M)\rightarrow Z(M);\varphi\mapsto \varphi({\bf 1})$. Then the linear map $\pi$ is an isomorphism. 
\end{prop}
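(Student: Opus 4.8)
The plan is to construct an explicit two-sided inverse for $\pi$ and to verify that it lands in the correct space. The map $\pi$ sends a $V$-homomorphism $\varphi \colon V \to M$ commuting with $L(-1)$ to $\varphi(\mathbf{1})$; I first need to check this is well-defined, i.e. that $\varphi(\mathbf{1}) \in Z(M)$. This is immediate: $L_M(-1)\varphi(\mathbf{1}) = \varphi(L(-1)\mathbf{1}) = \varphi(D\mathbf{1}) = \varphi(\mathbf{1}_{-2}\mathbf{1}) = 0$, using $L(-1)=D$ on $V$ and $D\mathbf{1}=0$. So $\pi$ is a well-defined linear map.

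For surjectivity and injectivity together, I would mimic the argument already used in the excerpt for $\Hom_{V,D}(V,V)$ and for Proposition \ref{Hom(VV)}. Given $w \in Z(M)$, define $\theta_w \colon V \to M$ by $\theta_w(v) = v_{-1}w$. By Proposition \ref{Z(W)} (with $d = L_M(-1)$), since $L_M(-1)w = 0$ we have $Y_M(u,x)w = e^{xL_M(-1)}u_{-1}w$ for all $u \in V$; I would use this together with the $V$-module axioms (the iterate/Jacobi identity) to show $\theta_w(Y(u,x)v) = Y_M(u,x)\theta_w(v)$, exactly as the computation $f_e(u_nv)=u_nf_e(v)$ was done in \eqref{imageoffe}, and to show $\theta_w L(-1) = L_M(-1)\theta_w$ by the same bracket computation $L_M(-1)u_{-1}w = u_{-1}L(-1)w + (L(-1)u)_{-1}w$ combined with $L(-1)w=0$. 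Hence $\theta_w \in \Hom_{V,M,L(-1)}(V,M)$, and $\pi(\theta_w) = \theta_w(\mathbf{1}) = \mathbf{1}_{-1}w = w$, so $\pi$ is surjective. Conversely, for any $\varphi \in \Hom_{V,M,L(-1)}(V,M)$ and any $v \in V$, skew-symmetry $Y(v,x)\mathbf{1} = e^{xD}v$ gives $v = v_{-1}\mathbf{1}$, so $\varphi(v) = \varphi(v_{-1}\mathbf{1}) = v_{-1}\varphi(\mathbf{1}) = \theta_{\varphi(\mathbf{1})}(v)$; thus $\varphi$ is determined by $\varphi(\mathbf{1})$, giving injectivity and simultaneously showing $w \mapsto \theta_w$ is a genuine inverse.

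The one point requiring care — and the main obstacle — is the verification that $\theta_w$ is a $V$-homomorphism, i.e. that $v_{-1}w$ behaves correctly under all the products $u_n$, not just the $-1$-product. Unlike the situation in Proposition \ref{Hom(VV)}, where one works inside the single vertex algebra $V$ and can invoke the Corollary giving $e_{-1}v_n = v_n e_{-1}$ for central $e$, here $w$ lies in the module $M$ and $w$ is only vacuum-like (annihilated by $u_n$ for $n \geq 0$) rather than central. So I would extract the needed identity from Proposition \ref{Z(W)}: from $Y_M(u,x)w = e^{xL_M(-1)}u_{-1}w$ one reads off $u_n w = 0$ for $n \geq 0$ and $u_nw = \tfrac{(-1)^{-n-1}}{(-n-1)!}L_M(-1)^{-n-1}(u_{-1}w)$ for $n < -1$; then applying the commutator formula $[u_m, Y_M(v,x)] = \sum_{i\ge 0}\binom{m}{i}x^{m-i}Y_M(u_iv,x)$ (equivalently the appropriate component of the Jacobi identity) to the vector $w$, together with $u_iw = 0$ for $i\ge 0$, collapses $Y_M(Y(u,x_0)v,x_2)w$ to $Y_M(u,x_1)Y_M(v,x_2)w$ in a way that yields $\theta_w(Y(u,x)v) = Y_M(u,x)\theta_w(v)$. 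This is the same mechanism underlying the proof of Lemma 2.1/2.3 of \cite{DongMason2004} cited earlier; once it is in place, compatibility with $L(-1)$ and the isomorphism claim follow routinely.
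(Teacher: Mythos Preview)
Your proposal is correct and follows essentially the same approach as the paper: define $\theta_w(v)=v_{-1}w$ for $w\in Z(M)$, use Proposition~\ref{Z(W)} to see $w$ is vacuum-like, then invoke the Jacobi identity to check $\theta_w$ is a $V$-homomorphism intertwining $L(-1)$, and conclude that $w\mapsto\theta_w$ inverts $\pi$. The only cosmetic difference is that the paper verifies the homomorphism property via the associativity form $Y_M(Y(a,x_0)b,x_2)w=Y_M(a,x_0+x_2)Y_M(b,x_2)w$ (the correction term vanishes since $Y_M(a,x)w$ has no negative powers of $x$) and then takes $\Res_{x_2}x_2^{-1}$, whereas you sketch the equivalent component-wise route through the iterate formula and $u_iw=0$ for $i\ge 0$.
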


\begin{proof} The proof of this proposition is very similar to the proof in Proposition 3.4 of \cite{Li1994}. For $\varphi\in \Hom_{V,M,L(-1)}(V,M)$, we have $\varphi(a)=\varphi(a_{-1}{\bf 1})=a_{-1}\varphi({\bf 1})$ for all $a\in V$. If $\pi(\varphi)=0$, then $\varphi=0$. Hence, $\pi$ is an injective map. Next, we will show that the linear map $\pi$ is an isomorphism. For $u\in Z(M)$, we define a linear map $\varphi_u:V\rightarrow M$ by $\varphi_u(a)=a_{-1}u$ for $a\in V$. Since $u\in Z(M)$, we have $Y_M(a,x)u=e^{xL_M(-1)}a_{-1}u$ (cf. Proposition \ref{Z(W)}). Because 
    \begin{eqnarray*}
        &&Y_M(Y(a,x_0)b,x_2)u\\
        &=&Y_M(a,x_0+x_2)Y_M(b,x_2)u+Y_M(b,x_2)(Y_M(a,x_0+x_2)-Y_M(a,x_2+x_0))u\\
        &=&Y_M(a,x_0+x_2)e^{x_2L_M(-1)}b_{-1}u+Y_M(b,x_2)(e^{(x_0+x_2)L_M(-1)}a_{-1}-e^{(x_2+x_0)L_M(-1)}a_{-1})u\\
        &=&Y_M(a,x_0+x_2)e^{x_2L_M(-1)}b_{-1}u,
    \end{eqnarray*}
we then have 
\begin{eqnarray*}
\varphi_u(Y(a,x_0)b)&=&\Res_{x_2}x_2^{-1}Y_M(Y(a,x_0)b,x_2)u\\
&=&\Res_{x_2}x_2^{-1}Y_M(a,x_0+x_2)e^{x_2L_M(-1)}b_{-1}u\\
&=&Y_M(a,x_0)b_{-1}u\\
&=&Y_M(a,x_0)\varphi_u(b).
\end{eqnarray*} Also, 
\begin{eqnarray*}
\varphi_u(L(-1)a)&=&(L(-1)a)_{-1}u\\
&=&L_M(-1)a_{-1}u-a_{-1}L_M(-1)u\\
&=&L_M(-1)a_{-1}u\\
&=&L_M(-1)\varphi_u(a)\end{eqnarray*} for all $a\in V$. Hence $\varphi_u\in \Hom_{V,M,L(-1)}(V,M)$ and $\pi$ is an isomorphism.
\end{proof}
\begin{dfn}
  Let $V$ be a semiconformal-vertex algebra. Let $(W=\bigoplus_{n\in\mathbb{Q}}W_n, Y)$ be a module of $V$ as vertex algebra. $W$ is a module of $V$ as semiconformal-vertex algebra if it satisfies the following additional conditions
  \begin{itemize}
    \item there is a representation $\rho$ of $Vir^+$ on $W$ given by $L_W(j)=\rho(L_j)$ for $j\geq -1$,
    \item 
    $[L_W(j),Y_W(v,x)]=\sum_{k=0}^{j+1}\binom{j+1}{k}x^{j+1-k}Y_W(L(k-1)v,x)$ for $j\geq -1$, and 
\item $W$ is a module of $V$ as a quasi vertex operator algebra.
\end{itemize}
\end{dfn}
\begin{lem}\label{CenterW}
    Let $V=\bigoplus_{n\in\mathbb{Z}}V_n$ be a semiconformal-vertex algebra and let $W=\bigoplus_{n\in\mathbb{Q}}W_n$ be a module of $V$ as a semiconformal-vertex algebra. If $u\in Z(W)$ then $L_W(m)u=0$ for all $m\geq -1$ and $u\in W_0$. 
\end{lem}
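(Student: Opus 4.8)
The plan is to mimic the argument used in Proposition \ref{ZVV0}, which in turn follows Lemma 2.2 of \cite{DongMason2004}, but now transported from the algebra $V$ to a module $W$. So suppose $u \in Z(W)$, i.e. $L_W(-1)u = 0$. The key structural input is the commutator formula $[L_W(j), Y_W(v,x)] = \sum_{k=0}^{j+1}\binom{j+1}{k}x^{j+1-k}Y_W(L(k-1)v,x)$ valid for all $j \geq -1$, together with the fact (Proposition \ref{Z(W)}) that a vector killed by $L_W(-1)$ is vacuum-like, so $Y_W(v,x)u = e^{xL_W(-1)}v_{-1}u$ and in particular $v_n u = 0$ for $n \geq 0$.

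First I would show $L_W(0)u = 0$. Apply the commutator formula with $j=0$ to the vacuum of $V$: $[L_W(0), Y_W(\vac,x)] = \sum_{k=0}^{1}\binom{1}{k}x^{1-k}Y_W(L(k-1)\vac,x) = x\,Y_W(L(-1)\vac,x) + Y_W(L(0)\vac,x)$. Since $Y_W(\vac,x) = 1_W$, $L(-1)\vac = 0$, and $L(0)\vac = 0$, the left side is $0$ and nothing is learned directly that way — instead I would extract $L_W(0)u$ from the relation $u = u_{-1}$-type identities. The cleaner route: use that $u$ is homogeneous. Because $u \in Z(W)$ we have $v_n u = 0$ for all $v\in V$, $n \geq 0$; applying this with $v = \omega$-like elements is not available (no conformal vector), so instead use the $\mathfrak{sl}_2$-action directly. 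We know $L_W(0)$ acts semisimply with eigenvalue $\mathrm{wt}(u)$ on the homogeneous component containing $u$; I would argue $L_W(1)u = 0$ first, then $L_W(-1)u=0$ plus $[L_W(1),L_W(-1)] = -2L_W(0)$ (reading off the $\mathfrak{sl}_2$ brackets, being careful with the sign convention in the paper, where $[L_{-1},L_1]=-2L_0$) forces $L_W(0)u = 0$, hence $\mathrm{wt}(u) = 0$ and $u \in W_0$.

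To get $L_W(1)u = 0$: this is where the grading and the lower-truncation of $W$ enter. Since $W_n = 0$ for $n$ sufficiently small and $L_W(1)$ lowers weight by $1$, iterating $L_W(1)$ on $u$ eventually lands in the zero space, so some power $L_W(1)^N u = 0$; combined with $L_W(-1)u=0$ and the $\mathfrak{sl}_2$ representation theory (a lowest-weight-type vector that is also killed by a power of the raising operator in a locally finite $\mathfrak{sl}_2$-module must have nonnegative integer weight and, being annihilated by $L_W(-1)$ which here plays the lowering role in their convention, actually be a singular vector of weight $0$), one concludes $L_W(0)u=0$ and $L_W(1)u=0$. Then for the higher $L_W(m)$, $m \geq 1$: use the commutator formula to express $L_W(m)$ acting on $u = \lim_{x\to 0}Y_W(u',x)\vac$-style — more directly, write $u = a_{-1}\vac_W$? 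No, $W$ has no vacuum. Instead: take any $\varphi \in \Hom_{V,W,L(-1)}(V,W)$ with $\varphi(\vac) = u$ (exists by the preceding Proposition), so $u = \varphi(\vac)$ and for $a \in V$, $L_W(m)\varphi(a) = L_W(m)a_{-1}u$; pushing $L_W(m)$ past $a_{-1} = \Res_x x^{-1} Y_W(a,x)$ via the commutator formula and using $L(k-1)\vac = 0$ for the relevant $k$ handles the general case by downward induction on... — actually the slickest finish is: $L_W(m)u = L_W(m)u_{-1}\vac$ is not meaningful; rather use that $Z(V)$ contains no nonzero elements outside $V_0$ is the algebra analogue, and here we transport via $\pi$: the element $u \in Z(W)$ corresponds to $\varphi_u \in \Hom_{V,W,L(-1)}(V,W)$, and one checks $L_W(m)\varphi_u(\vac) = \varphi_u(L(m)\vac)$ would need $\varphi_u$ to intertwine $Vir^+$, which follows because $W$ is a semiconformal module; since $L(m)\vac = 0$ for $m\geq -1$ in $V$ (as $V$ is semiconformal and $\vac$ is the vacuum), we get $L_W(m)u = 0$. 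I expect the main obstacle to be pinning down this last intertwining step cleanly — verifying that $\varphi_u$ (or equivalently the vacuum-like structure of $u$) is compatible with the full $Vir^+$-action on $W$, not just the $\mathfrak{sl}_2$-part — since the commutator formula only controls $[L_W(j), Y_W(v,x)]$ and one must be careful that $L(k-1)\vac$ genuinely vanishes for all the $k$ appearing when $j = m \geq 1$, which it does because $L(j)\vac = 0$ for $j \geq -1$ in any semiconformal-vertex algebra.
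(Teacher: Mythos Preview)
Your proposal has two genuine gaps.

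First, the $\mathfrak{sl}_2$ step does not yield $L_W(0)u=0$. With the paper's conventions $[L_W(0),L_W(-1)]=L_W(-1)$, so $L_W(-1)$ \emph{raises} the $L_W(0)$-weight and $u$ with $L_W(-1)u=0$ is a \emph{highest} weight vector, not a lowest one. Lower truncation then forces the $\mathfrak{sl}_2$-module generated by $u$ to be finite-dimensional, but that only gives $2\,\mathrm{wt}(u)\in\mathbb{Z}_{\ge 0}$, not $\mathrm{wt}(u)=0$; the two-dimensional irreducible already furnishes a counterexample with $\mathrm{wt}(u)=\tfrac12$ and $L_W(1)u\neq 0$. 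So pure $\mathfrak{sl}_2$ representation theory cannot produce $L_W(1)u=0$ or $L_W(0)u=0$.

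Second, the $\varphi_u$ route is circular. Using the commutator formula and the vacuum-like property of $u$ one finds, for any $a\in V$ and $m\ge -1$,
\[
L_W(m)\varphi_u(a)=\varphi_u(L(m)a)+a_{-1}\bigl(L_W(m)u\bigr),
\]
so $\varphi_u$ intertwines $L(m)$ \emph{iff} $L_W(m)u=0$, which is exactly the conclusion sought. Specializing $a=\vac$ gives the tautology $L_W(m)u=L_W(m)u$, since $Y_W(\vac,x)=1_W$ and $L(k-1)\vac=0$ for all $k\ge 0$.

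The argument the paper defers to (Dong--Mason, Lemma~2.2) avoids both problems and is shorter: reduce to $u$ homogeneous; by lower truncation $L_W(m)u=0$ for $m\gg 0$; then induct downward using the $Vir^+$ relation
\[
(m+2)\,L_W(m)u=[L_W(m+1),L_W(-1)]u=L_W(m+1)L_W(-1)u-L_W(-1)L_W(m+1)u=0,
\]
valid for every $m\ge -1$ since $m+2\neq 0$. This immediately gives $L_W(m)u=0$ for all $m\ge -1$, in particular $L_W(0)u=0$, so $u\in W_0$. The essential point you are missing is that one must use the commutators $[L_W(m+1),L_W(-1)]$ for \emph{all} $m\ge -1$, not just the $\mathfrak{sl}_2$ triple.
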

\begin{proof}
   The proof of this Lemma is exactly the same as that of Lemma 2.2 of \cite{DongMason2004}.
\end{proof}

\begin{thm}\label{invariant} Let $V$ be a semiconformal-vertex algebra. A linear functional $f$ on $V_0$ determines a symmetric invariant bilinear form by (\ref{sys1}), (\ref{sys2}) if and only if $L(1)V_1\subseteq Ker(f)$. Therefore, the space of all symmetric invariant bilinear forms on $V$ is isomorphic to the dual space of $V_0/L(1)V_1$.
\end{thm}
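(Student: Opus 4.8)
The plan is to mirror H. Li's argument from \cite{Li1994} for vertex operator algebras, adapting it to the semiconformal setting where the role of the conformal vector is replaced by the $Vir^+$-action and the operator $D = L(-1)$. The two directions are: (i) a symmetric invariant bilinear form $(\,,\,)$ produces, via \eqref{sys2}, a functional $f$ on $V_0$ with $L(1)V_1 \subseteq \mathrm{Ker}(f)$; and (ii) conversely, any such $f$ determines a symmetric invariant bilinear form through formula \eqref{sys1}. Direction (i) is essentially already observed in the excerpt just before the theorem statement: $({\bf 1}, L(1)V_1) = (L(-1){\bf 1}, V_1) = 0$ since $L(-1){\bf 1} = D{\bf 1} = 0$, so the content is entirely in direction (ii), together with checking that the assignment $f \mapsto (\,,\,)$ is the inverse of $(\,,\,)\mapsto f$, which gives the final isomorphism with $(V_0/L(1)V_1)^*$.

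For direction (ii), given $f$ with $L(1)V_1 \subseteq \mathrm{Ker}(f)$, I would first extend $f$ to a linear functional on all of $V$ by setting it to zero on $V_n$ for $n \neq 0$ (this is forced: the grading forces $(V_0, V_n) = 0$ for $n\neq 0$ by the $L(0)$-invariance $(L(0)u,v)=(u,L(0)v)$, so $({\bf 1}, v) = 0$ unless $v\in V_0$). The key construction is to realize $f$ as an element of $\mathrm{Hom}_{V,V',L(-1)}(V, V')$, where $V'$ is the contragredient module of $V$ regarded as a module over itself as a quasi vertex operator algebra. Concretely, one wants to produce a vacuum-like vector in $Z(V') = \{w' \in V' \mid L_{V'}(-1)w' = 0\}$ out of $f$; the condition $L(1)V_1 \subseteq \mathrm{Ker}(f)$ is exactly what is needed to verify $L_{V'}(-1)f = 0$, because by the definition of the contragredient action $\langle L_{V'}(-1)f, v\rangle = \langle f, L(1)v\rangle$ (after the appropriate grading shift / using the pairing conventions), and $L(1)v \in V_0$ only when $v \in V_1$. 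Then the Proposition on $\pi: \mathrm{Hom}_{V,M,L(-1)}(V,M) \xrightarrow{\sim} Z(M)$ (applied with $M = V'$) yields a $V$-homomorphism $\varphi_f: V \to V'$ with $\varphi_f({\bf 1}) = f$, and one defines $(u,v) := \langle \varphi_f(u), v\rangle$. Invariance of $(\,,\,)$ (the two defining identities) then follows from $\varphi_f$ being a $V$-homomorphism intertwining $L(-1)$, combined with the defining property of the contragredient vertex operators and the $L(n)$-intertwining for $n \in \{0,\pm 1\}$; the recovery of \eqref{sys1} is a residue computation using $Y(u,x){\bf 1} = e^{xD}u$ and $\varphi_f({\bf 1}) = f$.

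The place where the semiconformal hypothesis (as opposed to mere quasi-VOA structure) genuinely enters, and the step I expect to be the main obstacle, is establishing symmetry of $(\,,\,)$. In Li's VOA proof, symmetry comes from the skew-symmetry $Y(u,x)v = e^{xL(-1)}Y(v,-x)u$ together with the full Virasoro action, in particular using $L(2)$ or more precisely the identity relating $e^{xL(1)}(-x^{-2})^{L(0)}$-conjugation to its inverse — one needs enough of the $Vir^+$-action to run the standard manipulation showing $(u,v) = (v,u)$. Since a semiconformal-vertex algebra carries a representation of $Vir^+ = \mathrm{span}\{L(j) \mid j \geq -1\}$ with the full $[L(j), Y(v,x)]$ commutator formulas for all $j \geq -1$, this is available, and the argument should go through as in \cite{FrenkelHuangLepowsky} and \cite{Li1994}; but one must be careful that only $L(j)$ for $j\geq -1$ (not $L(-2)$, etc.) are used, and that the grading-truncation of $f$ is compatible with all the intertwining relations. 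Finally, to conclude the stated isomorphism, I would check that $f \mapsto (\,,\,)_f \mapsto ({\bf 1}, \cdot)$ recovers $f$ (immediate from $\varphi_f({\bf 1}) = f$ and \eqref{sys2}), and that distinct forms give distinct functionals (injectivity, which follows because $(\,,\,)$ is determined by $f$ via the $V$-homomorphism $\varphi_f$, itself determined by its value at ${\bf 1}$), so that $f \mapsto (\,,\,)_f$ is a linear bijection between $(V_0/L(1)V_1)^*$ and the space of symmetric invariant bilinear forms on $V$.
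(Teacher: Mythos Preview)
Your proposal is correct and follows essentially the same route as the paper's proof: identify $f$ with an element of $Z(V')$, invoke the isomorphism $\Hom_{V,V',L(-1)}(V,V')\cong Z(V')$ to produce $\varphi_f$, and define $(a,b)=\langle\varphi_f(a),b\rangle$. One small recalibration: symmetry already holds at the quasi vertex operator algebra level (this is the content of the Remark citing \cite{FrenkelHuangLepowsky}), so that is not where the semiconformal hypothesis bites; the paper instead uses it through Lemma~\ref{CenterW} to conclude that any $\lambda\in Z(V')$ automatically lies in $V_0'$, which is what yields the clean identification $Z(V')\cong (V_0/L(1)V_1)^*$ and hence the final isomorphism.
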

\begin{proof} The proof of this Theorem is almost identical to the proof of Theorem 3.1 in \cite{Li1994}. Using the fact that $\langle L_{V'}(-1)\lambda,u\rangle=\langle \lambda,L(1)u\rangle $ for any $\lambda\in V'$, $u\in V$, we can conclude that $\lambda\in Z(V')$ if and only if $\lambda(L(1)V)=0$. If $\lambda\in Z(V')$ then by Lemma \ref{CenterW}, $\lambda\in V_0'(=V_0^*)$. In fact, we can conclude further that $\lambda\in Z(V')$ if and only if $\lambda\in V_0'$ and $\lambda(L(1)V_1)=0$. 

    For any linear functional $f$ on $(V_0/L(1)V_1)^*$, we may and will consider $f$ as a member of $V'$. From the above paragraph, we can conclude that $f\in Z(V')$. Hence, we have a $V$-homomorphism $\varphi_f$ from $V$ to $V'$ such that $\varphi_f(a)=a_{-1}f$. Moreover, we have an invariant symmetric bilinear form defined by 
    $(a,b)=\langle \varphi_f(a),b\rangle$ for $a,b\in V.$ Note that $({\bf 1},b)=f(b)$ for $b\in V$. In addition, we can conclude that the space of all symmetric invariant bilinear forms on $V$ is isomorphic to the dual space of $V_0/L(1)V_1$.
\end{proof}

\section{On Algebraic Structure of $\mathbb{N}$-graded vertex algebras}\label{knownresultsonalgbraicstructuresection}

In this section, we explore the influence of vertex algebroids on $\mathbb{N}$-graded vertex algebras $V$. Specifically, we aim to identify the necessary conditions on vertex algebroids that cause $V$ to be non-semiconformal. We begin by recalling the definitions of 1-truncated conformal algebras and vertex algebroids. Subsequently, we provide the necessary background to establish Theorem \ref{Notsemisimple}, which serves as the main result of this section. This theorem asserts that for an $\mathbb{N}$-graded vertex algebra $V = \bigoplus_{n=0}^{\infty} V_n$, if $V_1$ contains a simple Lie algebra $sl_2$, then $V$ is not a quasi vertex operator algebra. Additionally, we have included a discussion on left Leibniz algebras in Section \ref{appendix} to support readers who may not be familiar with this subject.

\begin{dfn}\cite{GMS} A {\em 1-truncated conformal algebra} is a graded vector space $C=C_0\oplus C_1$ equipped with a linear map $\partial:C_0\rightarrow C_1$ and bilinear operations $(u,v)\mapsto u_iv$ for $i=0,1$ of degree $-i-1$ on $C=C_0\oplus C_1$ such that the following axioms hold:

\medskip

\noindent(Derivation) for $a\in C_0$, $u\in C_1$,
\begin{equation}
(\partial a)_0=0,\ \ (\partial a)_1=-a_0,\ \ \partial(u_0a)=u_0\partial a;
\end{equation}

\noindent(Commutativity) for $a\in C_0$, $u,v\in C_1$,
\begin{equation} 
u_0a=-a_0u,\ \ u_0v=-v_0u+\partial(u_1v),\ \ u_1v=v_1u;
\end{equation}

\noindent(Associativity) for $\alpha,\beta,\gamma\in C_0\oplus C_1$,
\begin{equation}
\alpha_0\beta_i\gamma=\beta_i\alpha_0\gamma+(\alpha_0\beta)_i\gamma.
\end{equation}
\end{dfn}
\begin{dfn} \cite{Br1, Br2, GMS} Let $(\mathcal{A},*)$ be a unital commutative associative algebra over $\mathbb{C}$ with the identity $1$. A {\em vertex $\mathcal{A}$-algebroid} is a $\mathbb{C}$-vector space $\Gamma$ equipped with 
\begin{enumerate}
\item a $\mathbb{C}$-bilinear map $\mathcal{A}\times \Gamma\rightarrow \Gamma, \ \ (a,v)\mapsto a\cdot v$ such that $1\cdot v=v$ (i.e. a nonassociative unital $\mathcal{A}$-module),
\item a structure of a left Leibniz $\mathbb{C}$-algebra $[~,~]:\Gamma\times \Gamma\rightarrow\Gamma$, 
\item a homomorphism of Leibniz $\mathbb{C}$-algebra $\pi:\Gamma\rightarrow Der(\mathcal{A})$,
\item a symmetric $\mathbb{C}$-bilinear pairing $\langle ~,~\rangle:\Gamma\otimes_{\mathbb{C}}\Gamma\rightarrow \mathcal{A}$,
\item a $\mathbb{C}$-linear map $\partial :\mathcal{A}\rightarrow \Gamma$ such that $\pi\circ \partial =0$ which satisfying the following conditions:
\begin{eqnarray*}
&&a\cdot (a'\cdot v)-(a*a')\cdot v=\pi(v)(a)\cdot \partial(a')+\pi(v)(a')\cdot \partial(a),\\
&&[u,a\cdot v]=\pi(u)(a)\cdot v+a\cdot [u,v],\\
&&[u,v]+[v,u]=\partial(\langle u,v\rangle),\\
&&\pi(a\cdot v)=a\pi(v),\\
&&\langle a\cdot u,v\rangle=a*\langle u,v\rangle-\pi(u)(\pi(v)(a)),\\
&&\pi(v)(\langle v_1,v_2\rangle)=\langle [v,v_1],v_2\rangle+\langle v_1,[v,v_2]\rangle,\\
&&\partial(a*a')=a\cdot \partial(a')+a'\cdot\partial(a),\\
&&[v,\partial(a)]=\partial(\pi(v)(a)),\\
&&\langle v,\partial(a)\rangle=\pi(v)(a)
\end{eqnarray*}
for $a,a'\in \mathcal{A}$, $u,v,v_1,v_2\in\Gamma$.
\end{enumerate}
\end{dfn}


\begin{prop}\cite{LiY} Let $(\mathcal{A},*)$ be a unital commutative associative algebra and let $\mathcal{B}$ be a module for $\mathcal{A}$ as a nonassociative algebra. Then a vertex $\mathcal{A}$-algebroid structure on $\mathcal{B}$ exactly amounts to a 1-truncated conformal algebra structure on $C=\mathcal{A}\oplus \mathcal{B}$ with 
\begin{eqnarray*}
&&a_ia'=0,\\
&&u_0v=[u,v],~u_1v=\langle u,v\rangle,\\
&&u_0a=\pi(u)(a),~ a_0u=-u_0a
\end{eqnarray*} for $a,a'\in \mathcal{A}$, $u,v\in \mathcal{B}$, $i=0,1$ such that 
\begin{eqnarray*}
&&a\cdot(a'\cdot u)-(a*a')\cdot u=(u_0a)\cdot \partial a'+(u_0a')\cdot \partial a,\\
&&u_0(a\cdot v)-a\cdot (u_0v)=(u_0a)\cdot v,\\
&&u_0(a*a')=a*(u_0a')+(u_0a)*a',\\
&&a_0(a'\cdot v)=a'*(a_0v),\\
&&(a\cdot u)_1v=a*(u_1v)-u_0v_0a,\\
&&\partial(a*a')=a\cdot \partial(a')+a'\cdot \partial(a).
\end{eqnarray*}
\end{prop}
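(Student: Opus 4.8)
The plan is to prove the equivalence by writing down two mutually inverse constructions and checking that each one translates one axiom system into the other. Given a vertex $\mathcal{A}$-algebroid structure $(\,\cdot\,,[\,,\,],\pi,\langle\,,\,\rangle,\partial)$ on $\mathcal{B}$, I would equip $C=\mathcal{A}\oplus\mathcal{B}$ with the same $\partial:C_0\to C_1$ and the degree $-i-1$ products prescribed in the statement: $a_ia'=0$, $u_0v=[u,v]$, $u_1v=\langle u,v\rangle$, $u_0a=\pi(u)(a)$, $a_0u=-u_0a$, with the remaining products $a_1u$, $u_1a$ forced to vanish by the grading. Conversely, from a $1$-truncated conformal algebra structure on $C=\mathcal{A}\oplus\mathcal{B}$ of this shape satisfying the six displayed compatibility identities, I would read off $[u,v]:=u_0v$, $\langle u,v\rangle:=u_1v$, $\pi(u)(a):=u_0a$, keeping $\partial$ and the $\mathcal{A}$-module structure. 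These two assignments are inverse by inspection, so the real content is that each side of the dictionary satisfies its own axioms.

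For the forward direction I would verify the three families of $1$-truncated conformal algebra axioms in turn. The Derivation axioms $(\partial a)_0=0$, $(\partial a)_1=-a_0$, $\partial(u_0a)=u_0\partial a$ follow, respectively, from $\pi\circ\partial=0$ (which kills $(\partial a)_0$ on $\mathcal{A}$) together with $[\partial a,u]=-[u,\partial a]+\partial\langle\partial a,u\rangle=0$ (using $[u,\partial a]=\partial(\pi(u)(a))$ and $\langle u,\partial a\rangle=\pi(u)(a)$), from $\langle u,\partial a\rangle=\pi(u)(a)=u_0a$, and from $[u,\partial a]=\partial(\pi(u)(a))$. The Commutativity axioms are immediate: $u_0a=-a_0u$ is the defining relation, $u_0v=-v_0u+\partial(u_1v)$ is skew-symmetry of $[\,,\,]$, and $u_1v=v_1u$ is symmetry of $\langle\,,\,\rangle$. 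The Associativity axiom $\alpha_0\beta_i\gamma=\beta_i\alpha_0\gamma+(\alpha_0\beta)_i\gamma$ is settled by a case analysis on the homogeneous degrees of $\alpha,\beta,\gamma\in C_0\oplus C_1$ and on $i\in\{0,1\}$: every case with $\deg\alpha+\deg\beta+\deg\gamma\leq 1$ when $i=0$ (or $\leq 2$ when $i=1$) is vacuous by the grading; the three cases with exactly one argument in $C_0$ and $i=0$ all collapse, after tracking the sign coming from $a_0u=-u_0a$, to the statement that $\pi$ is a homomorphism of Leibniz algebras; the case with all three arguments in $C_1$ and $i=0$ is the left Leibniz identity for $[\,,\,]$; and the case with all three in $C_1$ and $i=1$ is $\pi(v)(\langle v_1,v_2\rangle)=\langle[v,v_1],v_2\rangle+\langle v_1,[v,v_2]\rangle$. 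Finally, the six displayed ``such that'' identities are either literally vertex $\mathcal{A}$-algebroid axioms ($a\cdot(a'\cdot u)-(a*a')\cdot u=(u_0a)\cdot\partial a'+(u_0a')\cdot\partial a$; $u_0(a\cdot v)-a\cdot(u_0v)=(u_0a)\cdot v$; $u_0(a*a')=a*(u_0a')+(u_0a)*a'$; $(a\cdot u)_1v=a*(u_1v)-u_0v_0a$; $\partial(a*a')=a\cdot\partial a'+a'\cdot\partial a$) or follow at once from one of them together with Commutativity ($a_0(a'\cdot v)=a'*(a_0v)$ from $\pi(a'\cdot v)=a'\pi(v)$ and $a_0v=-v_0a$).

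For the converse direction I would run the same dictionary backward: the left Leibniz identity for $[\,,\,]$ and the fact that $\pi$ preserves brackets are the $(C_1,C_1,C_1,i{=}0)$ and one-argument-in-$C_0$ instances of Associativity; $\pi(u)$ is a derivation of $(\mathcal{A},*)$ by the displayed identity $u_0(a*a')=a*(u_0a')+(u_0a)*a'$; $\langle\,,\,\rangle$ is symmetric because $u_1v=v_1u$; $\pi\circ\partial=0$ is $(\partial a)_0=0$ evaluated on $\mathcal{A}$; and each of the remaining bulleted vertex $\mathcal{A}$-algebroid identities is recovered exactly as above --- e.g. $\langle u,\partial a\rangle=\pi(u)(a)$ from $(\partial a)_1=-a_0$ and $u_1(\partial a)=(\partial a)_1u$, $[v,\partial a]=\partial(\pi(v)(a))$ from $\partial(u_0a)=u_0\partial a$, and $\langle a\cdot u,v\rangle=a*\langle u,v\rangle-\pi(u)(\pi(v)(a))$ from $(a\cdot u)_1v=a*(u_1v)-u_0v_0a$. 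Checking that the two constructions are mutually inverse is then just unwinding definitions, since $u_0v$, $u_1v$, $u_0a$, $a_0u$, $a_ia'$, $\partial$ and the $\mathcal{A}$-action determine, and are determined by, $[\,,\,]$, $\langle\,,\,\rangle$, $\pi$, $\partial$ and the $\mathcal{A}$-action.

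I expect the only real obstacle to be organizing the Associativity verification cleanly: there are $2^3\times 2$ sign-sensitive cases to sort, and the one place a careless computation could go wrong is seeing that the relation $a_0u=-u_0a$ makes the three ``mixed-degree'' cases collapse to the single identity that $\pi$ is a Leibniz homomorphism. Everything else amounts to a direct line-by-line translation between the two lists of axioms.
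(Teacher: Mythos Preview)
The paper does not supply a proof of this proposition: it is quoted verbatim from \cite{LiY} and stated without argument, so there is no ``paper's own proof'' to compare against. Your proposal is the natural direct verification, and it is correct; the only work is the bookkeeping you already anticipated in the Associativity case analysis, and your handling of the sign from $a_0u=-u_0a$ is right (all three mixed-degree $i=0$ cases do collapse to $\pi([u,v])=[\pi(u),\pi(v)]$).
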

\begin{ex}\cite{GMS}
    Let $V=\bigoplus_{n=0}^{\infty}V_n$ be an $\mathbb{N}$-graded vertex algebra. Then 
    \begin{enumerate}
        \item $V_0$ is a unital commutative associative algebra under $_{-1}$th mode.
        \item $V_0\oplus V_1$ is a 1-truncated conformal algebra.
        \item $V_1$ is a vertex $V_0$-algebroid.
    \end{enumerate}
\end{ex}


Now, we let $\mathcal{A}$ be a finite-dimensional unital commutative associative algebra with the identity $1_{\mathcal{A}}$ and let $\mathcal{B}$ be a finite-dimensional vertex $\mathcal{A}$-algebroid. By the Levi-decomposition $\mathcal{B}$ is a semidirect product of a semisimple Lie algebra $\mathcal{S}$ and $rad(\mathcal{B})$, the solvable radical of the left Leibniz algebra $\mathcal{B}$ (cf. \cite{Ba2}). Recall that $Leib(\mathcal{B})=Span\{u_0u~|~u\in \mathcal{B}\}$. Since $Leib(\mathcal{B})$ is an abelian Lie algebra, we then have that $Leib(\mathcal{B})\subseteq rad(\mathcal{B})$. Now, let us assume that $\mathcal{S}\neq 0$. Then there exist $e,f,h\in \mathcal{B}$ such that $e_0f=h$, $h_0e=2e$, $h_0f=-2f$, $h_0h=0$ and $Span\{e,f,h\}$ is a Lie algebra that is isomorphic to $sl_2$. Since $\mathbb{C}1_{\mathcal{A}}$ is an $sl_2$-module, it implies that there exists an $sl_2$-submodule $N$ of $\mathcal{A}$ such that
$\mathcal{A}=\mathbb{C}1_{\mathcal{A}}\oplus N$. 

We set $N\cdot \mathcal{B}=Span\{n\cdot b|n\in N,b\in \mathcal{B}\}$.

\begin{prop}\cite{BuY} Let $(\mathcal{A},*)$ be a finite dimensional commutative associative algebra with the identity $1_{\mathcal{A}}$ such that $\dim \mathcal{A}\geq 2$. Let $\mathcal{B}$ be a finite-dimensional vertex $\mathcal{A}$-algebroid that is not a Lie algebra. Assume that 
\begin{enumerate}
    \item there exist $e,f,h\in \mathcal{B}$ such that $e_0f=h$, $h_0e=2e$, $h_0f=-2f$, $h_0h=0$, $Span\{e,f,h\}$ is a Lie algebra that is isomorphic to $sl_2$.
    \item $\mathcal{A}=\mathbb{C}1_{\mathcal{A}}\oplus N$ and $N\cdot \mathcal{B}\subseteq \partial(\mathcal{A})$. Here $N$ is an $sl_2$-module. 
 
\begin{enumerate}
    \item If $\dim N=1$, then there exists $a\in \mathcal{A}\backslash\{0\}$ such that $a*a=0$. In addition $\mathcal{A}$ is a trivial $sl_2$-module.
    \item If $e_1f=k1_{\mathcal{A}}$ such that $k\neq 0$, then $\dim N\geq 2$.
\end{enumerate}
    \end{enumerate}
\end{prop}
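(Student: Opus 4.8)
The plan is to work inside the $1$-truncated conformal algebra $C=\mathcal{A}\oplus\mathcal{B}$, using the compatibility identities recorded above together with two standing observations: $\mathcal{B}$ is a Lie algebra if and only if $Leib(\mathcal{B})=0$, and, since commutativity gives $[u,u]=u_0u=\frac12\partial\langle u,u\rangle$, one has $Leib(\mathcal{B})\subseteq\partial(\mathcal{A})$, so $Leib(\mathcal{B})\neq 0$ forces $\partial\not\equiv 0$ on $\mathcal{A}$. I would first settle the easy half of (a): each $\pi(x)$ is a derivation, so $\pi(x)(1_{\mathcal{A}})=0$ and $\mathbb{C}1_{\mathcal{A}}$ is a trivial $sl_2$-submodule; a one-dimensional module over the perfect Lie algebra $sl_2$ is trivial, hence so is $N$. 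Thus $\mathcal{A}=\mathbb{C}1_{\mathcal{A}}\oplus N$ is a trivial $sl_2$-module, i.e.\ $x_0a=\pi(x)(a)=0$ for all $x\in Span\{e,f,h\}$, $a\in\mathcal{A}$.

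For the square-zero element in (a), note that $\dim N=1$ forces $\dim\mathcal{A}=2$. The key lemma I would isolate is that $\partial(p)=0$ for every idempotent $p\in\mathcal{A}$. First, $\pi(v)(p)=\pi(v)(p\ast p)=2\,p\ast\pi(v)(p)$ gives $(1_{\mathcal{A}}-2p)\ast\pi(v)(p)=0$; since $1_{\mathcal{A}}-2p$ squares to $1_{\mathcal{A}}$ it is a unit, so $\pi(v)(p)=0$ for all $v\in\mathcal{B}$. Then the associativity identity $a\cdot(a'\cdot v)-(a\ast a')\cdot v=(v_0a)\cdot\partial a'+(v_0a')\cdot\partial a$ at $a=a'=p$ collapses to $p\cdot(p\cdot v)=p\cdot v$, so the operator $P$ on $\mathcal{B}$ given by $P(v)=p\cdot v$ is idempotent; meanwhile $\partial(p\ast p)=2\,p\cdot\partial p$ says $P(\partial p)=\frac12\partial p$, and applying $P$ once more yields $\frac14\partial p=\frac12\partial p$, hence $\partial p=0$. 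Now if $\mathcal{A}$ had a nontrivial idempotent $p$, then $\{1_{\mathcal{A}},p\}$ would be a basis of $\mathcal{A}$ and $\partial(\mathcal{A})=\mathbb{C}\partial(1_{\mathcal{A}})+\mathbb{C}\partial(p)=0$, forcing $Leib(\mathcal{B})=0$ and contradicting that $\mathcal{B}$ is not a Lie algebra. So $\mathcal{A}$ has only the trivial idempotents, hence is local; being two-dimensional with residue field $\mathbb{C}$, its maximal ideal $\mathfrak{m}$ is one-dimensional and $\mathfrak{m}^2=0$ by Nakayama, so any nonzero $a\in\mathfrak{m}$ satisfies $a\ast a=0$. (Only $\dim\mathcal{A}=2$ and the non-Lie hypothesis enter in this part.)

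For (b), suppose $\dim N\le 1$; since $\dim\mathcal{A}\ge 2$ this means $\dim N=1$, so part (a) applies and $\mathcal{A}$ is a trivial $sl_2$-module with $N=\mathbb{C}n$, $n\neq 0$. Then $n\cdot e\in N\cdot\mathcal{B}\subseteq\partial(\mathcal{A})$, and $h_0$ acts as $0$ on $\partial(\mathcal{A})$ because $[h,\partial a]=\partial(\pi(h)(a))=0$ by triviality; but by the associativity identity and $h_0n=\pi(h)(n)=0$ we also have $h_0(n\cdot e)=n\cdot(h_0e)+(h_0n)\cdot e=2(n\cdot e)$, so $n\cdot e=0$. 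Feeding this into $(a\cdot u)_1v=a\ast(u_1v)-u_0v_0a$ with $a=n$, $u=e$, $v=f$, and using $f_0n=\pi(f)(n)=0$ (so that $e_0f_0n=0$), gives $0=(n\cdot e)_1f=n\ast(e_1f)=n\ast(k1_{\mathcal{A}})=kn$, whence $n=0$, a contradiction. Therefore $\dim N\ge 2$. The main obstacle is the idempotent lemma $\partial(p)=0$ (equivalently, the fact that no non-Lie vertex $\mathcal{A}$-algebroid sits over a product $\mathbb{C}\times\mathbb{C}$); once that and the $sl_2$-triviality of $\mathcal{A}$ are in place, both parts are direct manipulations of the defining identities.
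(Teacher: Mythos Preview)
Your argument is correct. Note, however, that the paper does not supply its own proof of this proposition: it is quoted from \cite{BuY} without argument, so there is nothing to compare against on the level of method. Let me therefore simply confirm the steps.

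The triviality of $\mathcal{A}$ as an $sl_2$-module in (a) is immediate, as you say: $\pi(x)\in Der(\mathcal{A})$ kills $1_{\mathcal{A}}$, and a one-dimensional module over a perfect Lie algebra is trivial. Your idempotent lemma is the real content of (a). The two ingredients---$(1_{\mathcal{A}}-2p)$ is a unit so $\pi(v)(p)=0$, and then the nonassociativity defect vanishes so $P(v)=p\cdot v$ is an honest idempotent operator on $\mathcal{B}$ while simultaneously $P(\partial p)=\tfrac12\partial p$---combine cleanly to give $\partial p=0$. From there the dichotomy is forced: either $\mathcal{A}\cong\mathbb{C}\times\mathbb{C}$ (then $\partial\equiv 0$ and $\mathcal{B}$ is Lie) or $\mathcal{A}\cong\mathbb{C}[x]/(x^2)$. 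This is a nice, conceptual way to phrase the obstruction.

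For (b) your weight argument is exactly right: once $\mathcal{A}$ is $sl_2$-trivial, $\partial(\mathcal{A})$ sits in $h$-weight $0$, while $n\cdot e$ has $h$-weight $2$ by $u_0(a\cdot v)=(u_0a)\cdot v+a\cdot(u_0v)$; the hypothesis $N\cdot\mathcal{B}\subseteq\partial(\mathcal{A})$ then forces $n\cdot e=0$, and the identity $(a\cdot u)_1v=a*(u_1v)-u_0v_0a$ finishes it. One small remark: in (b) you implicitly use that part (a) applies, which in turn uses that $\mathcal{B}$ is not a Lie algebra; this hypothesis is indeed global in the proposition, so the appeal is legitimate.
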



\begin{prop}\label{semisimpleLeibniz}\cite{JY} Let $(\mathcal{A},*,1_{\mathcal{A}})$ be a unital commutative associative algebra such that $\dim \mathcal{A}<\infty$. Let $\mathcal{B}$ be a vertex $\mathcal{A}$-algebroid such that $\dim \mathcal{B}<\infty$. Assume that 
\begin{enumerate} 
\item $\mathcal{B}$ is semisimple Leibniz algebra such that $Leib(\mathcal{B})\neq \{0\}$, and $Ker(\partial)=\{a\in \mathcal{A}~|~u_0a=0\text{ for all }u\in \mathcal{B}\}$; 
\item the Levi-factor $\mathcal{S}=Span\{e,f,h\}$ such that $e_0f=h$, $h_0e=2e$, $h_0f=-2f$ and $e_1f=k1_{\mathcal{A}}(\neq 0)$. We set $\mathcal{A}=\mathbb{C}1_{\mathcal{A}}\bigoplus(\bigoplus_{j=1}^lN^j)$ where each $N^j$ is an irreducible $sl_2$-submodule of $\mathcal{A}$. 
\end{enumerate} Then we have the following
\begin{enumerate}
    \item $e_1e=f_1f=e_1h=f_1h=0$, $k=1$, $h_1h=21_{\mathcal{A}}$;
    \item $Ker(\partial)=\mathbb{C}1_{\mathcal{A}}$;
    \item For $j\in\{1,...,l\}$, $\dim N^j=2$ and $\dim Leib(\mathcal{B})=2l$; 
    \item $\mathcal{A}$ is a local algebra. For each $j$, we let $a_{j,0}$ be a highest weight vector of $N^j$ and $a_{j,1}=f_0(a_{j,0})$. Then $\{1_{\mathcal{A}},a_{j,i}~|~j\in\{1,...,l\}, i\in\{0,1\}\}$ is a basis of $\mathcal{A}$, and $\{\partial(a_{j,i})~|~j\in\{1,...,l\},i\in \{0,1\}\}$ is a basis of $Lieb(\mathcal{B})$.
\end{enumerate} 
Relations among $a_{j,i},e,f,h,\partial(a_{j,i})$ are described below:
\begin{eqnarray}
&&a_{j,i}*a_{j',i'}=0,\label{rel1}\\
&&a_{j,0}\cdot e=0,~a_{j,1}\cdot e=\partial(a_{j,0}),\\
&&a_{j,0}\cdot f=\partial(a_{j,1}),~a_{j,1}\cdot f=0,\\
&&a_{j,0}\cdot h=\partial(a_{j,0}),~a_{j,1}\cdot h=-\partial(a_{j,1}),\\
&&a_{j,i}\cdot \partial(a_{j',i'})=0,\label{rel5}\\
&&\partial(a_{j,i})_1e=e_0a_{j,i}=(2-i)a_{j,i-1},\\
&&\partial(a_{j,i})_1f=f_0a_{j,i}=(i+1)a_{j,i+1},\\
&&\partial(a_{j,i})_1h=h_0a_{j,i}=(1-2i)a_{j,i}.\label{rel8}
\end{eqnarray}

\end{prop}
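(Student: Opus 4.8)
The plan is to carry out the whole analysis inside the $1$-truncated conformal algebra $C=\mathcal{A}\oplus\mathcal{B}$ of Proposition~\cite{LiY}, where $[u,v]=u_0v$, $\langle u,v\rangle=u_1v$, $u_0a=\pi(u)(a)$, and to exploit that $\mathcal{A}$ is a finite-dimensional $sl_2$-module via $a\mapsto u_0a$ (since $\pi$ is a Leibniz homomorphism and $\mathcal{S}\cong sl_2$); by complete reducibility write $\mathcal{A}=\mathbb{C}1_{\mathcal{A}}\oplus\bigoplus_{j=1}^{l}N^{j}$ with the $N^{j}$ irreducible. First I would normalize $\partial$ and $Leib(\mathcal{B})$. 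From $2\,u_0u=\partial\langle u,u\rangle$ we get $Leib(\mathcal{B})\subseteq\partial(\mathcal{A})$; and $\partial(\mathcal{A})$ is an abelian ideal of $\mathcal{B}$ (its elements act trivially on the left, as one sees by combining $[v,\partial a]=\partial(\pi(v)(a))$, $\langle v,\partial a\rangle=\pi(v)(a)$ and $[u,v]+[v,u]=\partial\langle u,v\rangle$), so it sits inside the solvable radical, which by the semisimplicity assumption equals $Leib(\mathcal{B})$; hence $Leib(\mathcal{B})=\partial(\mathcal{A})=rad(\mathcal{B})$ and $\mathcal{B}=\mathcal{S}\oplus\partial(\mathcal{A})$ as vector spaces. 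Since $\pi\circ\partial=0$, this ideal acts trivially on $\mathcal{A}$, so $\mathcal{A}^{\mathcal{B}}=\mathcal{A}^{sl_2}$ and, by hypothesis~(1), $Ker(\partial)=\mathcal{A}^{sl_2}$; moreover $\pi|_{\mathcal{S}}$ is injective ($\mathcal{A}$ is a faithful module for the simple algebra $\mathcal{S}$, as $l\geq1$ because $Leib(\mathcal{B})\neq0$), so $Ker(\pi)=Leib(\mathcal{B})$ and $\partial$ is $sl_2$-equivariant. Then, since every pair in $\mathcal{S}$ has $[u,v]+[v,u]=0$, we get $\langle u,v\rangle\in Ker(\partial)=\mathcal{A}^{sl_2}$ for $u,v\in\mathcal{S}$; $sl_2$-equivariance forces the nonzero-weight components to vanish, giving $\langle e,e\rangle=\langle f,f\rangle=\langle e,h\rangle=\langle f,h\rangle=0$, and the associativity axiom $e_0(h_1f)=h_1(e_0f)+(e_0h)_1f$ with $h_1f=0$ gives $\langle h,h\rangle=2\langle e,f\rangle=2k1_{\mathcal{A}}$. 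This is Conclusion~(1) up to the value of $k$.

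The engine for the rest is a two-way evaluation. For $a\in\mathcal{A}$ and $v\in\mathcal{S}$, compute $\langle a\cdot e,v\rangle$ once from the identity $(a\cdot e)_1v=a*(e_1v)-e_0v_0a$, and once by writing $a\cdot e=(\text{component in }\mathcal{S})+\partial\xi$ and using $\langle\partial\xi,v\rangle=\pi(v)(\xi)$ together with the values just found; matching the two expressions—and using $\langle e,h\rangle=0$ and $e_1f=k1_{\mathcal{A}}\neq0$ to eliminate the $\mathcal{S}$-part of $a\cdot e$—yields rigid identities. Feeding in $a$ equal to a generator of a hypothetical trivial summand $N^{j}$ forces that generator into $\mathbb{C}1_{\mathcal{A}}$, a contradiction; hence no $N^{j}$ is trivial, i.e. $Ker(\partial)=\mathbb{C}1_{\mathcal{A}}$ (Conclusion~(2)). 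Feeding in $a=f_0^{2}a_{j,0}$ for a highest-weight vector $a_{j,0}$ of $N^{j}$, the two values of $\langle a\cdot e,e\rangle$ turn out to disagree unless $\dim N^{j}=2$; so every $N^{j}$ is the $2$-dimensional irreducible. Once the maximal $h_0$-weight on $\mathcal{A}$ is $1$, the element $a_{j,0}\cdot e$ has weight $3$, which does not occur in $\mathcal{B}$, so $a_{j,0}\cdot e=0$, and then $(a_{j,0}\cdot e)_1f=k\,a_{j,0}-e_0f_0a_{j,0}=(k-1)a_{j,0}$ forces $k=1$, completing Conclusion~(1).

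The remaining statements are bookkeeping. With all $h_0$-weights on $\mathcal{A}$ in $\{-1,0,1\}$ and the weight-$0$ space equal to $\mathbb{C}1_{\mathcal{A}}$, the products $a_{j,0}*a_{j',0}$ and $a_{j,1}*a_{j',1}$ vanish for weight reasons, and the weight-$0$ products $a_{j,0}*a_{j',1}\in\mathbb{C}1_{\mathcal{A}}$ vanish by associativity of $*$ together with $f_0^{2}a_{j',0}=0$; so $N*N=0$ (relation~(\ref{rel1})), whence $\mathcal{A}=\mathbb{C}1_{\mathcal{A}}\oplus N$ is local with maximal ideal the square-zero ideal $N$. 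Each $a_{j,i}\cdot\partial(a_{j',i'})$ lies in $Ker(\pi)=\partial(\mathcal{A})$ by the axiom $\pi(a\cdot v)=a\,\pi(v)$ and $N*N=0$, while its weight puts it in $\mathcal{S}\oplus\mathbb{C}h$; since $\mathcal{S}\cap\partial(\mathcal{A})=0$ it vanishes (relation~(\ref{rel5})). Because $Ker(\partial)=\mathbb{C}1_{\mathcal{A}}$ and $\partial(\mathcal{A})=Leib(\mathcal{B})$, the sets $\{1_{\mathcal{A}},a_{j,i}\}$ and $\{\partial(a_{j,i})\}$ are bases of $\mathcal{A}$ and $Leib(\mathcal{B})$, so $\dim Leib(\mathcal{B})=2l$. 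Finally relation~(\ref{rel8}) and its analogues are immediate from $\langle\partial a_{j,i},x\rangle=\langle x,\partial a_{j,i}\rangle=\pi(x)(a_{j,i})$ and the explicit $sl_2$-action on $N^{j}$, and the remaining products $a_{j,i}\cdot x$ ($x\in\{e,f,h\}$) are obtained by the same two-way evaluation, now knowing $a_{j,i}\cdot x\in\partial(\mathcal{A})$ and reading it off from its pairings with $e,f,h$.

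I expect the real obstacle to be the equality $\dim N^{j}=2$. The two-way evaluation of $\langle f_0^{2}a_{j,0}\cdot e,e\rangle$ is clean exactly when the weight of $f_0^{2}a_{j,0}\cdot e$ forbids an $\mathcal{S}$-component, but for highest weights $m_j\in\{2,4\}$ this weight is $0$ or $2$ and an $\mathcal{S}$-component is a priori allowed; eliminating it—by first pairing $f_0^{2}a_{j,0}\cdot e$ against $h$, using $\langle e,h\rangle=0$, and invoking $e_1f=k1_{\mathcal{A}}\neq0$—is the step that must be handled separately before the weight comparison delivers the contradiction $a_{j,0}=0$.
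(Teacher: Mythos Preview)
The paper does not prove this proposition; it is quoted from \cite{JY} and used as a black box. So there is no ``paper's proof'' to compare against, only your sketch to assess on its own merits.

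Your overall architecture is sound: the identifications $Leib(\mathcal{B})=\partial(\mathcal{A})=rad(\mathcal{B})$, $Ker(\partial)=\mathcal{A}^{sl_2}$, the weight argument giving $e_1e=f_1f=e_1h=f_1h=0$ and $h_1h=2k1_{\mathcal{A}}$, and your proof of Conclusion~(2) (pair $a\cdot e$ against $h$ to kill the $\partial$-part, then against $f$ to force $a\in\mathbb{C}1_{\mathcal{A}}$) are all correct. Likewise, the bookkeeping in your third paragraph is fine once $m=1$ is known.

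The genuine gap is exactly the step you flagged, but it is worse than you think. Your proposed contradiction from $\langle (f_0^{2}a_{j,0})\cdot e,\,e\rangle$ does not materialise. When the $\mathcal{S}$-component vanishes you have $(f_0^{2}a_{j,0})\cdot e=\partial\eta$ with $\eta$ of weight $m_j-2$, and then
\[
((f_0^{2}a_{j,0})\cdot e)_1e \;=\; e_0\eta,
\]
an element of weight $m_j$. The direct computation gives $-(2m_j-2)m_j\,a_{j,0}$, also of weight $m_j$. There is no inconsistency: $e_0$ maps the weight-$(m_j-2)$ space onto the weight-$m_j$ space, so the equation $e_0\eta=-(2m_j-2)m_j\,a_{j,0}$ simply determines (part of) $\eta$. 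No contradiction arises, even after you eliminate the $\mathcal{S}$-component for $m_j\in\{2,4\}$.

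A repair that stays within your framework: once you know all $m_j\ge 1$ and (by your own weight argument on $a_{j,0}\cdot e$) that $k=m:=\max_j m_j=\min_j m_j$, evaluate $a_{j,1}\cdot h$ instead. It has weight $m-2$, and pairing it against $h$ and against $e$ (and against $f$ in the borderline case $m=4$) yields two independent linear conditions on the same coefficient; for $m\neq 2,4$ these force $(m-2)(-m)=2m-(m-2)^{2}$, i.e. $m=1$, while $m=2$ and $m=4$ give $a_{j,1}=0$ or $a_{j,2}=0$ directly, a contradiction. After that your derivation of $k=1$ from $a_{j,0}\cdot e=0$ goes through verbatim.
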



\begin{thm}\label{Notsemisimple}
    Let $V=\bigoplus_{n=0}^{\infty}V_n$ be an $\mathbb{N}$-graded vertex algebra such that $V_1$ contains a simple Lie algebra $\mathfrak{s}$ that is isomorphic to $sl_2$. Then $V_0=\mathbb{C}{\bf 1}\oplus N$. Here, $N$ is an $sl_2$-module. Assume that 
    \begin{enumerate}
        \item there exist $u,v\in \mathfrak{s}$ such that $u_1v\neq 0$;
        \item $Ker(\partial)=\mathbb{C}{\bf 1}$;
        \item $N\cdot \mathfrak{s}=Span\{a\cdot s~|~a\in N, ~s\in\mathfrak{s}\}\subseteq \mathfrak{s}+\partial(V_0)$. Here $\partial:V_0\rightarrow V_1$ is a linear map defined by $\partial(a)=D(a)=a_{-1}{\bf 1}$.
    \end{enumerate} Then $V$ is not a quasi vertex operator algebra. 
\end{thm}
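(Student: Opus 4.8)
The plan is to derive a contradiction by assuming $V$ \emph{is} a quasi vertex operator algebra and then showing that the hypotheses force relations incompatible with the $\mathfrak{sl}(2)$-representation theory on $V$. The starting observation is that if $V$ is a quasi vertex operator algebra then it carries an action of $\mathfrak{sl}(2)=\mathrm{span}\{L(-1),L(0),L(1)\}$ with $L(-1)=D$; in particular $L(1):V_1\to V_0$ and $L(0)$ acts on $V_n$ as multiplication by $n$. So I first want to understand how $L(1)$ interacts with the copy of $\mathfrak{s}\cong\mathfrak{sl}(2)$ sitting inside $V_1$. Writing $\mathfrak{s}=\mathrm{span}\{e,f,h\}$ with $e_0f=h$, $h_0e=2e$, $h_0f=-2f$, I would use the bracket formula $[L(1),Y(v,x)]=\sum_{k=0}^{2}\binom{2}{k}x^{2-k}Y(L(k-1)v,x)$ applied to $v\in\mathfrak{s}$ and extract the appropriate coefficient to relate $L(1)(u_0 v)$, $u_0(L(1)v)$, $(L(1)u)_0 v$ and the term $(L(0)u)_\bullet v$; since $L(0)u=u$ for $u\in V_1$ this produces commutation relations showing that $L(1)$ applied to $\mathfrak{s}$ lands in $V_0$ and behaves like an $\mathfrak{sl}(2)$-module map composed with the adjoint action. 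The upshot I expect is that $a:=L(1)e$, $b:=L(1)h$, $c:=L(1)f$ satisfy $h_0 a$, $h_0 b$, $h_0 c$ prescribed by weights $2,0,-2$, i.e.\ they span (a quotient of) the adjoint module inside $V_0$, and moreover that these are tied to the pairing $u_1v$ on $\mathfrak{s}$ via the $1$-truncated conformal algebra axioms.

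Next I would exploit hypothesis (1), that $u_1v\ne 0$ for some $u,v\in\mathfrak{s}$. By commutativity $u_1v=v_1u$ is a symmetric $\mathfrak{s}$-invariant form on the adjoint module, hence proportional to the Killing form; nonvanishing means it is nondegenerate, so (after rescaling) $e_1f=\mathbf{1}$, $h_1h=2\mathbf{1}$, $e_1e=f_1f=e_1h=f_1h=0$, exactly as in Proposition~\ref{semisimpleLeibniz}(1). Now combine this with the associativity/derivation axioms of the $1$-truncated conformal algebra $V_0\oplus V_1$ and with the relation $\partial(u_1v)=u_0v+v_0u$ (from Commutativity) together with $(\partial a)_1=-a_0$: I want to compute $\partial(\mathbf 1)=\partial(e_1f)$ and compare. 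Since $\partial=D$ and $D(\mathbf 1)=0$, we get $e_0f+f_0e=0$, i.e.\ $h=e_0f=-f_0e$, which is consistent, so the contradiction must come from the $L(1)$-side. The key computation is to evaluate $L(1)$ on $h=e_0f$: using the bracket formula twice (or once, carefully) I expect to obtain an identity expressing $L(1)h$ in terms of $e_1f=\mathbf 1$, something like $L(1)h=2\,e_1f=2\mathbf 1$ up to the normalization — more precisely $L(1)(e_0 f) = e_0 L(1)f + (L(1)e)_0 f + 2 e_1 f$ or a similar combination — forcing $L(1)h$ to have a nonzero $\mathbb C\mathbf 1$-component.

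Then I bring in hypotheses (2) and (3). Hypothesis (3) says $N\cdot\mathfrak{s}\subseteq\mathfrak{s}+\partial(V_0)=\mathfrak s+D(V_0)$, and hypothesis (2) says $\ker\partial=\mathbb C\mathbf 1$, i.e.\ $D$ is injective on $N$. The strategy is: from the previous step the elements $L(1)e,L(1)h,L(1)f\in V_0$ form an $\mathfrak{sl}(2)$-submodule of $V_0=\mathbb C\mathbf 1\oplus N$ isomorphic to (a quotient of) the adjoint representation, and $L(1)h$ has nonzero $\mathbf 1$-component; but the adjoint representation of $\mathfrak{sl}(2)$ is irreducible (3-dimensional, $\mathbf 1$ is a trivial submodule), so if this submodule is nonzero it must be \emph{either} all of a complement contained in $N$ \emph{or} it must be a summand not meeting $\mathbb C\mathbf1$ at all — contradicting that $L(1)h\notin N$. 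The remaining possibility is $L(1)e=L(1)h=L(1)f=0$, i.e.\ $\mathfrak s\subseteq\ker L(1)$; but then running the $L(1)$-bracket identity on $h=e_0f$ again gives $0=L(1)h=2\,e_1f=2\mathbf 1\ne 0$ directly, a contradiction. (Hypothesis (3) together with $\ker\partial=\mathbb C\mathbf 1$ is what I would use to rule out the intermediate cases where $L(1)$ hits $\mathfrak s$ itself rather than $V_0$; the module-theoretic bookkeeping there — tracking weights under $h_0$ and using that $D$ is injective on $N$ — is where I'd be most careful.)

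I expect the \textbf{main obstacle} to be the precise bookkeeping in the $L(1)$-bracket computation: extracting the right coefficient of $x$ in $[L(1),Y(e,x)]Y(f,x_2)\cdots$ so as to correctly identify $L(1)(e_0f)$ in terms of $e_0(L(1)f)$, $(L(1)e)_0 f$, and the $e_1 f$ correction term, and making sure the normalization of the invariant form $u_1 v$ is tracked consistently through all the $1$-truncated-conformal-algebra axioms. Once that identity is pinned down, the contradiction with hypotheses (2) and (3) via $\mathfrak{sl}(2)$-module irreducibility is short.
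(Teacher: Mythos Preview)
Your central computation is wrong, and the whole strategy collapses with it. For $u,v\in V_1$ in a quasi vertex operator algebra one has
\[
[L(1),Y(u,x)] = x^{2}Y(L(-1)u,x) + 2xY(L(0)u,x) + Y(L(1)u,x),
\]
and extracting the coefficient of $x^{-1}$ applied to $v$ gives
\[
L(1)(u_0v) - u_0L(1)v = (L(-1)u)_2v + 2u_1v + (L(1)u)_0v.
\]
Since $(L(-1)u)_2 = -2u_1$, the $u_1v$ terms \emph{cancel}, so the correct identity is simply
\[
L(1)(u_0v) = (L(1)u)_0v + u_0L(1)v,
\]
with no $2e_1f$ correction. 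Hence your proposed contradiction ``$0=L(1)h=2e_1f=2\mathbf{1}$'' never materializes: having $L(1)e=L(1)f=L(1)h=0$ is perfectly consistent with this bracket identity. The representation-theoretic argument you sketch (adjoint module inside $V_0$, irreducibility forcing $L(1)\mathfrak{s}\subseteq N$ or $=0$) also rests on a nonzero $\mathbf{1}$-component in $L(1)h$, so it fails for the same reason.

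The paper's argument proceeds quite differently. First it uses hypotheses (1)--(3) together with Proposition~\ref{semisimpleLeibniz} to pin down $V_0$ completely: there is a basis $\{\mathbf{1},a_{j,0},a_{j,1}\}_{1\le j\le l}$ with explicit products, and in particular $\partial(a_{j,0})=(a_{j,1})_{-1}e$. Then, assuming $V$ is quasi, the paper uses the (correct) identity above to compute $L(1)h=L(1)(e_0f)$ and $2L(1)e=L(1)(h_0e)$ in these coordinates and solve the resulting linear system, obtaining $L(1)e=0$. The contradiction comes from a \emph{different} bracket identity you never touch: for $a\in V_0$, $b\in V_1$,
\[
L(1)(a_{-1}b) = a_{-1}L(1)b - a_0b.
\]
Applying this with $a=a_{j,1}$, $b=e$, and using $L(1)L(-1)a_{j,0}=2L(0)a_{j,0}=0$, one gets
\[
0 = L(1)\partial(a_{j,0}) = L(1)\bigl((a_{j,1})_{-1}e\bigr) = (a_{j,1})_{-1}L(1)e - (a_{j,1})_0e = 0 + e_0a_{j,1} = a_{j,0},
\]
contradicting that $a_{j,0}$ is a basis element. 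So the essential ingredients you are missing are (i) the explicit structure of $V_0$ from Proposition~\ref{semisimpleLeibniz}, and (ii) the formula for $L(1)$ on $a_{-1}b$ with $a\in V_0$, which is where the genuine obstruction lives.
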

\begin{proof} Assume that $Ker(\partial)=\mathbb{C}{\bf 1}$ and there exist $u,v\in\mathfrak{s}$ such that $u_1v\neq 0$. Since $\mathfrak{s}$ is a simple Lie algebra, we then have that $\partial(b_1b')=0$ for all $b,b'\in\mathfrak{s}$. Because $Ker(\partial)=\mathbb{C}{\bf 1}$, we have $b_1b\in\mathbb{C}{\bf 1}$. Now, we let $(~,~):\mathfrak{s}\times\mathfrak{s}\rightarrow\mathbb{C}$ be a bilinear form such that for $b,b'\in \mathfrak{s}$, $(b,b'){\bf 1}=b_1b'$. Clearly, $(~,~)$ is symmetric. Observe that for $b,b',b''\in \mathfrak{s}$, $b_1(b'_0b'')=b'_0(b_1b'')-(b'_0b)_1b''=-(b'_0b)_1b''=(b_0b')_1b''$. Hence $(b,b'_0b'')=(b_0b',b'')$ and $(~,~)$ is invariant. Now, we set $U=\{w\in\mathfrak{s}~|~(w,s)=0\text{ for all }s\in\mathfrak{s}\}$. Notice that for $b\in \mathfrak{s}$, $w\in U$, we have $(b_0w,s)=-(w_0b,s)=-(w,b_0s)=0$ for all $s\in\mathfrak{s}$. Hence, $U$ is an ideal of $\mathfrak{s}$. Since $u_1v\neq 0$, this implies that $(u,v){\bf 1}=u_1v\neq 0$ and $U\neq \mathfrak{s}$. Since $\mathfrak{s}$ is a simple Lie algebra, we can conclude that $U=\{0\}$. Therefore, $(~,~)$ is a non-degenerate invariant bilinear form, and $(~,~)$ is a non-zero scalar multiple of the Killing form of $\mathfrak{s}$.

For simplicity, we suppose that $\mathfrak{s}=Span\{e,f,h\}$ such that $e_0f=h$, $h_0e=2e$, $h_0f=-2f$. Since $(~,~)$ is a nonzero scalar multiple of the Killing form of $\mathfrak{s}$, we then have that $e_1f=k{\bf 1}$ where $k\neq 0$. Since $N\cdot \mathfrak{s}\subseteq \mathfrak{s}+\partial(V_0)$, we can conclude that $\mathfrak{s}+\partial(V_0)$ is a vertex $V_0$-algebroid. Moreover, $\mathfrak{s}+\partial(V_0)$ is a semisimple Leibniz algebra. By Proposition \ref{semisimpleLeibniz}, there exists a positive integer $l$ such that $\{{\bf 1},a_{j,i}~|~j\in\{1,...,l\}, i\in\{0,1\}\}$ is a basis of $V_0$. Here, $N=\bigoplus_{j=1}^lN^j$ where each $N^j$ is an irreducible $sl_2$-module. For each $j$, $a_{j,0}$ is a highest weight vector of $N^j$ and $a_{j,1}=f_0(a_{j,0})$ with properties 
(\ref{rel1}) to (\ref{rel8}) in Proposition \ref{semisimpleLeibniz}. In addition, as commutative associative algebras, $$V_0\cong\mathbb{C}[x_i,y_j|1\leq i,j\leq l]/(x_ix_j, y_iy_j,x_iy_j~|~1\leq i,j\leq l).$$ 

Suppose that $V$ is a quasi vertex operator algebra. We set 
\begin{eqnarray*}
L(1)h&=&\alpha_0{\bf 1}+\sum_{j=1}^l (\alpha_{j,1} a_{j,1}+\alpha_{j,0} a_{j,0}),\\
L(1)e&=&\beta_0{\bf 1}+\sum_{j=1}^l(\beta_{j,1} a_{j,1}+\beta_{j,0} a_{j,0}),\\
L(1)f&=&\gamma_0{\bf 1}+\sum_{j=1}^l(\gamma_{j,1} a_{j,1}+\gamma_{j,0} a_{j,0}).
\end{eqnarray*}
Recall that for $u,v\in V_1$, we have $L(1)(u_0v)=(L(1)u)_0v+u_0L(1)v$. Because
\begin{eqnarray*}
   L(1)(h)&=& L(1)(e_0f)\\
   &=&(L(1)e)_0f+e_0L(1)f\\
  &=&(\beta_0{\bf 1}+\sum_{j=1}^l(\beta_{j,1} a_{j,1}+\beta_{j,0} a_{j,0}) )_0f+e_0( \gamma_0{\bf 1}+\sum_{j=1}^l(\gamma_{j,1} a_{j,1}+\gamma_{j,0} a_{j,0}))\\
   &=&-\sum_{j=1}^l\beta_{j,0} f_0a_{j,0}+\sum_{j=1}^l\gamma_{j,1} e_0a_{j,1}\\
   &=&-\sum_{j=1}^l\beta_{j,0} a_{j,1}+\sum_{j=1}^l\gamma_{j,1} a_{j,0},
\end{eqnarray*}
we then have that $\alpha_0=0$, $\alpha_{j,1}=-\beta_{j,0}$, $\alpha_{j,0}=\gamma_{j,1}$ for all $j\in\{1,...,l\}$. Similarly, since  
\begin{eqnarray*}
   2L(1)e&=&L(1)(h_0e)\\
   &=&(L(1)h)_0e+h_0L(1)e\\
   &=&\sum_{j=1}^l (-\beta_{j,0} (a_{j,1})_0e+\gamma_{j,1} (a_{j,0})_0e)+h_0(\beta_0{\bf 1}+\sum_{j=1}^l(\beta_{j,1} a_{j,1}+\beta_{j,0} a_{j,0}))\\
   &=&\sum_{j=1}^l (\beta_{j,0} e_0(a_{j,1})-\gamma_{j,1} e_0(a_{j,0}))+\sum_{j=1}^l(\beta_{j,1} h_0a_{j,1}+\beta_{j,0} h_0a_{j,0})\\
   &=&\sum_{j=1}^l\beta_{j,0} a_{j,0}+\sum_{j=1}^l(-\beta_{j,1} a_{j,1}+\beta_{j,0} a_{j,0})\\
    &=&-\sum_{j=1}^l\beta_{j,1} a_{j,1},
\end{eqnarray*}
we then have that $2(\beta_0{\bf 1}+\sum_{j=1}^l(\beta_{j,1} a_{j,1}+\beta_{j,0} a_{j,0}))=-\sum_{j=1}^l\beta_{j,1} a_{j,1}.$ Consequently, we have $\beta_0=0$, $\beta_{j,i}=0$ for all $1\leq j\leq l$, $i\in\{0,1\}$ and $L(1)e=0$.

Recall that for $a\in \mathcal{A},b\in \mathcal{B}$, we have $L(1)(a_{-1} b)=a_{-1}L(1)b-a_0b$.
Hence, for $1\leq j\leq l$, we have
$0=L(1)L(-1)a_{j,0}=L(1)\partial(a_{j,0})=L(1)((a_{j,1})_{-1} e)=(a_{j,1})_{-1}L(1)e-(a_{j,1})_0e=a_{j,0}$. This implies that $V_0=\mathbb{C}{\bf 1}$ which is a contradiction. Hence, $V$ is not a quasi vertex operator algebra.

\end{proof}

\section{On Indecomposable $\mathbb{N}$-graded semiconformal-vertex algebras associated with Gorenstein Algebras}\label{vertexalgebrasgorensteinring}

In this section, we delve into the intricate relationships between Gorenstein algebras, vertex algebroids, and $\mathbb{N}$-graded vertex algebras $V = \bigoplus_{n=0}^{\infty} V_n$. We begin by demonstrating that when the vertex $V_0$-algebroid $V_1$ contains a simple Lie algebra and satisfies certain specific conditions, $V_0$ cannot be a graded Gorenstein algebra (see Theorem \ref{notGorenstein}). We then show that if $V_0$ is Gorenstein and $V_1$ meets appropriate criteria, $V_1$ is a solvable left-Leibniz algebra (see Theorem \ref{V1solvable}). Motivated by the work in \cite{Mason2014}, our analysis continues with a deeper examination of the algebraic structure of the $\mathbb{N}$-graded vertex algebra $V$ when $V_0$ is Gorenstein. By leveraging the Poincar\'{e} duality property of $V_0$, we investigate a particular type of invariant bilinear form on $V_1$, leading us to identify conditions on $V_1$ that generate a rank-one Heisenberg vertex operator algebra, which is embedded within $V$ as a vertex subalgebra. We have included supplementary material in Section \ref{appendix} for readers unfamiliar with Gorenstein algebras.

Let $V=\bigoplus_{n=0}^{\infty}V_n$ be an $\mathbb{N}$-graded vertex algebra such that $\dim V_0<\infty$ and $V_0$ is a local algebra with a maximal ideal $\mathfrak{m}$. Hence, $\mathfrak{m}$ is a Jacobson radical of $V_0$. By Proposition \ref{annihilatorofV_0}, the socle of $V_0$, denoted $soc(V_0)$, is the annihilator of $\mathfrak{m}$. Now, we assume that $V_0=\bigoplus_{d=0}^sV_0^d$ is a graded Gorenstein algebra. Hence, $soc(V_0)$ is a simple $V_0$-module, and $soc(V_0)\cong V_0/\mathfrak{m}$. Moreover, by Proposition \ref{poincareduality}, $V_0$ is a Poincar\'{e}
 duality algebra, and $V_0^s=soc(V_0)=\mathbb{C}t=Ann_{V_0}(\mathfrak{m})$ for some $t\in\mathfrak{m}$. Note that $V_0^0=\mathbb{C}{\bf 1}$ and $\mathbb{C}t$ is the unique minimal ideal of $V_0$. For simplicity, we will use $B(~,~)$ to denote the nondegenerate symmetric invariant bilinear form associated with the Poincar\'{e} duality of $V_0$. So, we have that $B({\bf 1},t)\neq 0$.

\begin{thm}\label{notGorenstein}
    Let $V=\bigoplus_{n=0}^{\infty}V_n$ be an $\mathbb{N}$-graded vertex algebra such that $Ker(D|_{V_0})=\mathbb{C}{\bf 1}$. Assume that $V_1$ contains a simple Lie algebra $\mathfrak{s}$ such that 
    \begin{enumerate}
        \item $\mathfrak{s}+D(V_0)$ is a vertex $V_0$-algebroid and $Leib(\mathfrak{s}+D(V_0))=D(V_0)$.
        \item There exist $u,v\in \mathfrak{s}$ such that $u_1v\neq 0$.
    \end{enumerate} Then $V_0$ is not Gorenstein.
\end{thm}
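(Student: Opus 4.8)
The plan is to leverage Proposition~\ref{semisimpleLeibniz} to extract an explicit description of $V_0$ as a commutative associative algebra, and then show that this description is incompatible with the Gorenstein (equivalently, by the Appendix, Poincar\'e duality) property. First I would verify the hypotheses of Proposition~\ref{semisimpleLeibniz} for the vertex $V_0$-algebroid $\mathcal{B} = \mathfrak{s} + D(V_0)$: condition (1) of the present theorem gives that $\mathcal{B}$ is a vertex $V_0$-algebroid with $Leib(\mathcal{B}) = D(V_0) \neq \{0\}$ (it is nonzero since $Ker(D|_{V_0}) = \mathbb{C}\mathbf{1}$ forces $\dim V_0 \geq 2$, using also that $\mathfrak{s} \neq 0$ supplies elements outside $\mathbb{C}\mathbf{1}$ in $V_1$ and $N \neq 0$), and $\mathcal{B}$ is a semisimple Leibniz algebra with Levi factor $\mathfrak{s}$; condition (2) together with the argument already run at the start of the proof of Theorem~\ref{Notsemisimple} (the invariant symmetric bilinear form $(b,b')\mathbf{1} = b_1 b'$ on $\mathfrak{s}$ is nondegenerate, hence a scalar multiple of the Killing form) upgrades $u_1 v \neq 0$ to $e_1 f = k\mathbf{1}$ with $k \neq 0$ after choosing an $\mathfrak{sl}_2$-triple $\{e,f,h\}$ in $\mathfrak{s}$. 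The hypothesis $Ker(\partial) = \mathbb{C}\mathbf{1}$ needed in Proposition~\ref{semisimpleLeibniz} is exactly $Ker(D|_{V_0}) = \mathbb{C}\mathbf{1}$.

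Once Proposition~\ref{semisimpleLeibniz} applies, part (4) of that proposition (and the isomorphism displayed in the proof of Theorem~\ref{Notsemisimple}) gives
$$V_0 \cong \mathbb{C}[x_i, y_j \mid 1 \leq i,j \leq l] / (x_i x_j,\ y_i y_j,\ x_i y_j \mid 1 \leq i,j \leq l)$$
for some positive integer $l$, with basis $\{\mathbf{1}, a_{j,0}, a_{j,1}\}$ and all pairwise products $a_{j,i} * a_{j',i'} = 0$ (relation~(\ref{rel1})). The decisive observation is then purely commutative-algebraic: in this algebra the maximal ideal $\mathfrak{m} = \mathrm{span}\{a_{j,i}\}$ satisfies $\mathfrak{m}^2 = 0$, so every element of $\mathfrak{m}$ annihilates $\mathfrak{m}$, i.e. $soc(V_0) = Ann_{V_0}(\mathfrak{m}) = \mathfrak{m}$, which has dimension $2l \geq 2$. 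But if $V_0$ were Gorenstein, then by Proposition~\ref{poincareduality} (quoted in the paragraph preceding the theorem) $soc(V_0)$ would be a \emph{simple} $V_0$-module, forcing $\dim soc(V_0) = 1$ and hence $2l = 1$, a contradiction. Therefore $V_0$ is not Gorenstein.

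The main obstacle is entirely in the first paragraph: carefully checking that $\mathfrak{s} + D(V_0)$ really is a finite-dimensional vertex $V_0$-algebroid that is a \emph{semisimple} Leibniz algebra with the prescribed Levi factor, and that the normalization $e_1 f = k\mathbf{1}$, $k\neq 0$, can be arranged — so that Proposition~\ref{semisimpleLeibniz} is genuinely available. This is where hypotheses (1) and (2) do their work, and it mirrors the opening moves of the proof of Theorem~\ref{Notsemisimple}; I would reuse that argument essentially verbatim. After that, the contradiction with the Gorenstein property is immediate from the structure of $V_0$, since $\mathfrak{m}^2 = 0$ makes the socle as large as possible rather than one-dimensional. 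One should also note $\dim V_0 < \infty$ is implicitly needed to invoke the socle/Gorenstein machinery of the Appendix; this follows once we know $V_0$ has the finite basis above, but if one prefers it can be added as a standing hypothesis consistent with the paragraph introducing this section.
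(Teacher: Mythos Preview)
Your proposal is correct and follows essentially the same route as the paper: reuse the opening argument of Theorem~\ref{Notsemisimple} to get that $(\,\cdot\,,\,\cdot\,)$ is a nonzero multiple of the Killing form on $\mathfrak{s}$, observe that $\mathfrak{s}+D(V_0)$ is a semisimple Leibniz vertex $V_0$-algebroid, and then invoke Proposition~\ref{semisimpleLeibniz} to pin down $V_0$ as $\mathbb{C}[x_1,\dots,x_n]/(x_ix_j)$. The paper simply asserts ``therefore $V_0$ is not Gorenstein'' at the end, whereas you spell this out via the socle computation $soc(V_0)=\mathfrak m$ with $\dim\mathfrak m=2l\geq 2$; that extra sentence is a welcome clarification but not a different argument.
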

\begin{proof} By following the proof in Theorem \ref{Notsemisimple}, we can conclude that the bilinear map $(~,~):\mathfrak{s}\times\mathfrak{s}\rightarrow\mathbb{C}$ defined by $(b,b'){\bf 1}=b_1b$ is non-degenerate and a non-zero scalar multiple of the Killing form. Notice that $\mathfrak{s}+ D(V_0)$ is a vertex $V_0$-algebroid that is a semisimple Leibniz algebra. By Proposition \ref{semisimpleLeibniz}, there exists a positive integer $n$ such that $V_0\cong \mathbb{C}[x_1,...,x_n]/(x_ix_j~|~1\leq i,j\leq n)$. Therefore, $V_0$ is not a Gorenstein algebra. \end{proof}
\begin{lem}\label{propmt} Let $V=\bigoplus_{n=0}^{\infty}V_n$ be an $\mathbb{N}$-graded quasi vertex operator algebra such that $V_0=\bigoplus_{d=0}^sV_0^d$ is a graded Gorenstein algebra with the unique maximal ideal $\mathfrak{m}$ and $\dim V_0<\infty$. Now, we set $\mathfrak{a}=Span\{v_0a~|~v\in V_1,a\in \mathfrak{m}\}$. Then the following statements hold
\begin{enumerate}
\item $\mathfrak{a}$ an ideal of $V_0$; 
\item For $a\in \mathfrak{m}$, $v\in V_1$, $v_0a\in\mathfrak{m}$ if and only if $L(1)(a_{-1}v)\in\mathfrak{m}$. In addition, $v_0t\in\mathbb{C}t$ if and only if $L(1)(t_{-1}v)\in\mathbb{C}t$.
\item If $\mathfrak{a}\neq V_0$, then $v_0t\in\mathbb{C}t$ for all $v\in V_1$.
\end{enumerate}
\end{lem}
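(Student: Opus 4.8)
The plan is to treat the three statements in order, exploiting the formula $L(1)(a_{-1}v)=a_{-1}L(1)v-a_0v=a_{-1}L(1)v-(-v_0a)=a_{-1}L(1)v+v_0a$ (from the vertex $V_0$-algebroid relation $L(1)(a\cdot v)=a\cdot L(1)v-a_0v$ together with commutativity $a_0v=-v_0a$), which pins the obstruction to membership in an ideal entirely on the single term $v_0a$ modulo $\mathfrak m\cdot V_1$-type corrections living in $\mathfrak m$.

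For (1), I would verify that $\mathfrak a=\mathrm{Span}\{v_0a\mid v\in V_1,\ a\in\mathfrak m\}$ is closed under multiplication by $V_0$: for $b\in V_0$, the associativity axiom of the 1-truncated conformal algebra (equivalently the vertex $V_0$-algebroid identity $v_0(b\cdot a)=b\cdot(v_0a)+(v_0b)\cdot a$ with $b\cdot a=b_{-1}a=b*a$) gives $b_{-1}(v_0a)=v_0(b_{-1}a)-(v_0b)_{-1}a\in\mathfrak a$, since $b*a\in\mathfrak m$ when $a\in\mathfrak m$ (as $\mathfrak m$ is an ideal) and the second term is again of the form $u_0a'$ with $a'=a\in\mathfrak m$, $u=v_0b\in V_1$. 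Hence $\mathfrak a$ is an ideal of $V_0$; one should also note $\mathfrak a\subseteq\mathfrak m$ since $V_0/\mathfrak m\cong\mathbb C$ forces $v_0a\equiv 0\bmod\mathfrak m$ (the action of $V_1$ on the one-dimensional quotient is by derivations, hence trivial) — this last observation is not strictly needed for (1) but is the crux of (3).

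For (2), apply the displayed identity $L(1)(a_{-1}v)=a_{-1}L(1)v+v_0a$. Since $a\in\mathfrak m$ and $\mathfrak m$ is an ideal, $a_{-1}L(1)v=a*L(1)v\in\mathfrak m$; therefore $L(1)(a_{-1}v)\in\mathfrak m\iff v_0a\in\mathfrak m$. For the second assertion, take $a=t$; here $t\in\mathfrak m$ so again $t_{-1}L(1)v=t*L(1)v\in\mathbb C t$ because $\mathbb C t=soc(V_0)=Ann_{V_0}(\mathfrak m)$ is the unique minimal ideal, so $t*w\in\mathbb C t$ for any $w\in V_0$ (indeed $t*w\in\mathbb C t$ as $\mathbb C t$ is an ideal, and it lies in $soc$); thus $L(1)(t_{-1}v)=t_{-1}L(1)v+v_0t\in\mathbb C t\iff v_0t\in\mathbb C t$.

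For (3), suppose $\mathfrak a\neq V_0$. Since $\mathfrak a$ is an ideal of the local ring $V_0$ and is proper, $\mathfrak a\subseteq\mathfrak m$. Now use Poincar\'e duality: $\mathbb C t=V_0^s=soc(V_0)$, and the nondegenerate symmetric invariant bilinear form $B$ on $V_0$ pairs $V_0^d$ with $V_0^{s-d}$. Given $v\in V_1$, write $v_0t\in V_0$ and decompose it; I claim $v_0t\in\mathbb C t$. Consider $B(v_0t, b)$ for arbitrary $b\in V_0$. Using invariance of $B$ under the $V_1$-action (the derivation $v_0$ is $B$-skew, which follows from $\pi(v)$ being a derivation compatible with the Poincar\'e form — this is where I may need the invariance hypothesis on $B$), $B(v_0t,b)=-B(t,v_0b)$. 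Since $t\in soc(V_0)$ and $B$ restricted to $soc(V_0)\times\mathbb C{\bf 1}$ is the only nonzero pairing into the top, $B(t,v_0b)\neq 0$ would require $v_0b$ to have a nonzero $\mathbb C{\bf 1}$-component; but $v_0b\in\mathfrak a\subseteq\mathfrak m$ has zero $\mathbb C{\bf 1}$-component. Hence $B(v_0t,b)=0$ for all $b\notin\mathfrak m$-complement except $b\in\mathbb C t$... more carefully: $B(v_0t,b)=-B(t,v_0b)=0$ whenever $v_0b\in\mathfrak m$, which holds for all $b\in V_0$ since $\mathfrak a\subseteq\mathfrak m$. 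So $B(v_0t, V_0)\subseteq B(t, \mathfrak m)$-image... This shows $v_0t$ is $B$-orthogonal to everything except possibly needs $v_0t\perp$ all of $V_0$ would force $v_0t=0$; instead I should pair against the grading: $v_0t\in V_0$ and we want its components in $V_0^d$ for $d<s$ to vanish, equivalently $B(v_0t,V_0^{s-d})=0$ for $d<s$, i.e.\ $B(v_0t,\mathfrak m)=0$ (since $\bigoplus_{d<s}V_0^d\supseteq$... rather $V_0^{s-d}$ for $d<s$ ranges over $V_0^{>0}\subseteq\mathfrak m$, plus we separately need $B(v_0t,{\bf 1})$ which detects the top component $V_0^s=\mathbb C t$). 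For $b\in\mathfrak m$: $B(v_0t,b)=-B(t,v_0b)$, and $v_0b\in\mathfrak a\subseteq\mathfrak m$ with $B(t,\mathfrak m)=0$ because $t\in V_0^s$ pairs nontrivially only with $V_0^0=\mathbb C{\bf 1}\not\subseteq\mathfrak m$. Hence $B(v_0t,\mathfrak m)=0$, so $v_0t$ has vanishing components in all $V_0^d$ with $d<s$, i.e.\ $v_0t\in V_0^s=\mathbb C t$. \textbf{The main obstacle} I anticipate is justifying the $B$-skewness of the operator $u\mapsto v_0 u$ on $V_0$ (invariance of the Poincar\'e form under the vertex algebroid action); if that fails one must instead argue directly that a graded derivation of a graded Poincar\'e duality algebra necessarily preserves the socle, which is true but needs the derivation property applied to a product $b\cdot t'$ realizing the socle together with $\mathfrak a\subseteq\mathfrak m$ to kill all lower-degree output.
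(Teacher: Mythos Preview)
Your argument for part (2) is essentially the paper's. The issues are in (1) and (3).

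\textbf{Part (1).} Your formula $b_{-1}(v_0a)=v_0(b_{-1}a)-(v_0b)_{-1}a$ is correct, but the claim that the second term ``is again of the form $u_0a'$ with $u=v_0b\in V_1$'' is false: for $v\in V_1$ and $b\in V_0$ one has $v_0b\in V_0$ (weight count $1+0-0-1=0$), so $(v_0b)_{-1}a=(v_0b)*a$ is merely a product in $V_0$, not an element of $\mathfrak a$. Your decomposition therefore does not close. The paper uses the cleaner identity $\alpha_{-1}(v_0a)=-\alpha_{-1}(a_0v)=-a_0(\alpha_{-1}v)=(\alpha_{-1}v)_0a$, which lands directly in $\mathfrak a$ because $\alpha_{-1}v\in V_1$. (Equivalently, from the iterate formula $(\alpha_{-1}v)_0a=\alpha_{-1}(v_0a)$ since $\alpha_ia=0$ for $i\geq 0$ and $v_ia=0$ for $i\geq 1$.)

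\textbf{Part (3).} Your route via the Poincar\'e form $B$ does work, but you left the key step unproved. The skewness you need, $B(v_0t,b)=-B(t,v_0b)$ for $b\in\mathfrak m$, follows from the derivation identity $v_0(t*b)=(v_0t)*b+t*(v_0b)$ together with $t*b=0$ (since $t\in\mathrm{Ann}_{V_0}(\mathfrak m)$), giving $B(v_0t,b)+B(t,v_0b)=B({\bf 1},(v_0t)*b+t*(v_0b))=B({\bf 1},v_0(t*b))=0$. With this, your pairing argument goes through. The paper's argument is more direct and avoids $B$ altogether: for $a\in\mathfrak m$ one computes $a_{-1}(v_0t)=v_0(a_{-1}t)+(a_0v)_{-1}t$; the first term vanishes since $a*t=0$, and the second vanishes since $a_0v=-v_0a\in\mathfrak a\subseteq\mathfrak m$ and $t$ annihilates $\mathfrak m$. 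Hence $v_0t\in\mathrm{Ann}_{V_0}(\mathfrak m)=\mathbb C t$. This is the same derivation identity you used for skewness, applied one step earlier.
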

\begin{proof} Let $\alpha\in V_0$. We have $\alpha_{-1}(v_0a)=-\alpha_{-1}a_0v=-a_0(\alpha_{-1}v)\in\mathfrak{a}$. Hence, $\mathfrak{a}$ is an ideal of $V_0$. This proves (1). 

Now, we recall that for $a'\in V_0$, $b\in V_1$, $L(1)(a'_{-1}b)=a_{-1}L(1)b+b_0a'$. Hence, for $a\in \mathfrak{m}$, $v\in V_1$, $L(1)(a_{-1}v)\in \mathfrak{m}$ if and only if  $v_0a\in\mathfrak{m}$. 
Because $V_0=\mathbb{C}{\bf 1}\oplus\mathfrak{m}$, we then have that  $t_{-1}L(1)v\in\mathbb{C}t$. In addition, $v_0t\in\mathbb{C}t$ if and only if $L(1)(t_{-1}v)\in\mathbb{C}t$. This proves (2). 

Since $\mathfrak{a}\subseteq \mathfrak{m}$, we have that for $a\in\mathfrak{m}$, $v\in V_1$, we have $a_{-1}v_0t=v_0a_{-1}t+(a_0v)_{-1}t=0$. Therefore, $v_0t$ annihilate $a$ for all $a\in\mathfrak{m}$, and $v_0t\in Ann_{V_0}(\mathfrak{m})$. Hence, $v_0t\in\mathbb{C}t$. \end{proof}

\begin{thm}\label{V1solvable}
Let $V=\bigoplus_{n=0}^{\infty}V_n$ be an $\mathbb{N}$-graded vertex algebra such that $V_0$ is a Gorenstein algebra with the unique maximal ideal $\mathfrak{m}$ and $Ker(D|_{V_0})=\mathbb{C}{\bf 1}$. If every subalgebra of the left-Leibniz algebra $V_1$ is closed under $\mathfrak{m}$, and $Leib(V_1)=D(V_0)$, then $V_1$ is a solvable Leibniz algebra.
\end{thm}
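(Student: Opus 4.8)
The plan is a proof by contradiction: suppose $V_1$ is \emph{not} solvable. Apply the Levi decomposition to the finite-dimensional left Leibniz algebra $V_1$, writing $V_1=\mathcal{S}\oplus rad(V_1)$ with $\mathcal{S}\neq 0$ a semisimple Lie subalgebra whose simple ideals are $\mathfrak{s}_1,\dots,\mathfrak{s}_k$ (recall $Leib(V_1)\subseteq rad(V_1)$, hence $D(V_0)\subseteq rad(V_1)$). The core of the argument is to show that $\mathcal{S}$ must interact trivially with $V_0$. First I would prove $\mathfrak{m}\cdot\mathfrak{s}_j=0$ for each $j$: since $\mathfrak{s}_j$ is a Lie subalgebra of $V_1$, by hypothesis it is closed under $\mathfrak{m}$, so $\mathfrak{m}\cdot\mathfrak{s}_j\subseteq\mathfrak{s}_j$; using $[u,a\cdot v]=\pi(u)(a)\cdot v+a\cdot[u,v]$ together with the fact that every derivation of the local ring $V_0$ carries $V_0$ into $\mathfrak{m}$, one checks that $\mathfrak{m}\cdot\mathfrak{s}_j$ is stable under the adjoint action of $\mathfrak{s}_j$; irreducibility of the adjoint representation of the simple Lie algebra $\mathfrak{s}_j$ forces $\mathfrak{m}\cdot\mathfrak{s}_j\in\{0,\mathfrak{s}_j\}$, and the case $\mathfrak{m}\cdot\mathfrak{s}_j=\mathfrak{s}_j$ is impossible because iterating the nonassociativity identity $a\cdot(a'\cdot v)-(a*a')\cdot v\in\mathfrak{m}\cdot D(V_0)$ and using nilpotency of $\mathfrak{m}$ would give $\mathfrak{s}_j\subseteq D(V_0)\subseteq rad(V_1)$, contradicting $\mathfrak{s}_j\cap rad(V_1)=0$. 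Hence $\mathfrak{m}\cdot\mathcal{S}=0$.

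From $\mathfrak{m}\cdot\mathcal{S}=0$ I would next deduce that $\mathcal{S}$ acts trivially on $V_0$ and pairs trivially with itself. For $u,v\in\mathcal{S}$ one has $\partial\langle u,v\rangle=[u,v]+[v,u]=0$, so $\langle u,v\rangle\in Ker(D|_{V_0})=\mathbb{C}{\bf 1}$; writing $\langle u,v\rangle=\lambda(u,v){\bf 1}$ and substituting $a\cdot u=0$ ($a\in\mathfrak{m}$) into $\langle a\cdot u,v\rangle=a*\langle u,v\rangle-\pi(u)(\pi(v)(a))$ gives $\pi(u)(\pi(v)(a))=\lambda(u,v)a$ on $\mathfrak{m}$; antisymmetrizing yields $\pi([u,v])=0$ on $\mathfrak{m}$, hence on $V_0$, and since $\mathcal{S}=[\mathcal{S},\mathcal{S}]$ this forces $\pi(\mathcal{S})=0$, whence $\lambda(u,v)a=0$ for all $a\in\mathfrak{m}\neq 0$ and so $\langle\cdot,\cdot\rangle|_{\mathcal{S}\times\mathcal{S}}=0$. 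Using that $V_0$ is Gorenstein one gets slightly more: for $u\in\mathcal{S}$, $v\in V_1$, $a\in\mathfrak{m}$, the identity $\langle a\cdot u,v\rangle=a*\langle u,v\rangle$ (here $\pi(u)=0$) together with $a\cdot u=0$ gives $a*\langle u,v\rangle=0$, so $\langle u,v\rangle\in Ann_{V_0}(\mathfrak{m})=soc(V_0)=\mathbb{C}t$; thus $\langle\mathcal{S},V_1\rangle\subseteq\mathbb{C}t$.

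Finally I would reach a contradiction with $Leib(V_1)=D(V_0)$. Writing a general element of $V_1$ as $s+r$ with $s\in\mathcal{S}$, $r\in rad(V_1)$, we get $(s+r)_0(s+r)=\partial\langle s,r\rangle+r_0r\in\mathbb{C}D(t)+Leib(rad(V_1))$, so $D(V_0)=Leib(V_1)\subseteq\mathbb{C}D(t)+Leib(rad(V_1))$; since $rad(V_1)$ is a proper, solvable vertex $V_0$-subalgebroid all of whose subalgebras are closed under $\mathfrak{m}$, I would argue (via $Leib(rad(V_1))=\partial(\langle rad(V_1),rad(V_1)\rangle)$, the $\mathfrak{m}$-stability of its subalgebras, and the Poincar\'e-duality structure of $V_0$) that $\mathbb{C}D(t)+Leib(rad(V_1))$ is a proper subspace of $D(V_0)=\partial(V_0)$, contradicting $\dim V_0\geq 2$. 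The first two paragraphs are bookkeeping with the $1$-truncated conformal algebra axioms, irreducibility of adjoint representations, and the annihilator description of the socle of a Gorenstein algebra; the expected main obstacle is this last step — pinning down precisely why $Leib(rad(V_1))$ together with $\mathbb{C}D(t)$ cannot exhaust all of $D(V_0)$, i.e., why the presence of the semisimple part $\mathcal{S}$ leaves $D(V_0)=Leib(V_1)$ underdetermined. (Note this route, rather than a direct appeal to Theorem \ref{notGorenstein}, seems unavoidable, since the argument above shows $\langle\cdot,\cdot\rangle|_{\mathcal{S}}=0$, so hypothesis (2) of that theorem fails here.)
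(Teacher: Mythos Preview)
Your approach diverges from the paper's and contains two genuine gaps.

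First, the claim that ``every derivation of the local ring $V_0$ carries $V_0$ into $\mathfrak{m}$'' is false: on $V_0=\mathbb{C}[x]/(x^2)$ the derivation $\partial/\partial x$ sends $x\in\mathfrak{m}$ to $1\notin\mathfrak{m}$. Hence $\mathfrak{m}\cdot\mathfrak{s}_j$ need not be an $\mathfrak{s}_j$-submodule of $\mathfrak{s}_j$, and your dichotomy $\mathfrak{m}\cdot\mathfrak{s}_j\in\{0,\mathfrak{s}_j\}$ is unjustified. (Your ruling out of the case $\mathfrak{m}\cdot\mathfrak{s}_j=\mathfrak{s}_j$ via nilpotency of $\mathfrak{m}$ is fine, but you can no longer conclude the only alternative is $0$.) This collapses the second paragraph, whose input is precisely $\mathfrak{m}\cdot\mathcal{S}=0$.

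Second, even granting the first two paragraphs, the final contradiction is not there. Since $u_0u=\tfrac12\partial(u_1u)$, the condition $Leib(V_1)=D(V_0)$ amounts to $\langle V_1,V_1\rangle+\mathbb{C}{\bf 1}=V_0$; your reduction would require $\langle rad(V_1),rad(V_1)\rangle+\mathbb{C}t+\mathbb{C}{\bf 1}\subsetneq V_0$. Nothing in the hypotheses constrains $\langle rad(V_1),rad(V_1)\rangle$ this way --- the radical can be large and can pair onto all of $V_0$ regardless of whether $\mathcal{S}\neq 0$. You flagged this as the ``main obstacle,'' but there is no visible mechanism to overcome it.

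The paper's route is much shorter and sidesteps both issues. Instead of isolating $\mathfrak{s}$, it works with the Leibniz subalgebra $\mathfrak{s}+D(V_0)$ (taking $\mathfrak{s}\cong sl_2$ inside the Levi factor). By the hypothesis on subalgebras this is closed under $\mathfrak{m}$, hence is itself a vertex $V_0$-algebroid; it is moreover a semisimple Leibniz algebra with Levi factor $\mathfrak{s}$. One then invokes Proposition~\ref{semisimpleLeibniz} to get $V_0\cong\mathbb{C}[x_1,\dots,x_n,y_1,\dots,y_n]/(x_ix_j,\,y_iy_j,\,x_iy_j)$, whose socle is the entire maximal ideal (dimension $2n\geq 2$), contradicting Gorenstein. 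The key move you missed is enlarging $\mathfrak{s}$ by $D(V_0)$: this yields an $\mathfrak{m}$-stable subalgebra to which an existing structure theorem applies directly, rather than trying to force $\mathfrak{m}\cdot\mathfrak{s}=0$.
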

\begin{proof} 
    Using the fact that $V_1$ is a left Leibniz algebra and by Proposition \ref{Levi}, $V_1$ is a semidirect product of a semisimple Lie algebra $S$ and $rad(V_1)$, the solvable radical of $V_1$. Assume that $S\neq 0$. Then there is a simple Lie algebra $\mathfrak{s}$ that is contained in $S$ and $\mathfrak{s}$ is isomorphic to $sl_2$. For $s^1,s^2\in \mathfrak{s}$, $a',a''\in V_0$, we have 
$$(s^1+D(a'))_0(s^2+D(a''))=s^1_0s^2+s^1_0 D(a'')=s^1_0s^2+D(s^1_0a'')\in\mathfrak{s}+D(V_0).$$ Hence, $\mathfrak{s}+D(V_0)$ is a subalgebra of the left Leibniz algebra $V_1$. Since $D(V_0)=Leib(V_1)$, we can conclude that  $\mathfrak{s}+D(V_0)$ is a semisimple Leibniz algebra. Since $\mathfrak{m}\cdot \mathfrak{s}+D(V_0)\subset \mathfrak{s}+D(V_0)$, we then have that $\mathfrak{s}+D(V_0)$ is a vertex $V_0$-algebroid. By Proposition \ref{semisimpleLeibniz}, we can conclude that there exists a positive integer $n$ such that $\mathbb{C}[x_1,...,x_n,y_1,...,y_n]/(x_ix_j,y_iy_j,x_iy_j~|~1\leq i,j\leq n)$. This is a contradiction since $V_0$ is a Gorenstein ring. Hence, $S=\{0\}$ and $V_1$ is a solvable Leibniz algebra as desired.
 \end{proof}


Now, we let $V$ be an $\mathbb{N}$-graded quasi vertex operator algebra such that $V_0=\bigoplus_{d=0}^sV_0^d$ is a graded Gorenstein algebra. Let $B(~,~)$ be a non-degenerate invariant bilinear form associated with the Poincar\'{e} duality property of $V_0$. Let $\langle\cdot |\cdot\rangle:V\times V\rightarrow\mathbb{C}$ be a symmetric invariant  bilinear map on $V$ defined by 
\begin{eqnarray*}
    \langle u|v\rangle&=&Res_x x^{-1}B({\bf 1}, Y(e^{xL(1)}(-x^{-2})^{L(0)}u,x^{-1})v).
\end{eqnarray*} Then $\langle~|~\rangle$ satisfies the following properties: for homogeneous elements $a,u,v\in V$,
\begin{eqnarray*}
\langle u|v\rangle &=&B({\bf 1},(-1)^{{wt}(u)}\sum_{i=0}^{\infty}\frac{1}{i!}(L(1)^iu)_{-1-i+2 wt(u)}v),\\
    \langle a_mu|v\rangle&=&\langle u|(-1)^{wt(a)}\sum_{i=0}^{\infty}(\frac{L(1)^i}{i!}a)_{2 wt(a)-m-i-2}v\rangle,\\
    \langle L(-1)u|v\rangle&=&\langle u|L(1)v\rangle,\text{ and }\\
    \langle L(0)u|v\rangle&=&\langle u|L(0)v\rangle.
\end{eqnarray*}

Notice that for $b\in V_1$, we have $0=\langle L(-1){\bf 1}|b\rangle=\langle {\bf 1}|L(1)b\rangle=B({\bf 1},L(1)b )$. Hence, $L(1)V_1\subset Nul(\varepsilon)$. Here, $\varepsilon$ is a linear functional on $V_0$ that is defined by $\varepsilon(a)=B({\bf 1},a)$ for all $a\in V_0$. Since $\dim V_0/Nul(\varepsilon)=1$, it implies that $\dim V_0/L(1)V_1\geq 1$. 

\begin{lem}\label{t-2m}
    Assume that $V_0\neq \mathfrak{a}$. We have $t_{-2}\mathfrak{m}\subseteq V_1^{\perp_{{\langle~,~\rangle}_{V_1}}}$.
\end{lem}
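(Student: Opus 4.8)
The plan is to compute $\langle t_{-2}a\mid v\rangle$ directly for arbitrary $a\in\mathfrak{m}$ and $v\in V_1$ by means of the adjoint/invariance formula for $\langle\cdot\mid\cdot\rangle$ displayed just before this lemma, and to show it vanishes. First I would observe that $t_{-2}a\in V_1$, since $t,a\in V_0$ force $\mathrm{wt}(t_{-2}a)=\mathrm{wt}(t)+\mathrm{wt}(a)-(-2)-1=1$, so pairing $t_{-2}a$ against $V_1$ is legitimate. Applying
\[
\langle a_m u\mid v\rangle=\Big\langle u\ \Big|\ (-1)^{\mathrm{wt}(a)}\sum_{i\ge 0}\Big(\tfrac{L(1)^i}{i!}a\Big)_{2\mathrm{wt}(a)-m-i-2}v\Big\rangle
\]
with the element being moved across equal to $t$, with $m=-2$ and $\mathrm{wt}(t)=0$, yields $\langle t_{-2}a\mid v\rangle=\big\langle a\mid\sum_{i\ge 0}\tfrac{1}{i!}(L(1)^it)_{-i}v\big\rangle$. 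Since $V$ is $\mathbb{N}$-graded we have $L(1)t\in V_{-1}=0$, so only the $i=0$ term survives and the pairing collapses to $\langle a\mid t_0v\rangle$.

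Next I would note that $t_0v$ again lies in $V_0$ (it has weight $\mathrm{wt}(t)+\mathrm{wt}(v)-0-1=0$), so $\langle a\mid t_0v\rangle$ is a pairing of two elements of $V_0$; on $V_0\times V_0$ the defining formula for $\langle\cdot\mid\cdot\rangle$ simplifies, by the same truncation $L(1)^iu\in V_{-i}=0$ for $i\ge1$, to $\langle u\mid w\rangle=B({\bf 1},u_{-1}w)$. Hence $\langle a\mid t_0v\rangle=B({\bf 1},a_{-1}(t_0v))$. By the commutativity axiom of the $1$-truncated conformal algebra $V_0\oplus V_1$ one has $t_0v=-v_0t$, and since $V_0\neq\mathfrak{a}$, Lemma \ref{propmt}(3) gives $v_0t\in\mathbb{C}t$, so $t_0v\in\mathbb{C}t$. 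Therefore $a_{-1}(t_0v)\in\mathbb{C}\,(a_{-1}t)$, and $a_{-1}t=0$ because $a\in\mathfrak{m}$ while $t\in soc(V_0)=Ann_{V_0}(\mathfrak{m})$. Consequently $\langle t_{-2}a\mid v\rangle=B({\bf 1},0)=0$ for every $v\in V_1$, which is precisely $t_{-2}\mathfrak{m}\subseteq V_1^{\perp_{{\langle~,~\rangle}_{V_1}}}$.

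I do not anticipate a genuine obstacle; the argument is short and essentially mechanical. The two points that deserve care are the weight bookkeeping that makes the infinite sums in the adjoint and defining formulas truncate — both instances rest on $\mathbb{N}$-gradedness, i.e. $V_{-i}=0$ — and invoking the standing hypothesis $V_0\neq\mathfrak{a}$ at exactly the right moment, namely to apply Lemma \ref{propmt}(3) (equivalently, to force the proper ideal $\mathfrak{a}$ into $\mathfrak{m}$), which is what drives $t_0v$ into the socle $\mathbb{C}t$ after the commutativity flip and thereby kills $a_{-1}(t_0v)$.
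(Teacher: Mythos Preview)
Your proof is correct and follows essentially the same approach as the paper's: move $t_{-2}$ across via the adjoint formula to obtain $\langle t_{-2}a\mid v\rangle=\langle a\mid t_0v\rangle=B(a,t_0v)$, then invoke Lemma~\ref{propmt}(3) to force $t_0v\in\mathbb{C}t$ and conclude vanishing since $a\in\mathfrak{m}$ annihilates $t$. The paper's argument is simply terser, writing $B(a,t_0u)$ directly rather than your equivalent $B({\bf 1},a_{-1}(t_0v))$ and leaving the weight truncations implicit.
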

\begin{proof} By Lemma \ref{propmt}, for $u\in V_1$, $a\in\mathfrak{m}$, we have 
  $\langle t_{-2}a|u\rangle=\langle a| t_0u\rangle=B(a,t_0u)=0$.
  Hence, $t_{-2}\mathfrak{m}\subseteq V_1^{\perp_{{\langle~,~\rangle}_{V_1}}}$ as desired.
\end{proof}


Assume that $V_0\neq\mathfrak{a}$. Following \cite{Mason2014} we let $((~,~)):V_1\times V_1\rightarrow \mathbb{C}$ be a bilinear map defined by $((u,v))=B(u_1v,t)$ for $u,v\in V_1$. Clearly, $((~,~))$ is symmetric. Because for $u,v,w\in V_1$,
\begin{eqnarray*}
((u,v_0w))&=&B(u_1(v_0w),t)\\
&=&B(v_0(u_1w),t)-B((v_0u)_1w,t)\\
&=&-B((v_0u)_1w,t)\\
&=&-B((-(u_0v)+D(v_1u))_1w,t)\\
&=&B((u_0v)_1w,t),
\end{eqnarray*} We have $((u_0v,w))=((u,v_0w))$. In addition, $((u_0v,w))=-(v_0u,w))$ for $u,v,w\in V_1$ and $((~,~))$ is an invariant bilinear form on $V_1$.

\begin{lem} Let $V$ be an $\mathbb{N}$-graded quasi vertex operator algebra such that $V_0$ is a graded Gorenstein algebra. Let $B(~,~)$ be a non-degenerate invariant bilinear form associated with the Poincar\'{e} duality property of $V_0$. We define a bilinear map $((~,~)):V_1\times V_1\rightarrow \mathbb{C}$ by $((u,v))=B(u_1v,t)$ for $u,v\in V_1$. Also, we define $rad ((~,~))=\{u\in V_1~|~((u,v))=0\text{ for all }v\in V_1\}$. Assume that $V_0\neq \mathfrak{a}$, then $L(-1)(V_0)\subseteq rad((~,~))$ and $rad((~,~))$ is a 2-sided ideal of the left Leibniz algebra $V_1$.
\end{lem}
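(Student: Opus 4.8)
The plan is to establish the two assertions separately, using only the invariance identity $((u_0v,w))=((u,v_0w))$ and the symmetry of $((~,~))$ that were verified just above the statement, together with Lemma~\ref{t-2m} and the basic 1-truncated conformal algebra relations on $V_0\oplus V_1$.

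\emph{Step 1: $L(-1)(V_0)\subseteq rad((~,~))$.} Let $a\in V_0$ and $v\in V_1$. I would compute $((L(-1)a,v))=B((L(-1)a)_1 v,t)$ and rewrite $(L(-1)a)_1$ using the derivation axiom of the 1-truncated conformal algebra: since $L(-1)=D$ and $\partial=D|_{V_0}$, we have $(Da)_1 v=-a_0 v$, hence $((L(-1)a,v))=-B(a_0v,t)=B(v_0 a,t)$ (using commutativity $a_0v=-v_0a$). Now $v_0a\in V_0$, and by Lemma~\ref{t-2m} (equivalently, by the computation $B(v_0a,t)=\langle t_{-2}a\mid v\rangle$ type of manipulation) — more directly, $B(v_0 a,t)=B(a,? )$ via invariance of $B$; the cleanest route is to note $v_0a = -a_0 v$ and use the invariance of the semiconformal form $\langle~|~\rangle$: $\langle t_{-2}a\mid v\rangle = \langle a\mid t_0 v\rangle$. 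But we actually want $B(v_0a,t)$. I would instead argue: $B(v_0 a, t)$ pairs an element of $V_0$ against $t\in soc(V_0)=\mathbb{C}t$; write $a\in\mathbb{C}\mathbf{1}\oplus\mathfrak{m}$. If $a\in\mathfrak{m}$ then $v_0 a\in\mathfrak{a}\subsetneq V_0$, and since $\mathfrak{a}$ is a proper ideal it lies in $\mathfrak{m}$, so $B(v_0a,t)=\varepsilon$-type pairing $=B({\bf 1}, (v_0a)_{-1}t)$; but $(v_0a)_{-1}t\in \mathfrak{m}\cdot\mathbb{C}t = \{0\}$ since $t$ spans the socle and $\mathfrak a\subseteq\mathfrak m$ annihilates it. If $a=\mathbf 1$ then $v_0\mathbf 1 = 0$. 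So $((L(-1)a,v))=0$ for all $v$, i.e. $L(-1)(V_0)\subseteq rad((~,~))$.

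\emph{Step 2: $rad((~,~))$ is a two-sided ideal of $V_1$.} Let $r\in rad((~,~))$, $u\in V_1$. For the left ideal property I must show $u_0 r\in rad((~,~))$, i.e. $((u_0 r, w))=0$ for all $w\in V_1$. By invariance $((u_0 r,w))=((r, u_0 w))$ (note: the identity proved above reads $((u_0v,w))=((u,v_0w))$ with roles $u\leftrightarrow u$, $v\leftrightarrow r$; one must be slightly careful, since the invariance identity relates the $0$-action on the first argument to the $0$-action on the \emph{second}, so $((u_0 r,w))$ needs $((r_0 u,w))=-((u_0 r,w))$ together with $((r_0 u,w))=((r,u_0 w))$, giving $((u_0 r,w))=-((r,u_0 w))=0$ since $r\in rad$). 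For the right ideal property $r_0 u\in rad((~,~))$: $((r_0 u,w))=((r,u_0 w))=0$ directly. Since $Leib(V_1)=\mathrm{Span}\{x_0 x\}$ and the Leibniz algebra is generated by the $0$-bracket, left- and right-ideal for the $0$-product is exactly what ``two-sided ideal of the left Leibniz algebra'' means here; alternatively one invokes the remark after the definition of ideal, that $D$-invariance of $rad((~,~))$ plus the left condition gives the right condition, and $D$-invariance follows from Step~1 since $D(V_1)\subseteq V_2$ is irrelevant but $D r = (L(-1)r)$ and $((L(-1)r,w))=((r,L(1)w))$ — this last needs $L(1)w\in V_0$ paired correctly; I would use the already-established $\langle L(-1)u\mid v\rangle=\langle u\mid L(1)v\rangle$ analog descended to $((~,~))$.

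\emph{Main obstacle.} The genuine subtlety is bookkeeping the \emph{sidedness} of the invariance identity: the relation proved in the text, $((u_0v,w))=((u,v_0w))$, moves the $0$-action from the first slot to the second slot, not from first to first, so both the left-ideal verification in Step~2 and the $L(-1)$-computation in Step~1 require one extra application of skew-symmetry $u_0v=-v_0u+D(u_1v)$ together with the fact (from Step~1, or from $L(-1)V_0\subseteq rad$ applied to $D(u_1v)\in L(-1)V_0$) that $D$-exact elements are radical. So Step~1 and Step~2 are genuinely interlocked, and I would present Step~1 first precisely so that the ``$D(u_1v)$ term is harmless'' fact is available when closing Step~2. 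Everything else is a routine unwinding of the 1-truncated conformal algebra axioms and the socle property $\mathfrak{m}\cdot\mathbb{C}t=0$.
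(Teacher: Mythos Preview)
Your core argument is correct and matches the paper's proof exactly: for Step~1 you use $(L(-1)a)_1v=-a_0v=v_0a$ together with the fact that $\mathfrak a\subseteq\mathfrak m$ annihilates $t$, and for Step~2 you use the invariance $((u_0v,w))=((u,v_0w))$ to get one side directly and then the skew-symmetry $u_0v+v_0u=D(u_1v)$ combined with Step~1 to get the other side, which is precisely the paper's route. The only thing to excise is the ``alternatively'' digression at the end of Step~2: the remark about $D$-invariance of ideals concerns ideals of the \emph{vertex algebra} $V$, not of the Leibniz algebra $V_1$, and the formula $((L(-1)r,w))=((r,L(1)w))$ is ill-typed since $L(-1)r\in V_2$ lies outside the domain of $((~,~))$; your main line already suffices without it.
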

\begin{proof} Assume that $V_0\neq \mathfrak{a}$. Hence, $\mathfrak{a}\subseteq \mathfrak{m}$. First, we will show that $L(-1)(V_0)\subseteq rad((~,~))$. Because $t\in Ann_{V_0}\mathfrak{m}$, we can conclude that for $a\in\mathfrak{m}$, $u\in V_1$, we have
$((u,L(-1)a)=B((L(-1)a)_1u,t)=B(u_0a,t)=B(u_0a,t_{-1}{\bf 1})=B(t_{-1}(u_0a),{\bf 1})=B(0,{\bf 1})=0$. Hence, $L(-1)(V_0)\subseteq rad((~,~))$.

Next, we will show that $rad((~,~))$ is a 2-sided ideal of $V_1$. Let $u\in rad((~,~))$ and let $v\in V_1$. Since $((u_0v,w))=((u,v_0w))=0$ for all $w\in V_1$, we can conclude that $u_0v\in rad((~,~))$. Since $u_0v=-v_0u+L(-1)u_1v$ and $L(-1)u_1v\in rad((~,~))$, we can conclude that $rad((~,~))$ is a 2-sided ideal of the left Leibniz algebra $V_1$.
\end{proof}
\begin{prop}\label{rad(())property}
    Let $V$ be an $\mathbb{N}$-graded quasi vertex operator algebra such that $V_0$ is a graded Gorenstein algebra. Assume that $\mathfrak{a}\neq V_0$. The following statements hold:
    \begin{enumerate}
        \item for $a\in\mathfrak{m}$, $u\in V_1$, $u_{-1}a\in rad((~,~))$;
        \item for $v\in rad((~,~))$, $u\in V_1$, we have $v_1u\in\mathfrak{m}$;
        \item for $u\in V_1$, $u\in rad ((~,~))$ if and only if $u_{-1}t\in V_1^{\perp_{\langle~,~\rangle_{V_1}}}$.
    \end{enumerate}
\end{prop}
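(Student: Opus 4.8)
The plan is to reduce all three statements to the behaviour of the linear functional $B(\cdot,t)$ on $V_0$ and to one identity linking the form $((~,~))$ on $V_1$ with the ambient symmetric invariant form $\langle~|~\rangle$, namely $((u,v))=-\langle u\,|\,v_{-1}t\rangle$ for $u,v\in V_1$. Two preliminary remarks will be used throughout. First, since $t$ spans $soc(V_0)=Ann_{V_0}(\mathfrak m)$, invariance of $B$ gives $B(m,t)=B({\bf 1},m_{-1}t)=0$ for every $m\in\mathfrak m$, while $B({\bf 1},t)\neq 0$; because $\dim V_0/\mathfrak m=1$, the kernel of $B(\cdot,t)$ is therefore exactly $\mathfrak m$. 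Second, $\mathfrak a$ is a \emph{proper} ideal of the local ring $V_0$ — this is precisely where the hypothesis $\mathfrak a\neq V_0$ enters — so $\mathfrak a\subseteq\mathfrak m$, and moreover $v_0 t\in\mathbb{C}t$ for all $v\in V_1$ by Lemma~\ref{propmt}(3).

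For (1) I would expand $(u_{-1}a)_1w$ (for $w\in V_1$) using the Borcherds iterate identity; all modes $a_j$ with $j\geq 1$ annihilate $V_0$ and $V_1$ by weight, so the sum collapses to $(u_{-1}a)_1w=a_0(u_0w)+a_{-1}(u_1w)$. The second term lies in $\mathfrak m\cdot V_0\subseteq\mathfrak m$, and the first equals $-(u_0w)_0a\in\mathfrak a\subseteq\mathfrak m$; hence $((u_{-1}a,w))=B\!\left((u_{-1}a)_1w,\,t\right)=0$ for all $w$, i.e. $u_{-1}a\in rad((~,~))$. (One could instead note $u_{-1}a\equiv a_{-1}u$ modulo $L(-1)V_0\subseteq rad((~,~))$, but the direct computation is shorter.) Part (2) is then immediate: for $v\in rad((~,~))$ and $u\in V_1$ we have $B(v_1u,t)=((v,u))=0$, so $v_1u$ lies in the kernel of $B(\cdot,t)$, which is $\mathfrak m$.

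For (3) the main work is proving the bridge identity. Starting from $((u,v))=B(u_1v,t)=B({\bf 1},(u_1v)_{-1}t)$, I would expand $(u_1v)_{-1}$ by the Borcherds identity: the term involving $u_1t$ drops out by weight, and the combination $v_0(u_0t)-u_0(v_0t)$ vanishes because $u_0t$ and $v_0t$ are scalar multiples of $t$ (Lemma~\ref{propmt}(3)) and scalars commute, leaving $(u_1v)_{-1}t=u_1(v_{-1}t)$ and hence $((u,v))=B({\bf 1},u_1(v_{-1}t))$. On the other side, the explicit formula for $\langle~|~\rangle$ restricted to $V_1$ reads $\langle u\,|\,v_{-1}t\rangle=-B({\bf 1},u_1(v_{-1}t))-B({\bf 1},(L(1)u)_0(v_{-1}t))$, and the last term vanishes since $(L(1)u)_0(v_{-1}t)=\bigl((L(1)u)_0v\bigr)_{-1}t$ with $(L(1)u)_0v\in\mathfrak a\subseteq\mathfrak m$ annihilated by $t$; comparing the two expressions gives the identity. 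Finally, applying it with $u$ and $v$ interchanged and using the symmetry of both forms yields $\langle u_{-1}t\,|\,w\rangle=\langle w\,|\,u_{-1}t\rangle=-((u,w))$ for every $w\in V_1$, so $u_{-1}t\in V_1^{\perp_{\langle~,~\rangle_{V_1}}}$ if and only if $((u,w))=0$ for all $w$, i.e. if and only if $u\in rad((~,~))$. The delicate step is the bridge identity itself: one has to truncate two infinite Borcherds sums by conformal weight and recognize that the resulting error terms $v_0(u_0t)-u_0(v_0t)$ and $(L(1)u)_0(v_{-1}t)$ vanish \emph{only} because of the Gorenstein hypothesis, i.e. because $Ann_{V_0}(\mathfrak m)=\mathbb{C}t$ (used directly, and through Lemma~\ref{propmt}(3), which itself needs $\mathfrak a\neq V_0$); the rest is routine bookkeeping with the grading and the commutator/iterate formulas.
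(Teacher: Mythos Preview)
Your proof is correct. Parts (1) and (2) follow exactly the paper's argument (the paper expands $v_1u_{-1}a$ by the commutator formula rather than $(u_{-1}a)_1w$ by the iterate formula, but the computation is the same up to relabelling).

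For (3) there is a small but genuine difference. The paper writes $((u,v))=B(u_1v,t)=\langle u_1v\,|\,t\rangle$ and moves $u_1$ across via invariance of $\langle~|~\rangle$, obtaining $((u,v))=-\langle v\,|\,u_{-1}t\rangle-\langle v\,|\,(L(1)u)_{-2}t\rangle$; it then uses skew-symmetry to rewrite $(L(1)u)_{-2}t=-t_{-2}a$ for some $a\in\mathfrak m$ and invokes Lemma~\ref{t-2m} to kill this term. Your route instead expands $(u_1v)_{-1}t$ by the Borcherds identity and shows directly that the error terms $v_0(u_0t)-u_0(v_0t)$ and $(L(1)u)_0(v_{-1}t)$ are \emph{literally zero} (the first by Lemma~\ref{propmt}(3), the second because it equals $m'_{-1}t$ with $m'\in\mathfrak m$). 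This bypasses Lemma~\ref{t-2m} entirely and gives the sharper statement $((u,v))=-\langle u\,|\,v_{-1}t\rangle$ as an exact identity, not just modulo $V_1^{\perp}$. The trade-off is that the paper's route is one line once the invariance formula is in hand, whereas yours requires tracking two Borcherds expansions; but yours is self-contained and makes clearer exactly where the Gorenstein hypothesis (via $Ann_{V_0}(\mathfrak m)=\mathbb{C}t$ and $\mathfrak a\subseteq\mathfrak m$) enters.
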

\begin{proof}
 Recall that for $u,v\in V_1$, $a\in\mathfrak{m}$ we have $v_1u_{-1}a=u_{-1}v_1a-\sum_{i=0}^1\binom{1}{i}(v_iu)_{-i}a=-(v_0u)_0a-(v_1u)_{-1}a$. This implies that for $u,v\in V_1$, $a\in\mathfrak{m}$, $((v,u_{-1}a))=B(v_1u_{-1}a,t)=B(-(v_0u)_0a-(v_1u)_{-1}a,t)=0$.
Therefore, $u_{-1}a\in rad((~,~))$ for all $a\in \mathfrak{m}$, $u\in V_1$. This proves (1).  

Next, we will prove (2). Let $v\in rad((~,~))$. Then $((u,v))=0$ for all $u\in V_1$. Because 
$0=((v,u))=B(v_1u,t)$ for all $u\in V_1$, we can conclude that for $u\in V_1$, $v_1u\in \mathfrak{m}$.

Now, we will prove (3). Observe that for $u,v\in V_1$, $$((u,v))=B(u_1v,t)=\langle v|(-1)\sum_{i=0}^{\infty}\frac{1}{i!}\left(L(1)^iu\right)_{-1-i}t\rangle=-\langle v| u_{-1}t\rangle-\langle v| (L(1)u)_{-2}t\rangle.$$
Because $L(1)u\in V_0=\mathbb{C}{\bf 1}\oplus\mathfrak{m}$, we have that  $(L(1)u)_{-2}t=a_{-2}t=-t_{-2}a+L(-1)t_{-1}a$ for some $a\in\mathfrak{m}$. However, $t_{-1}a=0$. Hence, $(L(1)u)_{-2}t=a_{-2}t=-t_{-2}a$. This implies that $$ 
((u,v))=B(u_1v,t)=-\langle v| u_{-1}t\rangle-\langle v| (L(1)u)_{-2}t\rangle=-\langle v| u_{-1}t\rangle-\langle v| t_{-2}a\rangle.$$ By Lemma \ref{t-2m}, $t_{-2}a\in V_1^{\perp_{\langle~,~\rangle_{V_1}}}$. This  implies that $u\in rad((~,~))$ if and only if $u_{-1}t\in V_1^{\perp_{\langle~,~\rangle_{V_1}}}$.
\end{proof}
\begin{rem}
    The statement (3) in Proposition \ref{rad(())property} is a generalization of Lemma 8 in \cite{Mason2014}.
\end{rem}
\begin{prop} Let $V$ be an $\mathbb{N}$-graded quasi vertex operator algebra such that $V_0$ is a graded Gorenstein algebra. We set $Ann_{V_1}(t_{-1})=\{v\in V_1~|~t_{-1}v=0\}$. Assume that $V_0\neq\mathfrak{a}$. Then 
\begin{enumerate}
\item $Ann_{V_1}(t_{-1})$ is a left ideal of $V_1$.
    \item Let $W=\{w\in V_1~|~\langle t_{-1}w| v\rangle=0\text{ for all }v\in V_1\}$. Then $W$ is a left ideal of $V_1$. 
    \item If $L(1)(V_1)\subseteq \mathfrak{m}$ then $rad((~,~))=W$.
    \end{enumerate}

\end{prop}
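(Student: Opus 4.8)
The plan is to prove the three parts in order, using the $D$-bracket identities available for $V_1$ (in particular the formulas $u_{-1}v_n - v_n u_{-1} = \sum_{i\geq 0}\binom{-1}{i}(u_i v)_{n-1-i}$ specialized to degree considerations) together with the machinery already established: Proposition \ref{rad(())property}, Lemma \ref{t-2m}, and the basic fact $t_{-1}a = 0$ for $a\in\mathfrak{m}$ (since $t\in Ann_{V_0}(\mathfrak{m})$). For part (1), I would take $w\in Ann_{V_1}(t_{-1})$ and $u\in V_1$ and compute $t_{-1}(u_0 w)$. Using the commutator formula $t_{-1}u_0 = u_0 t_{-1} + (u_0 t)_{-1} + \text{(lower terms that vanish by degree)}$, more precisely $[t_{-1}, u_0] = \sum_{i\geq 0}\binom{0}{i}(u_i t)_{-1-i}\cdot(\pm) = (u_0 t)_{-1}$ acting appropriately, and invoking Lemma \ref{propmt}(3) that $u_0 t\in\mathbb{C}t$ (valid since $\mathfrak{a}\neq V_0$), one gets $t_{-1}(u_0 w) = u_0(t_{-1}w) + (u_0 t)_{-1}w = 0 + c\, t_{-1}w = 0$. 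Hence $u_0 w\in Ann_{V_1}(t_{-1})$, so it is a left ideal.

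For part (2), I would argue similarly but at the level of the bilinear form $\langle~|~\rangle$. Take $w\in W$ and $u\in V_1$; I must show $\langle t_{-1}(u_0 w)\,|\,v\rangle = 0$ for all $v\in V_1$. Rewrite $t_{-1}(u_0 w) = u_0(t_{-1}w) + (u_0 t)_{-1}w$ as above. The first term pairs as $\langle u_0(t_{-1}w)\,|\,v\rangle = \langle t_{-1}w\,|\,(\text{adjoint of }u_0)v\rangle$; using the invariance formula displayed before Lemma \ref{t-2m}, the adjoint of $u_0$ sends $V_1$ into $V_1$ (up to terms involving $L(1)u\in V_0$ which contribute operators of the right degree), so this vanishes because $w\in W$. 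The second term is $\langle (u_0 t)_{-1}w\,|\,v\rangle$, and since $u_0 t = c\,t$ by Lemma \ref{propmt}(3), this equals $c\langle t_{-1}w\,|\,v\rangle = 0$. Therefore $u_0 w\in W$, establishing that $W$ is a left ideal.

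For part (3), under the hypothesis $L(1)V_1\subseteq\mathfrak{m}$, I would revisit the identity from the proof of Proposition \ref{rad(())property}(3): $((u,v)) = -\langle v\,|\,u_{-1}t\rangle - \langle v\,|\,(L(1)u)_{-2}t\rangle$, and there $(L(1)u)_{-2}t = a_{-2}t = -t_{-2}a$ for $a = L(1)u\in\mathfrak{m}$. By Lemma \ref{t-2m}, $t_{-2}a\in V_1^{\perp_{\langle~,~\rangle_{V_1}}}$, so the second term vanishes for all $v\in V_1$, giving $((u,v)) = -\langle v\,|\,u_{-1}t\rangle = -\langle t_{-1}u\,|\,v\rangle$ after using symmetry of $\langle~|~\rangle$ and the commutativity relation $u_{-1}t = t_{-1}u$ (valid since $_{-1}$-products of $V_1$ into $V_0$... careful: here I want $\langle v|u_{-1}t\rangle$; note $u_{-1}t\in V_1$ and by skew-symmetry $u_{-1}t = t_{-1}u + $ correction, but $t\in Z(V_0)$-type element means $D(t)$ may be nonzero — I will instead just observe directly that $\langle v|u_{-1}t\rangle = \langle v | t_{-1}u\rangle$ is not needed; rather $\langle v| u_{-1}t\rangle$ with $w := u$ shows $u\in rad((~,~)) \iff \langle v|u_{-1}t\rangle = 0\ \forall v \iff u_{-1}t\in V_1^{\perp}$, and separately $u\in W \iff t_{-1}u\in V_1^{\perp}$). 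The remaining point is to identify $u_{-1}t$ with $t_{-1}u$ up to something in $V_1^{\perp_{\langle~,~\rangle_{V_1}}}$: by skew-symmetry $Y(u,x)t = e^{xD}Y(t,-x)u$, so $u_{-1}t = t_{-1}u + D(t_0 u) + \cdots$; since $t\in Z(V_0)$ is not assumed, I instead note $t_0 u = -u_0 t\in\mathbb{C}t$, hence $D(t_0 u)\in\mathbb{C}D(t) = \mathbb{C}L(-1)t\subseteq rad((~,~))\subseteq$ (pairs trivially against... no).

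The main obstacle I anticipate is exactly this last reconciliation in part (3): cleanly showing $u_{-1}t$ and $t_{-1}u$ differ by an element that is $\langle~,~\rangle_{V_1}$-orthogonal to all of $V_1$, so that $\{u : u_{-1}t\in V_1^{\perp}\}$ and $\{u : t_{-1}u\in V_1^{\perp}\}$ coincide. I expect this to follow from skew-symmetry $u_{-1}t = \sum_{i\geq 0}\frac{(-1)^{i}}{i!}D^{i}(t_{-1+i}u) = t_{-1}u - D(t_0 u) + \tfrac12 D^2(t_1 u) - \cdots$, combined with the facts that $t_j u$ for $j\geq 0$ lives in $\mathfrak{m}$-like pieces whose $D$-images land in $t_{-2}\mathfrak{m} + \cdots \subseteq V_1^{\perp}$ by Lemma \ref{t-2m} and Proposition \ref{rad(())property}(1); the hypothesis $L(1)V_1\subseteq\mathfrak{m}$ is presumably what makes the correction terms controllable. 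Once that is in hand, part (3) is immediate from Proposition \ref{rad(())property}(3).
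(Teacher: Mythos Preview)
Your arguments for parts (1) and (2) are correct and match the paper's approach. One small slip: the commutator is $[u_0,t_{-1}]=(u_0t)_{-1}$, so $t_{-1}(u_0w)=u_0(t_{-1}w)-(u_0t)_{-1}w$, not $+$; this does not affect the conclusion since both summands vanish (the second being a scalar multiple of $t_{-1}w$ via Lemma~\ref{propmt}(3)).

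For part (3) you have the right skeleton but you stop exactly where the hypothesis $L(1)V_1\subseteq\mathfrak{m}$ is actually used, and your worries about higher correction terms are unnecessary. Skew-symmetry gives precisely
\[
t_{-1}u \;=\; u_{-1}t + L(-1)(t_0u),
\]
with no further terms, since $t_1u\in V_{-1}=0$. Here $t_0u=-u_0t=\mu_u t$ for some $\mu_u\in\mathbb{C}$ by Lemma~\ref{propmt}(3). The missing step is the one-line computation
\[
\langle L(-1)t\mid v\rangle \;=\; \langle t\mid L(1)v\rangle \;=\; B(t,\,L(1)v)\;=\;B({\bf 1},\,t_{-1}L(1)v)\;=\;0,
\]
the last equality because $L(1)v\in\mathfrak{m}$ by hypothesis and $t\in Ann_{V_0}(\mathfrak{m})$. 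Thus $L(-1)t\in V_1^{\perp_{\langle\,,\,\rangle_{V_1}}}$, so $\langle t_{-1}u\mid v\rangle=\langle u_{-1}t\mid v\rangle$ for all $v\in V_1$. Combining this with Proposition~\ref{rad(())property}(3) gives $u\in rad((\,,\,))\iff u_{-1}t\in V_1^{\perp}\iff t_{-1}u\in V_1^{\perp}\iff u\in W$, which is exactly what the paper does. Your detour through $t_{-2}\mathfrak{m}$ and Proposition~\ref{rad(())property}(1) is not needed here.
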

\begin{proof} Recall that for $u,v\in V_1$, $a\in V_0$, we have 
$a_{-1}u_0v=u_0a_{-1}v-(u_0a)_{-1}v$. This implies that for $u\in V_1, v\in Ann_{V_1}(t_{-1})$, $t_{-1}u_0v=u_0t_{-1}v-(u_0t)_{-1}v=0.$ Therefore, $u_0v\in Ann_{V_1}(t_{-1})$, and $Ann_{V_1}(t_{-1})$ is a left ideal of $V_1$. This proves the statement (1).

Next, we will prove the statement (2). Clearly, $Ann_{V_1}(t_{-1})\subseteq W$. 
Now, let $w\in W$ and $u\in V_1$. Because $t_0u=\beta t$ for some $\beta\in\mathbb{C}$, we have
\begin{eqnarray*}
    \langle t_{-1}u_0w| v\rangle&=&\langle u_0t_{-1}w-(u_0t)_{-1}w| v\rangle\\
    &=&\langle t_{-1}w|-\sum_{i=0}^{\infty}\frac{1}{i!}(L(1)^iu)_{-i}v\rangle-\langle (u_0t)_{-1}w|v\rangle\\
    &=&\langle t_{-1}w|-(u_0v+(L(1)u)_{-1}v)\rangle-\langle (u_0t)_{-1}w|v\rangle\\
    &=&-\langle (u_0t)_{-1}w|v\rangle\\
     &=&-\beta\langle t_{-1}w|v\rangle\\
     &=&0\text{ for all }v\in V_1.
\end{eqnarray*}
Hence, $W$ is a left ideal of $V_1$. 

Now, we will study statement (3). Let $u\in V_1$. Assume that $t_0u=\mu_u t$ for some $\mu_u\in\mathbb{C}$. Then 
\begin{eqnarray*}
   \langle t_{-1}u|v\rangle&=&\langle u_{-1}t+L(-1)t_0u|v\rangle\\
   &=&\langle u_{-1}t|v\rangle+\mu_{u}\langle L(-1)t|v\rangle\\
   &=&\langle u_{-1}t|v\rangle+\mu_{u}B(t,L(1)(v))\\
   &=&\langle u_{-1}t|v\rangle\text{ for all }v\in V_1.
\end{eqnarray*}
    By the above assumption and Proposition \ref{rad(())property}, we can conclude that $u\in rad((~,~))$ if and only if $u\in W$. \end{proof}

For the rest of this section, we assume that $V$ is an $\mathbb{N}$-graded vertex algebra such that $\dim V_0<\infty$, $\dim V_1<\infty$ and $V_0\neq \mathfrak{a}$. Recall that $\mathbb{C}t$ is a $V_1$-module. We define a linear map
 $\psi:V_1\rightarrow\mathbb{C}t; v\mapsto v_0t$. We define $M=\{u\in V_1~|~u_0t=0\}$ (i.e., $M=Ker(\psi)$). $D(V_0)\subseteq M$ and $M$ has co-dimension at most 1. Observe that for $u,v\in V_1$, we have $(u_0v)_0t=u_0v_0t-v_0u_0t$. If $u\in M$ then $u_0v$ and $v_0u$ are both in $M$. 
\begin{lem}
    Let $V$ be an $\mathbb{N}$-graded quasi vertex operator algebra such that $V_0=\bigoplus_{d=0}^sV_0^d$ is a graded Gorenstein algebra. Assume that $V_0\neq\mathfrak{a}$. Then 
    \begin{enumerate}
        \item $M$ is a solvable ideal of $V_1$ and $M=(L(-1)t)^{\perp_{{\langle~|~\rangle}_{V_1}}}$.
        \item If $u\in M$ then $L(1)u\in\mathfrak{m}$.
        \item If $u\in M$ then $u_{-1}t\in V_1^{\perp_{\langle~,~\rangle_{V_1}}}$ if and only $t_{-1}u\in V_1^{\perp_{\langle~,~\rangle_{V_1}}}$.
        \item At least one of the containments $rad((~,~))\subseteq M$ , $W\subseteq M$ holds.
    \end{enumerate}
\end{lem}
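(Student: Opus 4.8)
The plan is to establish each of the four items by direct computation using the invariance properties of $\langle\cdot|\cdot\rangle$ and $((~,~))$, the Poincar\'e duality structure of $V_0$ (so that $t\in Ann_{V_0}(\mathfrak m)$, $t_{-1}\mathfrak m=0$, and $B({\bf 1},t)\neq 0$), and the results already proved in Proposition \ref{rad(())property} and Lemmas \ref{propmt}, \ref{t-2m}. Throughout I would use the standard quasi-VOA identity $\langle a_m u|v\rangle = \langle u|(-1)^{\mathrm{wt}(a)}\sum_{i\geq 0}(\tfrac{L(1)^i}{i!}a)_{2\mathrm{wt}(a)-m-i-2}v\rangle$ specialized to $a=t\in V_0^s$ and to $a=L(-1)t$, together with $t_0 u\in\mathbb Ct$ for all $u\in V_1$ (which holds since $V_0\neq\mathfrak a$, by Lemma \ref{propmt}(3)).

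For (1): first I would show $M=(L(-1)t)^{\perp}$ with respect to $\langle~|~\rangle_{V_1}$. Compute $\langle L(-1)t|u\rangle=\langle t|L(1)u\rangle=B(t,L(1)u)$; since $L(1)u\in V_0=\mathbb C{\bf 1}\oplus\mathfrak m$ and $B$ pairs $t$ nondegenerately only against $\mathbb C{\bf 1}$ (up to the $\mathfrak m$-part which $t$ annihilates under multiplication but not necessarily under $B$ — here I must be careful: $B(t,\mathfrak m)$ need not vanish). The cleaner route: $\langle L(-1)t|u\rangle=\langle L(-1)t|u\rangle$, and separately $u\in M$ iff $u_0t=0$; relate $u_0t$ to the pairing by noting $\psi(u)=u_0t=\mu_u t$ and $\langle L(-1)t|u\rangle = \langle t| L(1)u\rangle$, while on the other hand the invariance $((u,L(-1)a))=0$ machinery identifies the functional $u\mapsto \mu_u$ with $u\mapsto\langle L(-1)t|u\rangle$ up to the nonzero constant $B({\bf 1},t)$; I expect $M$ to be exactly the kernel, hence a codimension-$\leq 1$ ideal (it is an ideal by the remark preceding the Lemma that $u\in M\Rightarrow u_0v,v_0u\in M$), and solvability follows because $V_1/M$ is at most $1$-dimensional (abelian) and $M\supseteq D(V_0)=Leib$-type pieces — more precisely $M$ is an ideal of codimension $\leq 1$ in $V_1$, so if $V_1$ had a nonabelian simple quotient it would have to come from $V_1/M$, impossible; to get outright solvability of $M$ I would invoke Theorem \ref{V1solvable}-type reasoning or argue that any semisimple Levi factor $\mathfrak s\subseteq V_1$ satisfies $\mathfrak s\subseteq M$ yet $\mathfrak s\cap M$ being an ideal of $\mathfrak s$ forces $\mathfrak s\subseteq M$ and then $\mathfrak s$ acts on $\mathbb Ct$ trivially, contradicting... — this is the delicate point and the likely main obstacle: pinning down solvability of $M$ rather than just $V_1/M$ abelian.

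For (2): if $u\in M$ then $u_0t=0$; by Lemma \ref{propmt}(2), $v_0t\in\mathbb Ct$ iff $L(1)(t_{-1}v)\in\mathbb Ct$, but more directly I would apply Lemma \ref{propmt}(2) in the form "$v_0a\in\mathfrak m$ iff $L(1)(a_{-1}v)\in\mathfrak m$" — actually the statement wanted is about $L(1)u$ itself; I would use $L(1)(t_{-1}u)=t_{-1}L(1)u+u_0t=t_{-1}L(1)u$ (since $u_0t=0$), and since $t_{-1}L(1)u\in\mathbb Ct\subseteq\mathfrak m$ and $t_{-1}$ kills $\mathfrak m$ while $t_{-1}{\bf 1}=D(t)$, writing $L(1)u=\lambda{\bf 1}+m$ gives $t_{-1}L(1)u=\lambda\, D(t)$; combined with the constraint $L(1)(t_{-1}u)\in\mathbb Ct$ coming from $t_{-1}u\in\mathbb Ct$ and $L(1)$ of it landing in... forcing $\lambda=0$, i.e. $L(1)u\in\mathfrak m$. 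For (3): apply Proposition \ref{rad(())property}(3) philosophy together with $t_{-1}u=u_{-1}t+L(-1)(t_0u)=u_{-1}t+L(-1)(\mu_u t)$ when $u\in M$ so $\mu_u=0$, giving $t_{-1}u=u_{-1}t$ exactly, whence the two perpendicularity conditions coincide trivially. For (4): note $W$ and $rad((~,~))$ both relate to $\{u: u_{-1}t\in V_1^\perp\}$ (Proposition \ref{rad(())property}(3) for $rad$, and an analogous computation for $W$), and use (3): on $M$ the maps $u\mapsto u_{-1}t$ and $u\mapsto t_{-1}u$ agree, so on $M$ the defining conditions of $rad((~,~))$ and of $W$ coincide; a dimension/codimension count using $\dim V_1<\infty$ and $\mathrm{codim}\,M\leq 1$ then forces at least one of $rad((~,~))\subseteq M$, $W\subseteq M$ — if both failed, each would meet the $1$-dimensional complement of $M$, but on that complement the two perpendicularity conditions can be shown incompatible (via $B({\bf 1},t)\neq 0$ and part (2)). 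I expect part (1)'s solvability claim and the final incompatibility argument in part (4) to require the most care; everything else is a reasonably direct unwinding of the invariance formulas.
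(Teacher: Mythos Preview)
Your outline is largely on track for (1), (3), and (4), and matches the paper's approach; part (2) has a genuine gap.

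\textbf{Part (2).} Your route through $L(1)(t_{-1}u)$ does not close. You write $t_{-1}{\bf 1}=D(t)$, but in fact $t_{-1}{\bf 1}=t$; correcting this, your computation gives $L(1)(t_{-1}u)=t_{-1}L(1)u=\lambda t\in V_0$ (where $L(1)u=\lambda{\bf 1}+m$, $m\in\mathfrak m$), and the ``constraint $L(1)(t_{-1}u)\in\mathbb C t$'' you invoke is exactly what you just computed, not an independent condition --- so nothing forces $\lambda=0$. (Your phrase ``coming from $t_{-1}u\in\mathbb C t$'' is also a slip: $t_{-1}u\in V_1$.) The paper's argument is instead a one-line consequence of (1): once $M=(L(-1)t)^{\perp}$ is known, $u\in M$ gives $0=\langle u|L(-1)t\rangle=\langle L(1)u|t\rangle=B(L(1)u,t)$, and since $B(\mathfrak m,t)=0$ while $B({\bf 1},t)\neq 0$ (Poincar\'e duality with $t\in V_0^s$), this forces $L(1)u\in\mathfrak m$.

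\textbf{Solvability in (1).} You are right to flag this as delicate. The paper does not prove it intrinsically; it simply writes ``the solvability property follows from the fact that $V_1$ is solvable,'' i.e.\ it inherits solvability of the ideal $M$ from an ambient assumption on $V_1$ (cf.\ Theorem~\ref{V1solvable}). Your instinct to invoke that theorem is exactly what the paper does --- do not look for a self-contained argument inside the lemma. For the identification $M=(L(-1)t)^{\perp}$ itself, the paper computes $\langle L(-1)t|u\rangle=\langle t_{-2}{\bf 1}|u\rangle=B({\bf 1},t_0u)=-\mu_u B({\bf 1},t)$ for the forward inclusion, and then for the reverse inclusion shows $B(v_0t,a')=0$ for all $a'\in V_0$ by splitting into $a'\in\mathfrak m$ and $a'\in\mathbb C{\bf 1}$; your alternative route via $\langle L(-1)t|u\rangle=B(t,L(1)u)$ is equivalent.

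\textbf{Part (3)} is exactly as in the paper: for $u\in M$ one has $t_0u=-u_0t=0$, hence $u_{-1}t=t_{-1}u$, and the two perpendicularity conditions coincide.

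\textbf{Part (4).} The paper's argument is the concrete version of your sketch. Assume $\mathrm{rad}((~,~))\not\subseteq M$; pick $u\in\mathrm{rad}((~,~))\setminus M$ rescaled so $u_0t=t$. From $u_{-1}t=t_{-1}u+L(-1)t$ and $u_{-1}t\in V_1^{\perp}$ (Proposition~\ref{rad(())property}(3)) one gets $\langle t_{-1}u|v\rangle=-B(t,L(1)v)$ for all $v\in V_1$. Then for any $u'\in W$, specialize to $v=u$: $0=\langle t_{-1}u'|u\rangle=\langle u'|t_{-1}u\rangle=-B(t,L(1)u')=-\langle L(-1)t|u'\rangle$, so $u'\in(L(-1)t)^{\perp}=M$. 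This is your ``incompatibility'' made explicit; the codimension count is not actually needed.
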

\begin{proof}
    From the above statements, we can conclude immediately that $M$ is an ideal of $V_1$. The solvability property follows from the fact that $V_1$ is solvable.

Observe that for $u\in V_1$, we have
$$ \langle L(-1)t| u\rangle=\langle t_{-2}{\bf 1}| u\rangle=B({\bf 1}, \sum_{i=0}^{\infty}\frac{1}{i!}(L(1)^it)_{-(-2)-i-2}u)=\langle{\bf 1}|t_0u\rangle.$$
 If $u\in M$ then $u\in (L(-1) t)^{\perp_{{\langle~|~\rangle}_{V_1}}}$. Therefore, \begin{equation}\label{Msubset}M\subseteq (L(-1) t)^{\perp_{{\langle~|~\rangle}_{V_1}}}.\end{equation} Using the fact that for $v\in V_1$, $a'\in V_0$, we have $$B(v_0t,a')=\langle v_0t|a'\rangle=-\langle t| v_{0}a'+(L(1)v)_{-1}a'\rangle=-\langle t| (L(1)v)_{-1}a'\rangle.$$
Now, we assume that $v\in(L(-1) t)^{\perp_{{\langle~|~\rangle}_{V_1}}}$.
When $a'\in\mathfrak{m}$, we have $B(v_0t,a')=0$. Similarly, when $a'=\chi{\bf 1}$ for some $\chi\neq 0$, we have 
$B(v_0t,a')=-\chi\langle t| L(1)v\rangle=-\chi\langle L(-1)t| v\rangle=0$. This implies that $B(v_0t,a)=0$ for all $a\in V_0$. Because $B(~,~)$ is non-degenerate, we have $v_0t=0$ and $v\in M$. Hence, \begin{equation}\label{subsetM}(L(-1) t)^{\perp_{{\langle~|~\rangle}_{V_1}}}\subseteq M.\end{equation} By (\ref{subsetM}), (\ref{Msubset}), we have $M=(L(-1)(t))^{\perp_{{\langle~|~\rangle}_{V_1}}}$. This proves (1).

Next, we prove (2) and (3). Let $u\in M$. We then have that $0=\langle u|L(-1)t \rangle=\langle L(1)u|t\rangle=B(L(1)u,t)$. Hence, $L(1)u\in\mathfrak{m}$. Moreover, we have that $\langle u_{-1}t|v\rangle=\langle t_{-1}u-L(-1)t_0u|v\rangle=\langle t_{-1}u|v\rangle$ for all $v\in V_1$. Hence, $u_{-1}t\in V_1^{\perp_{\langle~,~\rangle_{V_1}}}$ if and only $t_{-1}u\in V_1^{\perp_{\langle~,~\rangle_{V_1}}}$.

Next, we prove (4). Suppose that there exists $u\in rad((~,~))$ but $u\not\in M$. Hence, $u_{-1}t\in V_1^{{\perp}_{\langle~,~\rangle_{V_1}} }$ and $u_0t=\mu_u t$ for some non-zero scalar $\mu_u$. We may rescale $u$ so that $\mu_u=1$. Then we have
   $$0=\langle u_{-1}t|v\rangle=\langle t_{-1}u-L(-1)t_0u|v\rangle=\langle t_{-1}u+L(-1)t|v\rangle=\langle t_{-1}u|v\rangle+B(t, L(1)v)$$
for all $v\in V_1$.
    Therefore, $\langle t_{-1}u|v\rangle=-B(t,L(1)v)$ for all $v\in V_1$. Now, we will show that $W\subseteq M$. Let $u'\in W$. Then 
   $ 0= \langle t_{-1}u'|v\rangle=\langle u'|t_{-1}v\rangle$ for all $v\in V_1$. In particular, we have 
    $0=\langle u'|t_{-1}u\rangle=-B(t,L(1)u')=-\langle L(-1)t|u'\rangle$. Hence, $u'\in (L(-1)t)^{\perp_{\langle~,~\rangle_{V_1}}}=M$. 
    \end{proof}

\begin{thm} Let $V$ be an $\mathbb{N}$-graded quasi vertex operator algebra such that $V_0=\bigoplus_{d=0}^sV_0^d$ is a graded Gorenstein algebra, $Ker(L(-1))=\mathbb{C}{\bf 1}$ and $V_0\neq\mathfrak{a}$. Assume that $V_1\neq M$. Hence, there exists $g\in V_1$ such that $g_0t=t$. Moreover, the vector $g$ satisfies the following properties: $g\not\in rad((~,~))$, and $t_{-1}g\in Ann_{V_1}(t_{-1})\cap M$. 

If $g_1g\in \mathbb{C}t\oplus\mathbb{C}^{\times}{\bf 1}$, then $V$ is a module of the rank one Heisenberg vertex operator algebra $M(1)$. In addition, $L(1)g+{\bf 1}\in\mathfrak{m}$. If $V$ is an $\mathbb{N}$-graded indecomposable $M(1)$-module, then there exist $\lambda\in\mathbb{C}$ and a nonnegative integer $k$ such that the graded Gorenstein algebra $V_0$ is isomorphic to $\mathbb{C}[x]/((x-\lambda)^{k+1})$ and one can identify $t$ with $(x-\lambda)^k+((x-\lambda)^{k+1})$.
\end{thm}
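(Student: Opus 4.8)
\textit{Overall plan.} I would first extract the vector $g$ from the hypothesis, record its two stated properties from the vertex $V_0$-algebroid axioms, use the form $B$ to evaluate $L(1)g$, correct $g$ by an element of $D(V_0)$ to manufacture a genuine rank-one Heisenberg field, and finally feed the indecomposability hypothesis into the structure of $\mathbb{N}$-graded $M(1)$-modules together with the Poincar\'e-duality structure of $V_0$.

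\textit{Existence and first properties of $g$.} Since $\psi\colon V_1\to\mathbb{C}t$, $v\mapsto v_0t$, is linear into a one-dimensional space, $V_1\neq M=Ker(\psi)$ forces $\psi$ onto, so $g_0t=t$ for some $g\in V_1$. For $t_{-1}g$: the vertex $V_0$-algebroid relation $a_{-1}(a'_{-1}v)-(a_{-1}a')_{-1}v=(v_0a)_{-1}D(a')+(v_0a')_{-1}D(a)$ at $a=a'=t$, $v=g$ gives $t_{-1}(t_{-1}g)-(t_{-1}t)_{-1}g=2t_{-1}D(t)$; since $t\in soc(V_0)$ one has $t_{-1}t=0$, and applying $D$ to $\partial(a\ast a')=a\cdot\partial(a')+a'\cdot\partial(a)$ at $a=a'=t$ gives $2t_{-1}D(t)=D(t\ast t)=0$, hence $t_{-1}(t_{-1}g)=0$, i.e.\ $t_{-1}g\in Ann_{V_1}(t_{-1})$; likewise $(t_{-1}g)_0t=-t_0(t_{-1}g)=-t_{-1}(t_0g)=t_{-1}t=0$, so $t_{-1}g\in M$. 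For $g\notin rad((~,~))$ I would use the dichotomy already established, namely $rad((~,~))\subseteq M$ or $W\subseteq M$: in the first branch $g\notin rad((~,~))$ because $g\notin M$, while under the hypothesis $g_1g\in\mathbb{C}t\oplus\mathbb{C}^{\times}{\bf 1}$ it is in any case immediate from $((g,g))=B(g_1g,t)=\beta B({\bf 1},t)\neq0$, using $B(t,t)=B(t\ast t,{\bf 1})=0$ and $B({\bf 1},t)\neq0$.

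\textit{The value of $L(1)g$ and the Heisenberg field.} Using the adjoint identity for $\langle\,\cdot\,|\,\cdot\,\rangle$ (on $g\in V_1$ only the $i=0,1$ terms of $e^{xL(1)}$ survive) one computes $B(L(1)g,t)=\langle L(1)g\,|\,t\rangle=-\langle{\bf 1}\,|\,g_0t\rangle=-\langle{\bf 1}\,|\,t\rangle=-B({\bf 1},t)$; since $B(\mathfrak{m},t)=B(\mathfrak{m}\ast t,{\bf 1})=0$ and $V_0=\mathbb{C}{\bf 1}\oplus\mathfrak{m}$, the functional $B(\,\cdot\,,t)$ equals $B({\bf 1},t)$ times the projection onto $\mathbb{C}{\bf 1}$, so the ${\bf 1}$-component of $L(1)g$ is $-1$, i.e.\ $L(1)g+{\bf 1}\in\mathfrak{m}$. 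Now write $g_1g=\beta{\bf 1}+\alpha t$ with $\beta\neq0$, and set $\tilde g=g-\tfrac{\alpha}{2}D(t)$. From $g_1D(t)=(D(t))_1g=g_0t=t$ and $(D(t))_1D(t)=0$ one gets $\tilde g_1\tilde g=\beta{\bf 1}$, while $(D(t))_0=0$ gives $\tilde g_0=g_0$ (so $\tilde g_0t=t$) and $L(1)\tilde g=L(1)g$; also $D(t)\in rad((~,~))$ keeps $\tilde g\notin rad((~,~))$. The vertex-algebra commutator formula, together with $\tilde g_0\tilde g=\tfrac12 D(\tilde g_1\tilde g)$ and $(Dv)_k=-k v_{k-1}$, then yields $[\tilde g_m,\tilde g_n]=\tfrac{m-n}{2}(\tilde g_1\tilde g)_{m+n-1}=m\beta\,\delta_{m+n,0}$; after rescaling $\tilde g\mapsto\beta^{-1/2}\tilde g$ the modes satisfy the level-one rank-one Heisenberg relations and are lower-truncated (as $V$ is $\mathbb{N}$-graded), so $V$ is a restricted level-one module for the rank-one Heisenberg Lie algebra and, $M(1)$ being simple, this realizes $V$ as an $M(1)$-module (indeed $M(1)\hookrightarrow V$ as vertex algebras).

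\textit{The indecomposable case, and the main obstacle.} Suppose now $V$ is an indecomposable $\mathbb{N}$-graded $M(1)$-module. Then $V_0$ is its bottom, annihilated by $\hat g_n$ for $n\geq1$, and $\hat g_0$ acts on $V_0$ as the algebra derivation ($\propto g_0$) sending $t$ to a nonzero multiple of $t$; a standard argument identifies $V$ with the module induced from $V_0$ viewed as a $\mathbb{C}[\hat g_0]$-module, $V\cong\mathbb{C}[\hat g_{-1},\hat g_{-2},\dots]\otimes V_0$, whose endomorphism ring is $\End_{\mathbb{C}[\hat g_0]}(V_0)$, so indecomposability forces $V_0$ to be a cyclic $\mathbb{C}[\hat g_0]$-module on which $\hat g_0$ is a single Jordan block, $V_0\cong\mathbb{C}[x]/((x-\lambda)^{k+1})$. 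The real work — and the step I expect to be the main obstacle — is then to upgrade this to an \emph{algebra} isomorphism: one must use that $\hat g_0$ is a derivation compatible with the grading $V_0=\bigoplus_d V_0^d$, that $V_0$ is Gorenstein with one-dimensional socle $\mathbb{C}t$, and that the Poincar\'e pairing $B$ is non-degenerate, to see that a cyclic $\mathbb{C}[\hat g_0]$-generator may be chosen to be an algebra generator, yielding $V_0\cong\mathbb{C}[x]/((x-\lambda)^{k+1})$ with $soc(V_0)=\mathbb{C}t$ matched to $\mathbb{C}(x-\lambda)^k$, so that $t$ is identified with $(x-\lambda)^k+((x-\lambda)^{k+1})$. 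A secondary care-point is the branch of the dichotomy above in which only $W\subseteq M$ is known when proving $g\notin rad((~,~))$.
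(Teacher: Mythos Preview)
Your proposal is correct and follows essentially the same route as the paper: normalize $g$ with $g_0t=t$, use the algebroid identity $a_{-1}(a'_{-1}v)-(a_{-1}a')_{-1}v=(v_0a)_{-1}\partial a'+(v_0a')_{-1}\partial a$ to get $t_{-1}(t_{-1}g)=0$, compute $((g,g))=B(g_1g,t)=\beta B({\bf 1},t)\neq 0$ from the hypothesis $g_1g=\beta{\bf 1}+\alpha t$ with $\beta\neq 0$, correct $g$ by a multiple of $L(-1)t$ and rescale to obtain a Heisenberg generator, and extract $q=-1$ from $\langle g_0{\bf 1}\,|\,t\rangle=0$. Your two local shortcuts---showing $t_{-1}g\in M$ by the one-line computation $(t_{-1}g)_0t=-t_{-1}(t_0g)=t_{-1}t=0$ rather than via the pairing $\langle L(-1)t\,|\,\cdot\,\rangle$, and reading off $L(1)g+{\bf 1}\in\mathfrak{m}$ directly from $B(L(1)g,t)=-B({\bf 1},t)$ before constructing the Heisenberg field---are genuine simplifications but not a different strategy; the paper does the same computations in a slightly different order. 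For the final clause the paper simply writes ``the rest is clear,'' so your more explicit discussion of the indecomposable $M(1)$-module structure (and your honest flag that promoting the $\mathbb{C}[\hat g_0]$-module isomorphism $V_0\cong\mathbb{C}[x]/((x-\lambda)^{k+1})$ to an algebra isomorphism is the real work) goes beyond what the paper supplies.
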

\begin{proof} Assume that $V_1\neq M$. So, there exists $g\in V_1$ such that $g_0t\neq 0$ and $g_0t\in\mathbb{C}t$. For simplicity, we may and shall assume that $g_0t=t$. We will show that $g\not\in rad((~,~))$. By direct calculation, we have $(g+\lambda L(-1)t)_0(g+\lambda L(-1)t)=L(-1)(\frac{1}{2}g_1g+\lambda t)$. So, $(g+\lambda L(-1)t)_0(g+\lambda L(-1)t)=0$ if and only if $\frac{1}{2}g_1g+\lambda t\in Ker(L(-1))$. Similarly, it is straightforward to show that 
 $((g+\lambda L(-1)t,g+\lambda L(-1)t))=B(g_1g,t)$.

Using the fact that $Ker(L(-1))=\mathbb{C}{\bf 1}$, we then have the following: \begin{enumerate}
    \item $\frac{1}{2}g_1g+\lambda t\in Ker(L(-1))$ if and only if there exists $\rho_g\in \mathbb{C}$ such that $\frac{1}{2}g_1g+\lambda t=\rho_g{\bf 1}$.
    \item $((g+\lambda L(-1)t,g+\lambda L(-1)t))\in\mathbb{C}^{\times} $ if and only if $\rho_g\neq 0$.
\end{enumerate} 
Let us assume that $\rho_g\in \mathbb{C}^{\times}$. Since $((g,g))=B(g_1g,t)\neq 0$, we can conclude that $g\not\in rad((~,~))$.

Next, we will show that $t_{-1}g\in Ann_{V_1}(t_{-1})\cap M$. Recall that for $a,a'\in V_0$, $u\in V_1$, we have $$a_{-1}(a'_{-1}u)-(a_{-1}a')_{-1}u=(u_0a)_{-1}L(-1)a'+(u_0a')_{-1}L(-1)a.$$ Hence, 
$t_{-1}(t_{-1}g)=(t_{-1}t)_{-1}g+(g_0t)_{-1}L(-1)t+(g_0t)_{-1}L(-1)t=2t_{-1}L(-1)t=L(-1)t_{-1}t=0$. This implies that \begin{equation}\label{t-1g1} t_{-1}g\in Ann_{V_1}(t_{-1}).\end{equation}

Because $g_0t=t$, by Proposition \ref{propmt}, we can conclude that $L(1)t_{-1}g\in\mathbb{C}t$. Since $$0=B(t,L(1)t_{-1}g)=\langle t| L(1)t_{-1}g\rangle=\langle L(-1)t| t_{-1}g\rangle,$$ we can conclude that \begin{equation}\label{t-1g2} t_{-1}g\in (L(-1)t)^{\perp_{\langle~,~\rangle_{V_1}}}=M.\end{equation} In conclusion, $t_{-1}g\in M\cap Ann_{V_1}(t_{-1})$.

Because $g\not\in M=(L(-1)t)^{\perp_{\langle~|~\rangle_{V_1}}}$, we then have that $B(L(1)g,t)=\langle L(1)g|t\rangle=\langle g|L(-1)t\rangle\neq 0.$ Therefore, there exists $q\in\mathbb{C}^{\times}$ such that $L(1)g=q{\bf 1}+u$ for some $u\in \mathfrak{m}$. Let us assume that $g_1g=\rho t+\beta {\bf 1}$. Here, $\rho,\beta\in\mathbb{C}$ and $\beta\neq 0$. We set $h=\frac{1}{\sqrt{\beta}}(g-\frac{1}{2}\rho L(-1)t)$. Clearly, we have that $h_0t=\frac{1}{\sqrt{\beta}}t$. By direct calculation, we have 
    $h_0h=0$ and $h_1h={\bf 1}$. Therefore, $h$ generates a rank one Heisenberg vertex operator algebra $M(1)$, and $V$ is $M(1)$-module.

Using the fact that $L(1)g=q{\bf 1}+u$, we have $L(1)h=\frac{1}{\sqrt{\beta}}L(1)(g-\frac{1}{2}\rho L(-1)t)=\frac{1}{\sqrt{\beta}}(q{\bf 1}+u)$. Since $0=\langle h_0{\bf 1}|t\rangle=-\langle {\bf 1}|h_0t+(L(1)h)_{-1}t\rangle=-\langle {\bf 1}|\frac{1}{\sqrt{\beta}}t+\frac{q}{\sqrt{\beta}}t\rangle=-\left(\frac{1}{\sqrt{\beta}}+\frac{q}{\sqrt{\beta}}\right)B({\bf 1},t),$ and $B({\bf 1},t)\neq 0$ we can conclude that $q=-1$ and $L(1)g=-{\bf 1}+u$. The rest is clear.\end{proof}


\section{Examples}\label{examples}

In this section, we present examples of $\mathfrak{m}$, $t$, $B(~,~)$, $Ann_{V_1}(t_{-1})$, $M$, and $g$ for three types of $\mathbb{N}$-graded vertex algebras $V = \bigoplus_{n=0}^{\infty} V_n$. The vertex algebras in the first example are in a family of rational, simple, $C_2$-cofinite self-dual vertex algebras introduced in \cite{Mason2014}. The second example considers a family of indecomposable, non-simple vertex algebras associated with non-Lie cyclic left-Leibniz algebras, as described in \cite{Barnes2024}. The third example focuses on a family of indecomposable, non-simple vertex algebras related to non-Lie, non-cyclic, nilpotent left-Leibniz algebras. We would like to respectfully acknowledge that the construction of vertex algebroids for this family of vertex algebras was initiated by Barnes, Martin, and Service as part of their FIREbird project during the summer of 2022, under the supervision of the third author of this paper. In this section, we offer a different framework for classifying vertex algebroids in the context of the third example.

\begin{ex}This example appeared in \cite{Mason2014}. We will work out several results that are relevant to this article. Let $\{e,f,h\}$ be Chavalley generators of $sl_2$. We set $H=h/2$. Recall that $L_{\widehat{sl_2}}(k,0)$ is a simple vertex operator algebra corresponding to affine $\hat{sl_2}$ at positive integral level $k$ (WZW model). $L_{\widehat{sl_2}}(k,0)$ has a canonical shift to an $\mathbb{N}$-graded simple vertex operator algebra $L_{\widehat{sl_2}}(k,0)^H$. In addition, 
\begin{itemize}
    \item $V_0$ is spanned by $(e_{-1})^p{\bf 1}$ ($0\leq p\leq k$), and 
    \item $V_1$ is spanned by $h_{-1}(e_{-1})^j{\bf 1}$, $e_{-2}(e_{-1})^i{\bf 1}$ where $0\leq i\leq k-1$, $0\leq j\leq k$. 
    \item In addition, $L_H(1)V_1$ is spanned by $(e_{-1})^p{\bf 1}$ ($0\leq p\leq k-1$), and 
\begin{eqnarray*}
V_0&\cong&\mathbb{C}[x]/(x^{k+1})\\
    V_0/L_H(1)V_1&=&\mathbb{C}(e_{-1})^k{\bf 1}+L_H(1)V_1)\\
    V_1/L_H(-1)V_0&=&Span\{h_{-1}(e_{-1})^j{\bf 1}+L_H(-1)V_0~|~0\leq j\leq k\}.
\end{eqnarray*}
\item $V_0$ is a graded Gorenstein algebra with $t=(e_{-1})^k_{-1}{\bf 1}$, $(e_{-1})^j(e_{-1})^i{\bf 1}=(e_{-1})^{i+j\mod k+1}{\bf 1}$, and the non-degenerate invariant bilinear form $B:V_0\times V_0\rightarrow\mathbb{C}$ defined by 
\begin{eqnarray*}
    B({\bf 1},t)&=&1\\
    B({\bf 1},u)&=&0\text{ when }u\in \bigoplus_{i=0}^{k-1}V_0^i\\
    B(a,a')&=&B({\bf 1},a_{-1}a')\text{ for }a,a'\in V_0.
\end{eqnarray*} Here, $V_0=\bigoplus_{i=0}^kV_0^i$, and $\mathfrak{m}=\oplus_{i=1}^{k}V_0^i$ where $V_0^i=\mathbb{C}e^i_{-1}{\bf 1}$. Also, $Ker L_H(-1)=\mathbb{C}{\bf 1}$. 
\item Since $\dim V_0/L_H(1)V_1$ is one, we have that $V$ is self-dual. 
\item It was shown in Theorem 3 and Corollary 1 of \cite{Mason2014} that 
$$Ann_{V_1}(t_{-1})=rad((~,~))=\text{the nilpotent radical of the left Leibniz algebra $V_1$}.$$ 
\item Also, by the proof of Theorem 6 of \cite{Mason2014}, we have $$M=Span\{h_{-1}(e_{-1})^j{\bf 1},~e_{-2}(e_{-1})^i{\bf 1}~|~0\leq i\leq k-1, 1\leq j\leq k\}.$$
\item $V_1$ is a solvable left Leibniz algebra.
\begin{proof}
Recall that for $b\in V_1$ and $a\in V_0$, $b_{-1}a=a_{-1}b+(b_0a)_{-2}{\bf 1}.$ So, we have
\begin{eqnarray*}
    (h_{-1}a)_0h_{-1}a'&=&\sum_{i=0}^{\infty}(h_{-1-i}a_{i})+a_{-1-i}h_i)h_{-1}a'\\
    &=&h_{-1}a_0h_{-1}a'+a_{-1}h_0h_{-1}a'+a_{-2}h_1h_{-1}a'\\
    &=&h_{-1}(a_0h)_{-1}a'+a_{-1}h_{-1}h_0a'+a_{-2}(h_1h)_{-1}a'\\
    &=&-h_{-1}(h_0a)_{-1}a'+a_{-1}h_{-1}h_0a'+a_{-2}(h_1h)_{-1}a'\text{ for }a,a'\in V_0.
\end{eqnarray*}
Since $h_0$ acts semisimply on $V_0$ and $h_1h=2{\bf 1}$, we have that 
\begin{eqnarray*}
    (h_{-1}a)_0h_{-1}a'&=&-\lambda h_{-1}a_{-1}a'+\mu a_{-1}h_{-1}a'+a_{-2}(h_1h)_{-1}a'\\
    &=&-\lambda h_{-1}a_{-1}a'+\mu (h_{-1}a_{-1}+(a_0h)_{-2})a'+a_{-2}(h_1h)_{-1}a'\\
    &=&-\lambda h_{-1}a_{-1}a'+\mu (h_{-1}a_{-1}-\lambda a_{-2})a'+2a_{-2}a'\\
    &=&(\mu-\lambda)h_{-1}a_{-1}a'+(2-\lambda)a_{-2}a'.
\end{eqnarray*} Here $h_0a=\lambda a$, $h_0a'=\mu a'$. Hence, 
$$(h_{-1}a)_0h_{-1}a'+L_H(-1)V_0=(\mu-\lambda)h_{-1}a_{-1}a'+L_H(-1)V_0.$$ This implies that $V_1/L_H(-1)V_0$ is solvable. Since $L_H(-1)V_0$ is an abelian Lie algebra, we can conclude that $V_1$ is a solvable Leibniz algebra.
\end{proof}
\end{itemize}
\end{ex}
We now present background material that will be highly beneficial for understanding the following two examples.


\begin{prop}\cite{GMS} Let $\mathcal{A}$ be a unital commutative associative algebra and let $\mathcal{B}$ be a vertex $\mathcal{A}$-algebroid. Then one can construct an $\mathbb{N}$-graded vertex algebra $V_{\mathcal{B}}$ such that $(V_{\mathcal{B}})_{(0)}=\mathcal{A}$ and $(V_{\mathcal{B}})_{(1)}=\mathcal{B}$ (under the linear map $v\mapsto v(-1){\bf 1}$) and $V_{\mathcal{B}}$ as a vertex algebra is generated by $\mathcal{A}\oplus \mathcal{B}$. Furthermore, for any $n\geq 1$,
\begin{eqnarray*}
&&(V_{\mathcal{B}})_{(n)}\\
&&=span\{b_1(-n_1).....b_k(-n_k){\bf 1}~|~b_i\in \mathcal{B},n_1\geq...\geq n_k\geq 1, n_1+...+n_k=n\}.
\end{eqnarray*}

\end{prop}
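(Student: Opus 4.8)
The plan (following \cite{GMS}) is to realise $V_{\mathcal B}$ as an explicit quotient of an induced module over the ``current'' Lie algebra attached to the $1$-truncated conformal algebra $C=\mathcal A\oplus\mathcal B$ associated with $\mathcal B$ (the Proposition of \cite{LiY} above), and then to pin down its low-degree components by a reconstruction argument. Set $\wt a=0$ for $a\in\mathcal A$ and $\wt b=1$ for $b\in\mathcal B$, and form the Lie algebra
\[
\mathfrak g(C)=\bigl(C\otimes\C[t,t^{-1}]\bigr)\big/\bigl(\partial\otimes 1+1\otimes\tfrac{d}{dt}\bigr)\bigl(C\otimes\C[t,t^{-1}]\bigr),
\]
writing $u(n)$ for the image of $u\otimes t^{n}$, with bracket $[u(m),v(n)]=\sum_{i\ge 0}\binom{m}{i}(u_{i}v)(m+n-i)$ (a finite sum, as only $u_{0}v$ and $u_{1}v$ can be nonzero in $C$). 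The Derivation, Commutativity and Associativity axioms of a $1$-truncated conformal algebra are exactly what is needed to check that $\mathfrak g(C)$ is a well-defined $\Z$-graded Lie algebra, graded by $\deg u(n)=\wt u-n-1$; moreover $u_{i}u'=0$ for $u,u'\in\mathcal A$, $i\ge 0$, shows that the span $\mathfrak p$ of $\{u(n)\mid u\in C,\ n\ge 0\}$ is a Lie subalgebra.

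Next put $M=U(\mathfrak g(C))\otimes_{U(\mathfrak p)}\C\vac$. On $M$ the fields $\sum_{n}u(n)x^{-n-1}$ $(u\in C)$ are pairwise mutually local, because the bracket above is a finite expression in the modes; hence by the theory of local systems of vertex operators / the reconstruction theorem \cite{LepowskyLi2004} (equivalently, by forming the universal enveloping vertex algebra of the vertex Lie algebra $C$), $M$ carries the structure of an $\N$-graded vertex algebra with vacuum $\vac$, generated by the image of $C$, with $Y(u(-1)\vac,x)=\sum_{n}u(n)x^{-n-1}$ and $D(u(-1)\vac)=(\partial u)(-1)\vac$; the grading is by $\deg$, which is nonnegative on every creation operator. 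In $M$ one already has $u_{0}v=[u,v]$, $u_{1}v=\langle u,v\rangle$ and $u_{0}a=\pi(u)(a)$ for $u,v\in\mathcal B$, $a\in\mathcal A$, since these are built into the bracket of $\mathfrak g(C)$. Let $I$ be the vertex-algebra ideal of $M$ generated by
\[
\bigl(a_{-1}a'_{-1}-(a*a')_{-1}\bigr)\vac\ \text{ and }\ \bigl(a_{-1}b_{-1}-(a\cdot b)_{-1}\bigr)\vac,\qquad a,a'\in\mathcal A,\ b\in\mathcal B,
\]
and set $V_{\mathcal B}:=M/I$. A short computation using the Borcherds iterate formula and skew-symmetry shows that the remaining vertex $\mathcal A$-algebroid axioms (the nonassociativity correction in $a\cdot(a'\cdot v)$, the identity $v_{-1}a=a\cdot v+\partial(\pi(v)(a))$, and so on) hold automatically in $V_{\mathcal B}$; thus $V_{\mathcal B}$ is an $\N$-graded vertex algebra generated by the image of $\mathcal A\oplus\mathcal B$.

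It then remains to compute the graded pieces. Using the commutator formula together with the two families of relations in $I$, and the reduction $a_{-1-j}\vac=\tfrac1j(\partial a)_{-j}\vac$ for $j\ge 1$, an induction on weight (``reconstruction'') shows that every homogeneous element of $V_{\mathcal B}$ of weight $n\ge 1$ is a linear combination of monomials $b_{1}(-n_{1})\cdots b_{k}(-n_{k})\vac$ with $b_{i}\in\mathcal B$ and $n_{1}\ge\cdots\ge n_{k}\ge 1$, $\sum n_{i}=n$, while in weights $0$ and $1$ it gives $(V_{\mathcal B})_{0}=\mathcal A\cdot\vac$ and $(V_{\mathcal B})_{1}=\mathcal B\cdot\vac$. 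The genuine difficulty is the converse — that no collapse occurs in the bottom two degrees, i.e. that $a\mapsto a_{-1}\vac$ and $b\mapsto b_{-1}\vac$ are injective on $\mathcal A$ and on $\mathcal B$. I would handle this by first establishing the universal property of $V_{\mathcal B}$ (it is initial among $\N$-graded vertex algebras $U$ equipped with an algebra map $\mathcal A\to U_{0}$ and a compatible morphism $\mathcal B\to U_{1}$ of vertex $\mathcal A$-algebroids), and then separating points of $\mathcal A$ and of $\mathcal B$ by applying it to a suitably constructed test vertex algebra — for instance one obtained from an explicit normal-ordered realisation of the modes $u(n)$ on a Fock-type space — or, alternatively, by a PBW/associated-graded argument carried out directly on $M/I$ relative to the filtration by total mode length. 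This ``no-collapse'' step is the main obstacle; granting it, functoriality of $\mathcal B\mapsto V_{\mathcal B}$ and the spanning set above yield $(V_{\mathcal B})_{(0)}=\mathcal A$, $(V_{\mathcal B})_{(1)}=\mathcal B$ (as a vertex $\mathcal A$-algebroid), and the stated description of $(V_{\mathcal B})_{(n)}$ for $n\ge 1$.
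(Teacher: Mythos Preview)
The paper does not supply its own proof of this proposition; it is quoted as a result from \cite{GMS} (and the closely related construction is worked out in detail in \cite{LiY}). Your outline is essentially the construction given in those references: build the Lie algebra $\mathfrak g(C)$ attached to the $1$-truncated conformal algebra $C=\mathcal A\oplus\mathcal B$, induce from the nonnegative part to obtain the universal enveloping vertex algebra $M$, and then quotient by the ideal generated by the relations $a(-1)a'(-1)\vac-(a*a')(-1)\vac$ and $a(-1)b(-1)\vac-(a\cdot b)(-1)\vac$.

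You have correctly isolated the only nontrivial point, namely the ``no-collapse'' assertion that $\mathcal A\to (V_{\mathcal B})_0$ and $\mathcal B\to (V_{\mathcal B})_1$ are injective. In \cite{GMS} and \cite{LiY} this is not handled by a universal-property / test-object argument as you propose, but by a direct PBW-type analysis: one shows that the ideal $I$ is $\N$-graded and computes $I_0$ and $I_1$ explicitly (they lie in the span of the generating relations and their $D$-derivatives), verifying by hand that these spaces map to zero in $\mathcal A$ and $\mathcal B$ only when the relation itself is trivial. Your alternative suggestion of an associated-graded argument on $M/I$ is viable and amounts to the same computation; the universal-property route would work in principle but requires producing a concrete receiving vertex algebra, which in practice is no easier than the direct calculation.
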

Let $\mathfrak{b}$ denote the 2-dimensional subalgebra of $Vir^+$, spanned by $L_0$ and $L_1$. By a weight $\mathfrak{b}$-module we mean a $\mathfrak{b}$-module on which $L(0)$ acts semisimply.
\begin{dfn}\cite{Li2019} A {\em 1-truncated $sl_2$-module} is a weight $\mathfrak{b}$-module $U=U_0\oplus U_1$ where $L(0)|_{U_0}=0$, $L(0)|_{U_1}=1$, equipped with a linear map $L(-1)\in \Hom(U_0,U_1)$.
\end{dfn} 

Note that for any weight $\mathfrak{b}$-module $U=U_0\oplus U_1$, we have 
$$L(1):U_1\rightarrow U_0,~L(1)|_{U_0}=0.$$
\begin{prop}\label{VBsemiconformal}\cite{Li2019}\ \ 

\begin{enumerate}
    \item 

    Let $U$ be a $\mathfrak{b}$-module. We set $L(U)=U\otimes \mathbb{C}[t,t^{-1}]$. Then $L(U)$ is a $Vir^+$-module with the action given by 
    \begin{eqnarray*}
        &&L(m)(u\otimes t^n)\\
        &=&-(m+n+1)(u\otimes t^{m+n})+(m+1)(L(0)u\otimes t^{m+n})\\
        &&\ \ +\frac{1}{2}m(m+1)(L(1)u\otimes t^{m+n-1})
    \end{eqnarray*}
    for $m\in\mathbb{Z}_{\geq -1}$, $u\in U$, $n\in\mathbb{Z}$.
    \item Let $\mathcal{B}$ be a vertex $\mathcal{A}$-algebroid equipped with a weight $\mathfrak{b}$-module structure on $\mathcal{A}\oplus\mathcal{ B}$ with $L(0)|_{\mathcal{A}}=0$ and $L(0)|_{\mathcal{B}}=1$ such that $L(1)\partial(\mathcal{A})=0$, and 
    \begin{eqnarray}L(1)(u_0v)&=&(L(1)u)_0v+u_0L(1)v\text{ for }u,v\in \mathcal{B},\label{cond1}\\
    L(1)(a\cdot b)&=&a*L(1)b-a_0b\text{ for }a\in \mathcal{A}, b\in \mathcal{B}.\label{cond2}
    \end{eqnarray} Then the $\mathfrak{b}$-module structure on $\mathcal{A}\oplus \mathcal{B}$  can be extended to a $Vir^+$-module structure on $V_{\mathcal{B}}$ which is uniquely determined by 
    \begin{eqnarray*}
        L(-1)&=&D\\
        {[L(m), Y(u,z)]}&=&\sum_{i=0}^2\binom{m+1}{i}z^{m+1-i}Y(L(i-1)u,z)
    \end{eqnarray*} for $u\in \mathcal{A}\oplus \mathcal{B}$, $m\in\mathbb{Z}_{\geq -1}$. Furthermore, $V_{\mathcal{B}}$ is a semiconformal-vertex algebra. 
    \end{enumerate}
\end{prop}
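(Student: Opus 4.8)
I would handle the two parts of the proposition separately: part (1) by a direct verification of the $Vir^+$ relations, and part (2) by transporting the $Vir^+$-action onto $V_{\mathcal{B}}$ through the Lie-algebra presentation of $V_{\mathcal{B}}$ attached to the vertex $\mathcal{A}$-algebroid $\mathcal{B}$.

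For part (1), I would introduce, for the $\mathfrak{b}$-module $U=U_0\oplus U_1$, the operators $P_{m,n}=-(m+n+1)\,\mathrm{id}_U+(m+1)L(0)$ and $Q_m=\tfrac12 m(m+1)L(1)$ on $U$, so that $L(m)(u\otimes t^n)=(P_{m,n}u)\otimes t^{m+n}+(Q_m u)\otimes t^{m+n-1}$. Expanding $L(m)L(n)(u\otimes t^p)$ and collecting powers of $t$: the coefficient of $t^{m+n+p-2}$ in the commutator is $(Q_mQ_n-Q_nQ_m)u=0$ since $Q_m,Q_n$ are both scalar multiples of $L(1)$; the coefficient of $t^{m+n+p}$ is $(P_{m,n+p}P_{n,p}-P_{n,m+p}P_{m,p})u$, a polynomial identity in the single operator $L(0)$ which one checks equals $(m-n)P_{m+n,p}u$; and the coefficient of $t^{m+n+p-1}$ reduces, using only $L(0)L(1)-L(1)L(0)=-L(1)$ on $U$ (from $[L_0,L_1]=-L_1$), to $(m-n)Q_{m+n}u$. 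Hence $[L(m),L(n)]=(m-n)L(m+n)$ on $L(U)$; no central term appears because $m^3-m=0$ for $m\in\{-1,0,1\}$, and those are the only $m,n\geq -1$ with $m+n=0$. This is routine, so I would display only the three coefficient computations. I would also note that, under the identification $u(n)\leftrightarrow u\otimes t^n$ of a mode with an element of $U\otimes\mathbb{C}[t,t^{-1}]$, this is precisely the form that the commutator $[L(m),u(n)]$ must take for the semiconformal bracket formula to hold; this is what links (1) to (2).

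For part (2), I recall that by GMS the vertex algebra $V_{\mathcal{B}}$ is obtained from a Lie algebra $\mathfrak{L}=\mathfrak{L}_{\mathcal{B}}$ attached to $\mathcal{B}$ — spanned (modulo a central element) by symbols $u(n)$, $u\in\mathcal{A}\oplus\mathcal{B}$, $n\in\mathbb{Z}$, with bracket governed by the $0$th and $1$st products — as a quotient $\widehat{M}/J$ of a Verma-type $\mathfrak{L}$-module $\widehat{M}$ by the submodule $J$ generated by the $1$-truncated conformal algebra relations and the vertex-algebra iterate relations; alternatively one can invoke the universal property of $V_{\mathcal{B}}$. Step A: take the part-(1) formula (with $U=\mathcal{A}\oplus\mathcal{B}$) as the prescription for how $Vir^+$ acts on the spanning symbols $u(n)\leftrightarrow u\otimes t^n$ of $\mathfrak{L}$, and verify that $Vir^+$ then acts on $\mathfrak{L}$ by derivations of its Lie bracket; the $L(0)$-case is automatic from the grading, the $L(-1)=D=\partial$-case is built into the vertex $\mathcal{A}$-algebroid axioms, and the $L(1)$-case is exactly what the hypotheses $L(1)\partial(\mathcal{A})=0$, $L(1)(u_0v)=(L(1)u)_0v+u_0L(1)v$, and $L(1)(a\cdot b)=a*L(1)b-a_0b$ are designed to ensure. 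Form $\widehat{\mathfrak{L}}=Vir^+\ltimes\mathfrak{L}$. Step B: extend the $\mathfrak{L}$-action on $\widehat{M}$ to $\widehat{\mathfrak{L}}$, declaring $L(m){\bf 1}=0$ for $m\geq -1$, and show the relation submodule $J$ is $Vir^+$-stable, so the action descends to $V_{\mathcal{B}}$. Step C: from the semidirect bracket one reads off $L(-1)=D$ and $[L(m),Y(u,x)]=\sum_{i=0}^{2}\binom{m+1}{i}x^{m+1-i}Y(L(i-1)u,x)$ for $u\in\mathcal{A}\oplus\mathcal{B}$; since $\mathcal{A}\oplus\mathcal{B}$ generates $V_{\mathcal{B}}$ and the set of $v$ obeying this bracket (for all $m$) is closed under $v\mapsto u_nv$ by a standard computation using associativity of $Y$, the formula holds on all of $V_{\mathcal{B}}$, so $V_{\mathcal{B}}$ is semiconformal; together with $\dim(V_{\mathcal{B}})_{(n)}<\infty$, $(V_{\mathcal{B}})_{(n)}=0$ for $n<0$, and $\{L(-1),L(0),L(1)\}$ providing the $\mathfrak{sl}(2)$-triple, $V_{\mathcal{B}}$ is a semiconformal-vertex algebra. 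Uniqueness follows since the bracket formula, with $L(-1)=D$ and the given $L(0),L(1)$ on $\mathcal{A}\oplus\mathcal{B}$, forces the value of every $L(m)$ on each spanning vector $b_1(-n_1)\cdots b_k(-n_k){\bf 1}$ inductively, using $L(m){\bf 1}=0$.

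The main obstacle is Step B, the $Vir^+$-stability of the relation submodule $J$. The low-weight relations — the $1$-truncated conformal algebra axioms of $\mathcal{A}\oplus\mathcal{B}$ — follow immediately from the three hypotheses, but one must also check that each $L(m)$, acting as a derivation of $\mathfrak{L}$, carries the higher vertex-algebra iterate relations that cut out $V_{\mathcal{B}}$ into further such relations; making this rigorous requires the careful bookkeeping with $e^{xD}$-conjugation and the iterate formula and constitutes the technical heart of the argument.
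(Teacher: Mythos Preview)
The paper does not actually prove this proposition: it is quoted verbatim from \cite{Li2019} and stated without proof, to be used as background input in the Examples section. So there is no ``paper's own proof'' to compare your attempt against.

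That said, your outline is a reasonable reconstruction of how such a result is established. Part (1) is indeed a direct (and elementary) check of the $Vir^+$ relations, and your decomposition into the $t^{m+n+p}$, $t^{m+n+p-1}$, $t^{m+n+p-2}$ coefficients is the natural way to organize it. For part (2), your strategy of letting $Vir^+$ act by derivations on the Lie algebra $\mathfrak{L}$ underlying the GMS construction, forming the semidirect product, and then descending to the quotient $V_{\mathcal{B}}$ is the standard route (and is essentially how \cite{Li2019} proceeds). You correctly identify Step~B --- stability of the relation ideal under $Vir^+$ --- as the genuine technical point, and correctly flag that the hypotheses \eqref{cond1}, \eqref{cond2}, and $L(1)\partial(\mathcal{A})=0$ are precisely what is needed for $L(1)$ to act compatibly with the $1$-truncated conformal algebra relations. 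One small caveat: your uniqueness argument and the ``closed under $v\mapsto u_nv$'' step in Step~C are fine in spirit, but in a full write-up you would need to be explicit that the inductive closure uses the Borcherds commutator formula together with $L(m){\bf 1}=0$, and that the semiconformal bracket formula for general $v\in V_{\mathcal{B}}$ (not just $v\in\mathcal{A}\oplus\mathcal{B}$) follows from the generator case by the iterate/associativity identity.
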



\begin{ex} Let $\mathcal{A}$ be a unital commutative associative algebra with the identity $1_{\mathcal{A}}$. Let $\mathcal{B}$ be a vertex $\mathcal{A}$-algebroid such that $\mathcal{B}$ is a cyclic non-Lie left Leibniz algebra with $\dim \mathcal{B}=2$ and $\mathcal{B}\neq \mathcal{A}\partial(\mathcal{A})$, and $Ker(\partial)=\mathbb{C}1_{\mathcal{A}}$. There exists $b\in \mathcal{B}$ such that $\{b,b_0b\}$ is a basis of $\mathcal{B}$. We set $a=b_1b$. Then $\{b,\partial(a)\}$ is a basis of $\mathcal{B}$ and $\{1_{\mathcal{A}},a\}$ is a basis for $\mathcal{A}$. 

    Assume that $b_0(b_0b)=b_0b$ and $Ker\partial=\mathbb{C}1_{\mathcal{A}}$. If we set $a*a=\alpha_1 1_A+\alpha_2 a$, then we have 
    \begin{enumerate}
        \item $\alpha_1=-\frac{1}{4}\alpha_2^2$, $b_0a=a-\frac{1}{2}\alpha_2 1_\mathcal{A}$,
        \item $a\cdot b=\frac{1}{2}\alpha_2 b+(\frac{1}{2}\alpha_2-1)\partial(a)$,
        \item $a*a=\alpha_2 a-\frac{1}{4}\alpha_2^21_\mathcal{A}$, $a\cdot \partial(a)=\frac{\alpha_2}{2}\partial(a)$,
        \item $\mathcal{A}\cong \mathbb{C}[x]/((x-\frac{1}{2}\alpha_2)^2)$
    \end{enumerate} and $(a-\frac{1}{2}\alpha_2 1_{\mathcal{A}})$ is the unique maximal ideal of $\mathcal{A}$. 
\begin{itemize}    
\item We have $(V_{\mathcal{B}})_0=\mathcal{A}\cong \mathbb{C}[x]/((x-\frac{1}{2}\alpha_2)^2)$ with $\mathfrak{m}=(a-\frac{1}{2}\alpha_2 1_{\mathcal{A}})$ and $t=a-\frac{1}{2}\alpha_2 1_{\mathcal{A}}$. Next, we let $B(~,~)$ be a non-degenerate invariant bilinear form on $(V_{\mathcal{B}})_0$ defined by 
$$B(1_{\mathcal{A}},t)=1,~B(1_{\mathcal{A}},1_{\mathcal{A}})=0,\text{ and }B(t,t)=0.$$ Then $(V_{\mathcal{B}})_0$ is a graded Gorenstein algebra.
\item $V_{\mathcal{B}}$ is a self-dual semiconformal-vertex algebra such that 
\begin{eqnarray*}
    &&L(0)(V_{\mathcal{B}})_0=L(0)\mathcal{A}=0,~ L(0)|_{(V_{\mathcal{B}})_1}=L(0)|_{\mathcal{B}}=1,\\ &&L(1)\partial(V_0)=0\text{ and }L(1)b=1_{\mathcal{A}}.
\end{eqnarray*}

\begin{proof} We assume that $L(0)$ and $L(1)$ satisfy conditions in Proposition \ref{VBsemiconformal} (2) and equations (\ref{cond1}), (\ref{cond2}). Hence, the following statements hold $$L(0)(V_{\mathcal{B}})_0=L(0)\mathcal{A}=0,~L(0)|_{(V_{\mathcal{B}})_1}=L(0)|_{\mathcal{B}}=1,\text{ and }L(1)D(V_0)=0.$$ Now, we will study action of $L(1)$ on $b$. We set $L(1)b=x_11_{\mathcal{A}}+x_2 a$. Since $L(1)a_{-1}b=a_{-1}L(1)b-a_0b$, $a_0b=-a+\frac{1}{2}\alpha_2 1_\mathcal{A}$ and $a_{-1}b=a\cdot b=\frac{1}{2}\alpha_2 b+(\frac{1}{2}\alpha_2-1)\partial(a)$, we have $L(1)a_{-1}b=\frac{1}{2}\alpha_2 L(1)b=\frac{1}{2}\alpha_2 (x_11_\mathcal{A}+x_2 a),$ and
\begin{eqnarray*}
      \frac{1}{2}\alpha_2 (x_11_\mathcal{A}+x_2 a)&=&a_{-1}L(1)b-a_0b\\
      &=&a_{-1}(x_11_\mathcal{A}+x_2 a)+a-\frac{1}{2}\alpha_2 1_\mathcal{A}\\
      &=&x_1a+x_2 a*a+a-\frac{1}{2}\alpha_2 1_\mathcal{A}\\
       &=&x_1a+x_2 (\alpha_2 a-\frac{1}{4}\alpha_2^21_\mathcal{A})+a-\frac{1}{2}\alpha_2 1_\mathcal{A}\\
       &=&(x_1+x_2\alpha_2+1)a+((-\frac{1}{4}\alpha_2^2)x_2-\frac{1}{2}\alpha_2 )1_\mathcal{A}.
  \end{eqnarray*}
  Since $\{1_\mathcal{A},a\}$ is a basis of $\mathcal{A}$, we can conclude that 
  $\frac{1}{2}\alpha_2x_2=x_1+x_2\alpha_2+1$ and $
  \frac{1}{2}\alpha_2x_1=((-\frac{1}{4}\alpha_2^2)x_2-\frac{1}{2}\alpha_2 )$. So, we have $x_1=-\frac{1}{2}\alpha_2x_2-1$ and $L(1)b=(-\frac{1}{2}\alpha_2x_2-1)1_\mathcal{A}+x_2a$. 

  Now, we extend $B(~,~)$ to an invariant bilinear form $\langle~,~\rangle$ on $V_{\mathcal{B}}$. Using the fact that $B(1_\mathcal{A},t)=B(1_\mathcal{A},a-\frac{1}{2}\alpha_2 1_\mathcal{A})\neq 0$, we have $
  B(1_\mathcal{A}, L(1)b)=\langle 1_\mathcal{A},(-\frac{1}{2}\alpha_2x_2-1)1_\mathcal{A}+x_2a\rangle=0$.
  This implies that $0=B(1_\mathcal{A},-1_\mathcal{A}+x_2t) =x_2B( 1_\mathcal{A},t)$. Hence, $x_2= 0$, $L(1)b=-1_\mathcal{A}$ and $\dim (V_{\mathcal{B}})_0/L(1){(V_\mathcal{B})}_1=1$.
\end{proof}
\item If $\alpha_2=0$, we have $rad((~,~))=V_1$. Also, when $\alpha_2\neq 0$, we have $rad((~,~))=\mathbb{C}\partial(t)$.

\begin{proof}
  We will study $rad((~,~))$. By direct calculation, we have 
\begin{eqnarray*}
      &&((b,b))=B(b_1b,t)=B(a,t)=B(a-\frac{1}{2}\alpha_2 1_\mathcal{A}+\frac{1}{2}\alpha_2 1_\mathcal{A},t)=\frac{1}{2}\alpha_2B(1_\mathcal{A},t),\\
      &&((b,\partial(a))=B(b_1\partial(a),t)=B(b_0a,t)=B(a-\frac{1}{2}\alpha_2 1_\mathcal{A},t)=0, \text{ and }\\
&&((\partial(a),\partial(a)))=B(\partial(a)_1\partial(a),t)=0.
      \end{eqnarray*}
Clearly, if $\alpha_2=0$, we have $rad((~,~))=V_1$. Also, when $\alpha_2\neq 0$, we have $rad((~,~))=\mathbb{C}\partial(t)$. 
\end{proof}
\item Assume that $\alpha_2\neq 0$. We have $M=\mathbb{C}\partial(t)$ and $g=b$. When $\alpha_2=2$, $Ann_{{(V_{\mathcal{B}})}_1}(t_{-1})={(V_{\mathcal{B}})}_1$. If $\alpha_2\neq 2$, we have $Ann_{{(V_{\mathcal{B}})}_1}(t_{-1})=\mathbb{C}\partial(t)$. In addition, when $\alpha_2=1$, we then have that $h=b-\frac{1}{2}\partial(a)$ generates $M(1)$, $h_0$ acts semisimply on ${(V_{\mathcal{B}})}_0\oplus {(V_{\mathcal{B}})}_1$. Moreover, $h_0t=t$, $h_0h=0$, and $h_0\partial(t)=\partial(t)$.  

\begin{proof} Since $\partial(a)=\partial(t)\in M$ and $b_0t=b_0a=t$, we can conclude that $M=\mathbb{C}\partial(t)$. In addition, $g=b$ for this case. Observe that 
$$t_{-1}\partial(t)=t_{-1}\partial(a)=a_{-1}\partial(a)-\frac{1}{2}\alpha_2 (1_\mathcal{A})_{-1}\partial(a)=\frac{1}{2}\alpha_2\partial(a)-\frac{1}{2}\alpha_2 \partial(a)=0,$$ and $$t_{-1}b=a_{-1}b-\frac{1}{2}\alpha_2 (1_\mathcal{A})_{-1}b=\frac{1}{2}\alpha_2b+(\frac{1}{2}\alpha_2-1)\partial(t)-\frac{1}{2}\alpha_2 b=(\frac{1}{2}\alpha_2-1)\partial(t).$$ If $\alpha_2=2$ then $Ann_{{V_{\mathcal{B}}}_1}(t_{-1})=V_1$. When $\alpha_2\neq 2$, we have $Ann_{{V_{\mathcal{B}}}_1}(t_{-1})=\mathbb{C}\partial(t)$.

Since $g_1g=b_1b=a=t+\frac{1}{2}\alpha_2 1_\mathcal{A}\in\mathbb{C}t+\mathbb{C}^{\times}1_\mathcal{A}$, we can conclude that $V_{\mathcal{B}}$ is a module of the rank one Heisenberg vertex operator algebra $M(1)$. Now, we will find an element in ${(V_{\mathcal{B}})}_1$ that generate $M(1)$. Observe that 
$(b+y\partial(a))_1(b+y\partial(a))=a+y2b_0a=a+2yt=a-\frac{1}{2}\alpha_2 1_\mathcal{A}+\frac{1}{2}\alpha_2 1_\mathcal{A}+2yt=\frac{1}{2}\alpha_2 1_\mathcal{A}+(2y+1)t$. When we set $y=-\frac{1}{2}$, we have $(b-\frac{1}{2}\partial(a))_1(b-\frac{1}{2}\partial(a))=\frac{1}{2}\alpha_2 1_\mathcal{A}$. Notice that $(b-\frac{1}{2}\alpha_2\partial(a))_0t=t,$ and 
 $(b-\frac{1}{2}\alpha_2\partial(a))_0(b-\frac{1}{2}\alpha_2\partial(a))=b_0b-\frac{1}{2}\alpha_2 b_0\partial(a)=\frac{1}{2}\partial(a)-\frac{1}{2}\alpha_2 \partial(t)=\frac{1}{2}\partial(t)-\frac{1}{2}\alpha_2 \partial(t).$ If $\alpha_2=1$ then $(b-\frac{1}{2}\partial(a))_0(b-\frac{1}{2}\partial(a))=0$. 
 
 In summary, when $\alpha_2=1$, we have $t=a-\frac{1}{2}1_\mathcal{A}$, $L(1)({V_{\mathcal{B}}}_1)=\mathbb{C}1_\mathcal{A}$, $\dim {V_{\mathcal{B}}}_0/L(1){V_{\mathcal{B}}}_1=1$, $M=Ann_{{V_{\mathcal{B}}}_1}(t_{-1})=rad((~,~))=\mathbb{C}\partial(t)$. In addition, $g=b$, $h=b-\frac{1}{2}\partial(a)$ generates $M(1)$, and $h_0$ acts semisimply on $({V_{\mathcal{B}}})_0\oplus ({V_{\mathcal{B}}})_1$. Furthermore, $h_0t=t$, $h_0h=0$, and $h_0\partial(t)=\partial(t)$. 
 \end{proof}
 \end{itemize}
\end{ex}

\begin{ex} Let $\mathcal{B}$ be a non-Lie nilpotent non-cyclic left Leibniz algebra with a basis $\{u,v,w\}$ such that the nonzero left-Leibniz products are given by $u_0v=w$, $v_0w=w$.

\begin{itemize}
\item Let $\mathcal{A}$ be a unital commutative associative algebra with the identity ${\bf 1}_\mathcal{A}$. Then $\mathcal{B}$ is a vertex $\mathcal{A}$-algebroid such that $Ker(\partial)=\mathbb{C}{\bf 1}_\mathcal{A}$. Then the following statements hold: 
\begin{enumerate}
   \item $\mathcal{A}$ is a 2-dimensional commutative associative algebra with a basis $\{{\bf 1}_\mathcal{A}, a\}$ such that $a*a=0$. Here, $a=u_1v$. So, as commutative associative algebras, $\mathcal{A}\cong\mathbb{C}[x]/(x^2)$.
    \item We have $w=\partial(a)$, $u_1u=u_1w=w_1w=0$, $v_1\partial(a)=a$, $v_0a=a$, $u_0a=0$ $a*a=0$, $a\cdot\partial(a)=0$, $a\cdot u=0$. In addition, there exists $\rho\in\mathbb{C}$ such that $v_1v=(1+\rho)1_\mathcal{A}$, $a\cdot v=\rho a$. Hence, $\mathcal{A}\partial(\mathcal{A})=\mathbb{C}\partial(a)$ and $\mathcal{B}/\mathcal{A}\partial(\mathcal{A})$ is an abelian Lie algebra. 
   
\end{enumerate}

\begin{proof}
We set $a=u_1v$. Since $w=u_0v=\partial(u_1v)$ ,
we have $\partial(a)=w$  
and $a\not\in\mathbb{C}{\bf 1}_\mathcal{A}$. We will show that $\{1_\mathcal{A},a\}$ is a basis of $\mathcal{A}$. Let $a'\in \mathcal{A}$. Then $\partial(a')\in V_1$. We set $\partial(a')=\alpha_1u+\alpha_2v+\alpha_3\partial(a)$. Since $u_0v=w, v_0v=0$, and $(\partial(a'))_0v=(\alpha_1 u+\alpha_2v+\alpha_3\partial(a))_0v$, we have 
$\alpha_1w=0$. Therefore $\alpha_1=0$. Similarly, since $u_0w=0$, $v_0w=w$, and $(\partial(a'))_0w=(\alpha_2v+\alpha_3\partial(a))_0w$, we have $0=\alpha_2\partial(a)$. Hence, $\alpha_2=0$ and $\partial(a')=\alpha_3\partial(a)$. This implies that $\partial(a'-\alpha_3 a)=0$. Since $Ker(\partial)=\mathbb{C}1_\mathcal{A}$, we can conclude that $\{1_\mathcal{A},a\}$ is a basis of $\mathcal{A}$.

Next, we will show that $u_0a=0$, $w_1w=u_1u=u_1w=0$. Also, we will calculate $a\cdot u,~a\cdot v,~a\cdot w$, and $v_0a$. Since $\partial(u_0a)=u_0\partial(a)=u_0w=0$, $\partial(v_0a)=v_0\partial(a)=v_0w=w=\partial(a)$, we can conclude that $u_0a=\mu_{u,a}{\bf 1}_\mathcal{A}$ and $v_0a=a+\mu_{v,a}{\bf 1}_\mathcal{A}$ for some $\mu_{u,a},~\mu_{v,a}\in\mathbb{C}$. Because $u_0 u=v_0 v=0$, we have $\partial(u_1 u)=\partial(v_1 v)=0$. Hence, $u_1 u,v_1 v\in\mathbb{C}{\bf 1}_\mathcal{A}$. Also, 
$w_1w=\partial(a)_1\partial(a)=-a_0\partial(a)=0$ and $v_1w=\beta_{v,w}{\bf 1}_\mathcal{A}+a$ for some $\beta_{v,w}\in\mathbb{C}$ because $\partial(a)=w=v_0w=-w_0v+\partial(v_1w)=\partial(v_1w)$. 

Now, we set 
\begin{eqnarray*}
a\cdot u&=&\gamma_uu+\gamma_v v+\gamma_a\partial(a),\\
u_1u&=&\beta_{u,u}{\bf 1}_\mathcal{A},~ v_1v=\beta_{v,v}{\bf 1}_\mathcal{A}.
\end{eqnarray*}
Since $(a\cdot u)_1u=a*(u_1u)-u_0u_0a$, and $u_0a=\mu_{u,a}{\bf 1}_\mathcal{A}$, we have
\begin{eqnarray*}
   a*(\beta_{u,u}{\bf 1}_\mathcal{A})&=&(\gamma_uu+\gamma_v v+\gamma_a\partial(a))_1u\\
   &=&\gamma_u\beta_{u,u}{\bf 1}_\mathcal{A}+\gamma_v a+\gamma_au_0a\\
   &=&\gamma_u\beta_{u,u}{\bf 1}_\mathcal{A}+\gamma_v a+\gamma_a\mu_{u,a}{\bf 1}_\mathcal{A}.
   \end{eqnarray*}
Hence, $(\beta_{u,u}-\gamma_{v})a+(-\gamma_u\beta_{u,u}-\gamma_a\mu_{u,a}){\bf 1}_\mathcal{A}=0$. This implies that 
    $\beta_{u,u}=\gamma_v$, and $
       -\gamma_u\beta_{u,u}=\gamma_a\mu_{u,a}$.
   Since $u_0(a\cdot u)=(u_0a)\cdot u$, we have 
    \begin{eqnarray*}
       u_0(\gamma_uu+\gamma_v v+\gamma_a\partial(a))&=&\mu_{u,a}u\\
       \gamma_v\partial(a)+\gamma_a\partial(u_0a)&=&\mu_{u,a}u\\
       \gamma_v\partial(a)&=&\mu_{u,a}u
   \end{eqnarray*}
Since $\{u,\partial(a)\}$ is linearly independent, we have $\gamma_v=0$ and $\mu_{u,a}=0$. This implies that $\beta_{u,u}=0,~u_1u=0,~a\cdot u=\gamma_uu+\gamma_a\partial(a), ~u_0a=0$. Also, $u_1\partial(a)=u_0a=0$.

Since $v_0(a\cdot u)-a\cdot (v_0u)=(v_0a)\cdot u$, we have 
\begin{eqnarray*}
    v_0(\gamma_uu+\gamma_a\partial(a))&=&(a+\mu_{v,a}{\bf 1}_\mathcal{A})\cdot u\\
  \gamma_a\partial(a)&=&\gamma_u u+\gamma_a\partial(a)+\mu_{v,a}u. \, 
\end{eqnarray*}
Therefore, $\gamma_u=-\mu_{v,a}$, and $a\cdot u=-\mu_{v,a}u+\gamma_a\partial(a)$. Since $(a\cdot u)_1v=a*(u_1v)-u_0v_0a$, we have 
\begin{eqnarray*}
  (-\mu_{v,a}u+\gamma_a\partial(a))_1v&=&a*a-u_0(a+\mu_{v,a}{\bf 1}_\mathcal{A}) \, \,\\
   (-\mu_{v,a}+\gamma_a)a+\gamma_a\mu_{v,a}{\bf 1}_\mathcal{A}&=&a*a .
\end{eqnarray*}
This implies that $a\cdot\partial(a)=\frac{1}{2}\partial(a*a)=\frac{1}{2}(-\mu_{v,a}+\gamma_a)\partial(a)$.

Next, we will show that $a*a=0$, $a\cdot\partial(a)=0$, $v_0a=a$, $v_1\partial(a)=a$, $a\cdot v=\rho\partial(a)$, and $v_1v=(1+\rho)1_\mathcal{A}$ for some $\rho\in\mathbb{C}$. We set $a\cdot v=\rho_u u+\rho_v v+\rho_a\partial(a)$. Since $u_0(a\cdot v)-a\cdot (u_0v)=(u_0a)\cdot v$ we have 
\begin{eqnarray*}
   u_0(\rho_u u+\rho_v v+\rho_a\partial(a))&=&a\cdot \partial(a)\\
   \rho_v\partial(a)&=&\frac{1}{2}(-\mu_{v,a}+\gamma_a)\partial(a).
\end{eqnarray*}
Hence, $\rho_v=\frac{1}{2}(-\mu_{v,a}+\gamma_a)$. 
Since $v_0(a\cdot v)=(v_0a)\cdot v$, we have 
\begin{eqnarray*}
   \rho_av_0\partial(a)&=&(a+\mu_{v,a}{\bf 1}_\mathcal{A})\cdot v \\
   \rho_a \partial(a)&=&\rho_u u+\rho_v v+\rho_a\partial(a)+\mu_{v,a}v.
   \end{eqnarray*}
Hence, $\rho_u=0$, $\rho_v+\mu_{v,a}=0$. Since $\rho_v=\frac{1}{2}(-\mu_{v,a}+\gamma_a)$ we have $\frac{1}{2}(\mu_{v,a}+\gamma_a)=0$. Since $a*a=(-\mu_{v,a}+\gamma_a)a+\gamma_a\mu_{v,a}{\bf 1}_\mathcal{A}$, we have 
\begin{eqnarray*}
    a*a&=&-\mu_{v,a}(2a+\mu_{v,a}{\bf 1}_\mathcal{A}). 
\end{eqnarray*} 
This implies that $a\cdot \partial(a)=-\mu_{v,a}\partial(a)$. In addition, we have $a\cdot v=-\mu_{v,a}v+\rho_a\partial(a)$.

Since $(a\cdot v)_1u=a*(v_1u)-v_0u_0a$, we have 
\begin{eqnarray*}
    (-\mu_{v,a}v+\rho_a\partial(a))_1u&=&a*a\\
    -\mu_{v,a}v_1u&=&a*a\\
    -\mu_{v,a}a&=&-2\mu_{v,a}a-\mu_{v,a}^2{\bf 1}_\mathcal{A}.
\end{eqnarray*}
Since $\{1_\mathcal{A},a\}$ is a basis of $\mathcal{A}$, we can conclude that $\mu_{v,a}=0$. Therefore, $a*a=0$, $a\cdot\partial(a)=0$, $a\cdot v=\rho_a\partial(a)$, $v_0a=a$, and $v_1\partial(a)=a$. In addition, since $a\cdot u=-\mu_{v,a}u+\gamma_a \partial(a)$, $\gamma_a=-\mu_{v,a}$, we can conclude that $a\cdot u=0$.

Since $(a\cdot v)_1v=a*(v_1v)-v_0v_0a$, we have 
\begin{eqnarray*}
    (\rho_a\partial(a))_1v&=&a*(\beta_{v,v}1_\mathcal{A})-v_0(a)\\
    \rho_a a&=&\beta_{v,v}a-a\\
    (\beta_{v,v}-\rho_a-1)a&=&0.
\end{eqnarray*} Hence, $\beta_{v,v}=1+\rho_a$ and $v_1v=(1+\rho_a)1_\mathcal{A}$.
\end{proof}
\item ${(V_{\mathcal{B}})}_{0}=\mathcal{A}$ is a graded Gorenstein algebra with a non-degenerate invariant bilinear form $B(~,~)$ defined by $B(1_\mathcal{A},a)=1$ and $B(1_\mathcal{A},1_\mathcal{A})=0$. $\mathcal{A}$ is a local algebra with the maximal ideal $\mathfrak{m}=(a)$, $soc(\mathcal{A})=\mathbb{C}a$ and $t=a$. Moreover, with the assumption that $v_1v=(1+\rho_a)1_\mathcal{A}$, the following statements hold:
\begin{enumerate}
    \item if $(1+\rho_a)=0$, $M=Ann_{{(V_{\mathcal{B}})}_1}(t_{-1})=Span\{u,\partial(a)\}$ but $rad((~,~))={(V_{\mathcal{B}})}_1$. 
    \item However,  when $(1+\rho_a)\neq 0$, $M=Ann_{{(V_{\mathcal{B}})}_1}(t_{-1})=rad((~,~))=Span\{u,\partial(a)\}$.
\end{enumerate} 
\begin{proof} From the above, $\mathcal{A}$ is a local algebra with the maximal ideal $\mathfrak{m}=(a)$, $Soc(\mathcal{A})=\mathbb{C}a$ and $t=a$. 
Now, we will show that $\mathcal{A}$ is a graded Gorenstein algebra. Let $B:\mathcal{A}\times \mathcal{A}\rightarrow \mathbb{C}$ be an invariant bilinear form defined by $B(1_\mathcal{A},a)=1$ and $B(1_\mathcal{A},1_\mathcal{A})=0$. This implies that $B(a,a)=0$. Hence, $B(~,~)$ is non-degenerate and $\mathcal{A}=\mathbb{C}{\bf 1}\oplus \mathbb{C}a$ is a Poincar\'{e} duality algebra.

Clearly, $Ann_{{(V_{\mathcal{B}})}_1}(t_{-1})=Span\{\partial(a), u\}=M$.
Now, let $((~,~)):\mathcal{B}\times \mathcal{B}\rightarrow \mathbb{C}$ be a bilinear map defined by $((b,b'))=B(b_1b,t)$. We will study $rad((~,~))$. Let $\tau=\alpha_u u+\alpha_v v+\alpha_a\partial(a)\in \mathcal{B}$. Recall that $u_1u=0$, $u_1v=a$, $v_1v=(1+\rho_a)1_\mathcal{A}$ and $v_1\partial(a)=a$, $u_1\partial(a)=0$. We have 
\begin{eqnarray*}
&&((\tau,u))=B((\alpha_u u+\alpha_v v+\alpha_a\partial(a))_1u,t)=B(\alpha_v a,t)=0,\\ &&((\tau,\partial(a)))=B((\alpha_u u+\alpha_v v+\alpha_a\partial(a))_1\partial(a),t)=B(\alpha_v a,t)=0.
\end{eqnarray*}
Observe that \begin{eqnarray*}
    ((\tau,v))&=&B((\alpha_u u+\alpha_v v+\alpha_a\partial(a))_1v,t)\\
    &=&B(\alpha_u a+\alpha_v (1+\rho_a)1_\mathcal{A}+\alpha_a a,t)\\
     &=&B(\alpha_v (1+\rho_a)1_\mathcal{A},t)\\
     &=&\alpha_v (1+\rho_a)B(1_\mathcal{A},t).
\end{eqnarray*}
If $(1+\rho_a)=0$ then $rad((~,~))={(V_{\mathcal{B}})}_1$. Now, let us assume that $(1+\rho_a)\neq 0$. For this case we have $\tau\in rad((~,~))$ if and only if $\alpha_v=0$. In addition, $rad((~,~))=Span\{u,\partial(a)\}$. In summary, when $(1+\rho_a)=0$, $M=Ann_{{(V_{\mathcal{B}})}_1}(t_{-1})=Span\{u,\partial(a)\}$ but $rad((~,~))={(V_{\mathcal{B}})}_1$. However,  when $(1+\rho_a)\neq 0$, $M=Ann_{{(V_{\mathcal{B}})}_1}(t_{-1})=rad((~,~))=Span\{u,\partial(a)\}$.
\end{proof}
\item Assume that $L(0)$ and $L(1)$ on $\mathcal{A}\oplus\mathcal{B}$ satisfy conditions in Proposition \ref{VBsemiconformal} (2) and equations (\ref{cond1}), (\ref{cond2}). Hence, the following statements hold: $L(0)(V_{\mathcal{B}})_0=L(0)\mathcal{A}=0$, $L(0)|_{(V_{\mathcal{B}})_1}=L(0)|_{\mathcal{B}}=1$, and $L(1)D(V_0)=0$. Then $V_{\mathcal{B}}$ is a self-dual semiconformal-vertex algebra with $L(1)u=0$, $L(1)v=-1_\mathcal{A}+ya$ for some $y\in\mathbb{C}$. If $1+\rho_a\neq 0$ or $\rho_a=0$ then $g=v$ generates the rank one Heisenberg vertex operator algebra.

\begin{proof}
Now, we will determine when $V_{\mathcal{B}}$ is a semiconformal -vertex algebra. Assume that $L(1)(V_{\mathcal{B}})_1\neq 0$. We set $L(1)u=x_11_\mathcal{A}+x_2a$, $L(1)v=y_11_\mathcal{A}+y_2 a$, and $L(1)\partial(a)=0$. Since $L(1)a_{-1}u=a_{-1}L(1)u-a_0u$, $a_{-1}u=0$, $a_0u=0$ and $a_{-1}a=0$, we have $0=a_{-1}(x_1 1_\mathcal{A}+x_2 a)=x_1a$. Hence, $x_1=0$ and $L(1)u=x_2a$. Similarly, since $a_{-1}v=\rho_a\partial(a)$, $v_0a=a$, $L(1)a_{-1}v=a_{-1}L(1)v-a_0v$, we have $0=a_{-1}(y_1 1_\mathcal{A}+y_2 a)+a=y_1 a+ a$. Hence, $y_1=-1$ and $L(1)v=-1_\mathcal{A}+y_2a$. Since $L(1)(u_0v)=(L(1)u)_0v+u_0L(1)v$, $u_0v=\partial(a)$, we have $0=(x_2 a)_0v+u_0(-1_\mathcal{A}+y_2a)=-x_2a$. Hence, $x_2=0$ and $L(1)u=0$. There, $L(1)(V_{\mathcal{B}})_1=Span\{1_\mathcal{A}-y_2a\}$. Moreover, $V_{\mathcal{B}}$ is semiconformal, $\dim (V_{\mathcal{B}})_0/L(1)(V_{\mathcal{B}})_1=1 $, and $V_{\mathcal{B}}$ has a unique non-degenerate bilinear form.

Clearly, if either $1+\rho_a\neq 0$ or $\rho_a=0$ then $v$ generates rank one Heisenberg vertex operator algebra $M(1)$. We set $g=a_v v++a_u u+a_a\partial(a)$. Notice $g_0t=a_v v_0t+a_u u_0t+a_a\partial(a)_0t=a_v t$. Hence, $g_0t=t$ if and only if $a_v=1$. Now, we set $a_v=1$, and assume that $g_0g=0$. Because $g_0g=(v+a_u u+a_a\partial(a))_0(v++a_u u+a_a\partial(a))=a_a\partial(a)+a_u\partial(a))$, we can conclude that $a_a=-a_u=0$, and $g=v+a_u(u-\partial(a))$. Next, we will study $g_1g$. By direct calculation, we have
\begin{eqnarray*}
    g_1g&=&(v+a_u(u-\partial(a)))_1(v+a_u(u-\partial(a)))\\
    &=&v_1v+2a_u(a+a)\\
    &=&(1+\rho_a)1_\mathcal{A}+4a_ua\in\mathbb{C}^{\times }1_\mathcal{A}\oplus\mathbb{C}t.
\end{eqnarray*}
If $a_u=0$ then $g=v$, and $g$ generates a rank one Heisenberg vertex operator algebra.
\end{proof}
\end{itemize}
\end{ex}


\section{Appendix}\label{appendix}
\subsection{Nilradical and Jacobson radical} 

Materials in this subsection are based on \cite{Brookes2019}.

Throughout this subsection, $R$ is a commutative ring with an identity $\bf 1$.

\begin{prop} The set of nilpotent elements of a commutative ring $R$ form an ideal ${N}(R)$.
\end{prop}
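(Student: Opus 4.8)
The plan is to verify directly that $N(R)$ is closed under addition and absorbs multiplication by arbitrary ring elements, then note $0\in N(R)$ trivially. Write $N(R)=\{x\in R\mid x^m=0\text{ for some }m\in\mathbb{N}\}$. Since $0^1=0$, we have $0\in N(R)$, so $N(R)$ is nonempty.

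For the absorption property, I would take $x\in N(R)$ with $x^m=0$ and $r\in R$ arbitrary. Using commutativity of $R$, $(rx)^m=r^mx^m=0$, so $rx\in N(R)$. (Commutativity is essential here and is part of the standing hypothesis on $R$ in this subsection.)

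For closure under addition, I would take $x,y\in N(R)$ with $x^m=0$ and $y^n=0$, and expand $(x+y)^{m+n-1}$ by the binomial theorem — valid since $R$ is commutative:
\begin{equation*}
(x+y)^{m+n-1}=\sum_{k=0}^{m+n-1}\binom{m+n-1}{k}x^k y^{m+n-1-k}.
\end{equation*}
The key combinatorial observation is a pigeonhole on the exponents: in each summand, if $k\geq m$ then $x^k=0$, while if $k\leq m-1$ then $m+n-1-k\geq n$ so $y^{m+n-1-k}=0$; in either case the term vanishes. Hence $(x+y)^{m+n-1}=0$ and $x+y\in N(R)$.

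There is no real obstacle here; the only point requiring care is that both the factorization $(rx)^m=r^mx^m$ and the binomial expansion rely on commutativity, and that the exponent $m+n-1$ is chosen precisely so that the pigeonhole argument forces every term to be zero. Combining the three bullet points — $0\in N(R)$, closure under addition, and absorption of multiplication — establishes that $N(R)$ is an ideal of $R$.
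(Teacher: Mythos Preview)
Your proof is correct and is the standard argument for this classical fact. The paper itself does not supply a proof of this proposition; it is stated without proof in the appendix as background material (with a reference to \cite{Brookes2019}), so there is nothing to compare your approach against.
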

\begin{dfn}

\ \ 

\begin{enumerate}
    \item (nilradical)
    The ideal ${N}(R)$ is the nilradical of $R$.
    \item (Jacobson radical) The {\em Jacobson radical} of $R$ is the intersection of all the maximal ideals of $R$, written $J(R)$.
\end{enumerate}
\end{dfn}
\begin{rem}\cite{Hungerford1974} Let $R$ be a ring. Then $J(R)$ is the intersection of all the left annihilators of simple left $R$-modules.   
\end{rem}
\begin{prop} Let $k$ be a field of characteristic 0.  

\begin{enumerate}\item Let $R$ be a commutative ring with an identity ${\bf 1}$, we have ${N}(R)\subseteq J(R)$.
\item (weak Nullstellensatz) Let $T$ be a finitely generated $k$-algebra and let $Q$ be a maximal ideal. Then $T/Q$ is a finite field extension of $k$. In particular, if $k$ is algebraically closed and $T=k[X_1,...,X_n]$, the polynomial algebra, then $Q$ is of the form $(X_1-a_1,..., X_n-a_n)$ for some $(a_1,...,a_n)\in k^n$.
\item Let $R$ be a finitely generated $k$-algebra. Then $N(R)=J(R)$.
\item If $R$ is Noetherian then $N(R)$ is nilpotent, i.e. $(N(R))^m=0$ for some $m$.
   \end{enumerate} 
\end{prop}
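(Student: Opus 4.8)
The plan is to dispatch the four parts in the order listed, since part (3) rests on parts (1) and (2) while the other two are self-contained. For part (1) I would argue directly: if $x \in R$ satisfies $x^n = 0$ and $M$ is any maximal ideal of $R$, then the image of $x$ in the field $R/M$ is nilpotent, hence zero, so $x \in M$; since $M$ was arbitrary, $x \in \bigcap M = J(R)$. Equivalently, one may invoke the standard characterization $x \in J(R) \iff 1 - rx \in R^{\times}$ for all $r \in R$, the inverse being the finite geometric sum $\sum_{j \ge 0} (rx)^j$ when $x$ is nilpotent.

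For part (2) the engine is Zariski's lemma: a field that is finitely generated as an algebra over a field $k$ is finite (algebraic) over $k$. Granting it, $T/Q$ --- a field, and a quotient of the finitely generated $k$-algebra $T$ --- is finite over $k$. When $k$ is algebraically closed and $T = k[X_1,\dots,X_n]$, this forces $T/Q = k$, so the projection is a $k$-algebra homomorphism with $X_i \mapsto a_i$; its kernel $Q$ then contains the maximal ideal $(X_1 - a_1,\dots,X_n - a_n)$ and, being proper, equals it. Zariski's lemma itself I would cite to \cite{Brookes2019}, or prove via Noether normalization: write $T/Q$ as module-finite over a polynomial subring $k[z_1,\dots,z_d]$ with $z_1,\dots,z_d$ algebraically independent over $k$; the sub-lemma ``a field $B$ integral over a subring $A$ forces $A$ to be a field'' (clear denominators using a monic relation for $a^{-1}$) then shows $k[z_1,\dots,z_d]$ is a field, whence $d = 0$ and $T/Q$ is finite over $k$.

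For part (3), recall first the standard fact that $N(R) = \bigcap_{\mathfrak p} \mathfrak p$ over all primes $\mathfrak p$ (if $x \notin N(R)$ then $R_x \ne 0$, so $R_x$ has a prime ideal whose contraction to $R$ misses $x$). By part (1) it remains to show $J(R) \subseteq N(R)$, so take $x \notin N(R)$ and fix a prime $\mathfrak p$ with $x \notin \mathfrak p$; it suffices to produce a maximal ideal of $R$ containing $\mathfrak p$ but not $x$. Replacing $R$ by $R/\mathfrak p$ --- still a finitely generated $k$-algebra, now a domain --- and $x$ by its nonzero image reduces the task to: in a finitely generated $k$-algebra domain with $0 \ne x$, find a maximal ideal missing $x$. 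For this I would localize at $x$: $R_x = R[1/x]$ is a nonzero finitely generated $k$-algebra, hence has a maximal ideal $\mathfrak m$, and by part (2) the field $R_x / \mathfrak m$ is finite over $k$. The image of $R$ in $R_x / \mathfrak m$ is a domain algebraic over $k$, hence itself a field, so $\mathfrak n := \mathfrak m \cap R$ is maximal in $R$; and $x$ is a unit in $R_x$, so $x \notin \mathfrak m$ and therefore $x \notin \mathfrak n$. Pulling $\mathfrak n$ back through $R \to R/\mathfrak p$ gives the desired maximal ideal, so $x \notin J(R)$, and combined with part (1) this yields $N(R) = J(R)$.

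For part (4), Noetherianity makes $N(R)$ finitely generated, say by nilpotents $x_1,\dots,x_r$ with $x_i^{n_i} = 0$. Put $m = 1 + \sum_{i=1}^{r}(n_i - 1)$. Then $N(R)^m$ is generated by the monomials $x_1^{a_1}\cdots x_r^{a_r}$ with $a_1 + \cdots + a_r = m$; since $m > \sum_i (n_i - 1)$, the pigeonhole principle forces some $a_i \ge n_i$, so every such monomial vanishes and $N(R)^m = 0$. The only substantive obstacle in the whole proposition is Zariski's lemma behind part (2) (which is reused in part (3)); parts (1) and (4) are elementary, and the reductions in part (3) are formal once part (2) is available.
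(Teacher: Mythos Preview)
Your proof is correct and follows the standard route through Zariski's lemma and the prime-intersection description of the nilradical. The paper, however, gives no proof of this proposition at all: it appears in the appendix as background material, stated without argument and attributed to \cite{Brookes2019}. So there is nothing to compare against beyond noting that your write-up supplies what the paper omits.
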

\begin{prop}
    Let $R$ be a ring that is not necessarily commutative. If $R$ is right Artinian, then $J(R)$ is nilpotent and $R$ is right Noetherian.
\end{prop}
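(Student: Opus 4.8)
The plan is to prove the two assertions in the stated order, deducing the Noetherian conclusion from the nilpotency of $J:=J(R)$; throughout I would use that $J$ is a two-sided ideal and that $1-x$ is a unit of $R$ for every $x\in J$.

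\textbf{Nilpotency of $J$.} First I would observe that, $R$ being right Artinian, the descending chain of two-sided ideals $J\supseteq J^2\supseteq J^3\supseteq\cdots$ must stabilize, say $I:=J^n=J^{n+1}=\cdots$, so that $I^2=J^{2n}=J^n=I$ and, since $I$ is two-sided and $1\in R$, $RI=IR=I$. The goal is to show $I=0$. Arguing by contradiction, suppose $I\ne 0$ and let $\mathcal F$ be the family of right ideals $A$ of $R$ with $AI\ne 0$; it is nonempty since $RI=I\ne 0$. Using the descending chain condition on right ideals, choose $A$ minimal in $\mathcal F$ and pick $a\in A$ with $aI\ne 0$. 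Then $aR\subseteq A$ is a right ideal with $(aR)I=aI\ne 0$, so minimality forces $aR=A$; similarly $aI\subseteq aR=A$ is a right ideal (because $(aI)R=a(IR)=aI$) with $(aI)I=aI^2=aI\ne 0$, so minimality forces $aI=A$. Hence $a\in A=aI$, so $a=ax$ for some $x\in I\subseteq J$, i.e. $a(1-x)=0$; as $1-x$ is a unit, $a=0$, contradicting $aI\ne 0$. Therefore $I=J^n=0$.

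\textbf{$R$ is right Noetherian.} With $J^n=0$ fixed, I would pass to the filtration $R\supseteq J\supseteq J^2\supseteq\cdots\supseteq J^n=0$ and treat the successive quotients $M_i:=J^i/J^{i+1}$ for $0\le i\le n-1$. Each $M_i$ is annihilated by $J$, hence is a right module over $\bar R:=R/J$, and it is a subquotient of $R_R$, hence right Artinian. Now $\bar R$ is right Artinian with $J(\bar R)=0$, so by the Artin--Wedderburn structure theorem $\bar R$ is a semisimple ring; every $\bar R$-module is then a direct sum of simple submodules, and an Artinian one is necessarily a finite such sum, hence Noetherian. Thus each $M_i$ is a Noetherian right $\bar R$-module, equivalently a Noetherian right $R$-module. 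Finally, applying repeatedly the fact that the middle term of a short exact sequence of modules is Noetherian once both outer terms are, an induction up the filtration shows that $J^i$ is Noetherian as a right $R$-module for all $i$; in particular $R=J^0$ is Noetherian over itself, i.e. $R$ is right Noetherian.

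\textbf{Main obstacle.} The delicate part is the nilpotency argument: it must be run with right ideals so as to invoke the right Artinian hypothesis, while simultaneously using that $J$ (hence $I$) is two-sided to secure $RI=IR=I$, and it ultimately rests on the unit property $1-J\subseteq R^{\times}$ to pass from $a=ax$ to $a=0$. The Noetherian half is then essentially bookkeeping, the only genuine external input being the Artin--Wedderburn description of the semisimple ring $R/J$.
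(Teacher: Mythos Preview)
Your proof is correct; it is the standard Hopkins--Levitzki argument. However, the paper does not actually give a proof of this proposition: it appears in the Appendix as background material quoted from the literature (the subsection notes that it is based on \cite{Brookes2019}) and is stated without proof. So there is nothing in the paper to compare your approach against; your argument simply fills in what the authors left as a cited fact, and both steps --- the minimality/Nakayama-style contradiction for nilpotency of $J(R)$ and the filtration argument via semisimplicity of $R/J(R)$ for the Noetherian conclusion --- are carried out cleanly.
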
 
\begin{dfn}(socle)
    The socle of a non-zero Artinian module $M$, denoted $soc(M)$, is the sum of all the simple submodules of $M$
\end{dfn}
\begin{rem}
    Since $M$ is non-zero Artinian, it does have minimal non-zero submodules, which are necessarily simple. Then $soc(M)\neq 0$.
\end{rem}
\begin{prop}\label{annihilatorofV_0} Let $R$ be a ring and let $M$ be a non-zero Artinian module of $R$. Then $soc(M)=\{m\in M~|~mu=0\text{ for all }u\in J(R)\}$.
\end{prop}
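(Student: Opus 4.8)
The plan is to put $J=J(R)$ and $A=\{m\in M\mid mu=0\text{ for all }u\in J\}$, which is the right-hand side of the claimed identity, and to prove $soc(M)=A$ by two inclusions, after first noting that $A$ is an $R$-submodule of $M$. That $A$ is a submodule is immediate: since $J$ is a (two-sided) ideal of the commutative ring $R$, for $m\in A$ and $r\in R$ we have $J(rm)=(Jr)m\subseteq Jm=0$, and $A$ is visibly closed under addition.

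For the inclusion $soc(M)\subseteq A$ I would argue as follows. Let $S\subseteq M$ be any simple submodule. By the Remark recalled in this subsection, $J=J(R)$ lies in the annihilator of every simple $R$-module, so $JS=0$, that is $S\subseteq A$. Since $A$ is a submodule of $M$ and $soc(M)$ is, by definition, the sum of all simple submodules of $M$, it follows that $soc(M)\subseteq A$.

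For the reverse inclusion $A\subseteq soc(M)$ the crux is that $A$ is a semisimple module. Because $JA=0$, the submodule $A$ is naturally a module over $\bar R:=R/J$. In the situation in which this proposition is applied, $R$ is a finite-dimensional (hence Artinian) commutative algebra, so by the preceding proposition $J$ is nilpotent, and a commutative Artinian ring with vanishing Jacobson radical is a finite product of fields, hence a semisimple ring; therefore every $\bar R$-module, and in particular $A$, is semisimple. A semisimple $R$-submodule of $M$ is a sum of simple submodules, each of which is contained in $soc(M)$, so $A\subseteq soc(M)$; combined with the first inclusion this yields $soc(M)=A$.

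The main obstacle is this last inclusion, and the point worth flagging is that semisimplicity of $A$ is not a consequence of $M$ being Artinian alone: for instance $\mathbb{Z}/p^{2}\mathbb{Z}$ is an Artinian $\mathbb{Z}$-module annihilated by $J(\mathbb{Z})=0$ yet is not semisimple, so one genuinely needs $R/J(R)$ to be a semisimple ring. This holds as soon as $R$ is Artinian --- via Artin--Wedderburn, or the finite-product-of-fields description in the commutative case --- and in particular throughout this paper, where $R=V_0$ is finite-dimensional; with that input the argument above goes through without change.
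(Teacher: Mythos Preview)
The paper does not supply a proof of this proposition; it is recorded in the appendix as background drawn from \cite{Brookes2019}, so there is no reference argument to compare against.

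Your proof of $soc(M)\subseteq A$ via the characterisation of $J(R)$ as the intersection of annihilators of simple modules is clean and correct. More importantly, you have put your finger on a genuine issue: the statement as printed is not true in the generality claimed. Your example $M=\mathbb{Z}/p^{2}\mathbb{Z}$ over $R=\mathbb{Z}$ is decisive --- $M$ is Artinian, $J(\mathbb{Z})=0$, so the right-hand side is all of $M$, yet $soc(M)=p\mathbb{Z}/p^{2}\mathbb{Z}$. The missing hypothesis is precisely that $R/J(R)$ be semisimple (for commutative $R$, that $R$ be Artinian), and with that your second inclusion goes through exactly as you wrote it: $A$ becomes an $R/J$-module, hence semisimple, hence contained in $soc(M)$. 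This gap is harmless for the paper's applications, since $V_0$ is always a finite-dimensional commutative algebra and therefore Artinian; your closing remark already says this, and it is the right way to dispose of the matter.
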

\subsection{Gorenstein Algebras}

Let $\mathcal{A}$ be a commutative associative algebra with an identity ${\bf 1}$ over a field $k$ of characteristic 0. Assume that $\mathcal{A}$ is finitely generated. Then there exist elements $a_1,...,a_n\in \mathcal{A}$ such that every element in $\mathcal{A}$ can be expressed as a polynomial in $a_1,...,a_n$ with coefficients in $k$. Let $\psi:k[X_1,...,X_n]\rightarrow \mathcal{A}$ be a linear map defined by $\psi(X_i)=a_i$ for all $1\leq i\leq n$ and $\psi(X_1^{t_1}...X_n^{t_n})=a_1^{t_1}....a_n^{t_n}$ for all $t_j\geq 0$. $\psi$ is an algebra homomorphism. Clearly $Ker(\psi)$ is an ideal of $k[X_1,...,X_n]$ as an algebra. By Hilbert Basis Theorem, $Ker(\psi)$ is finitely generated and there exist $f_1,...,f_d\in Ker(\psi)$ such that $Ker(\psi)=(f_1,...,f_d)$. Hence, \begin{equation} \label{gorenstein} \mathcal{A}\cong k[X_1,...,X_n]/(f_1,...,f_d).\end{equation}

The commutative associative algebra $\mathcal{A}$ is local if and only if $\mathcal{A}$ has a unique maximal ideal $\mathfrak{m}$.

\subsubsection{Artinian Gorenstein Rings} Materials in this subsection are based on information in \cite{Eisenbud1995, Kock2003}

\begin{dfn} Let $k$ be a field of characteristic 0. Let $\mathcal{A}$ be a commutative Artinian local ring with maximal ideal $\mathfrak{m}$. The ring $\mathcal{A}$ is {\em Gorenstein} if $soc(\mathcal{A})$ is a simple $\mathcal{A}$-module. 
\end{dfn}
\begin{rem} Assume that $\mathcal{A}$ is a Gorenstein. Then there are no nontrivial submodules in $soc(\mathcal{A})$. Since $\mathcal{A}$ is a local ring, this implies that $soc(\mathcal{A})\cong \mathcal{A}/\mathfrak{m}$. Moreover, $soc(\mathcal{A})$ is contained in every nonzero ideal of $\mathcal{A}$. There is a positive integer $n$ such that $\mathfrak{m}^n=0$ while $\mathfrak{m}^{n-1}\neq 0$, and $soc(\mathcal{A})=\mathfrak{m}^{n-1}$. If $\mathcal{A}$ is finite-dimensional vector space over $k$ then $\mathcal{A}$ can be made into a Frobenius algebra. 
\end{rem}
\begin{ex}\ \ 

\begin{enumerate}
    \item $k[X,Y]/(X^2,Y^2)$ is a Gorenstein ring and its socle is generated by $XY+(X^2,Y^2)$.
    \item $k[X]/(X^n)$ is a Gorenstein ring and its socle is generated by $X^{n-1}+(X^n)$.
\end{enumerate}
\end{ex}

\begin{dfn}\cite{Smith1995} Let $k$ be a field of characteristic 0. A finite-dimensional graded $k$-algebra $\mathcal{A}=\oplus_{d=0}^s \mathcal{A}_d$ is called the Poincar\'{e} duality algebra if $\dim_k \mathcal{A}_s=1$ and the bilinear pairing $\mathcal{A}_d\times \mathcal{A}_{s-d}\rightarrow \mathcal{A}_s\cong k$ is non-degenerate for $d=0,...,[s/2]$.
\end{dfn}
\begin{prop}\label{poincareduality}\cite{Geramita2007} A graded Artinian $k$-algebra $\mathcal{A}=\oplus_{d=0}^s\mathcal{A}_d$ is the Poincar\'{e} duality algebra if and only if $\mathcal{A}$ is Gorenstein. If $\mathcal{A}$ is a Gorenstien algebra then $\mathcal{A}_s=soc(\mathcal{A})$
\end{prop}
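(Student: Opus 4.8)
Write $\mathfrak{m}=\bigoplus_{d\geq 1}\mathcal{A}_d$ for the homogeneous maximal ideal of $\mathcal{A}$; since $\mathcal{A}$ is graded Artinian it is finite--dimensional over $k$, $\mathfrak{m}$ is nilpotent, and $\mathfrak{m}=J(\mathcal{A})$. In the Poincar\'{e} duality situation the non--degenerate pairing $\mathcal{A}_0\times\mathcal{A}_s\to\mathcal{A}_s\cong k$ together with $\dim_k\mathcal{A}_s=1$ forces $\mathcal{A}_0=k$, so in both cases $\mathcal{A}$ is a connected graded local ring and the notion ``Gorenstein'' makes sense. By Proposition \ref{annihilatorofV_0}, $soc(\mathcal{A})=Ann_{\mathcal{A}}(\mathfrak{m})$, and because $\mathfrak{m}$ is homogeneous this annihilator is a graded submodule, $soc(\mathcal{A})=\bigoplus_d soc(\mathcal{A})_d$. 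The single elementary fact used throughout is that $\mathcal{A}_s\,\mathfrak{m}\subseteq\bigoplus_{d>s}\mathcal{A}_d=0$, so $0\neq\mathcal{A}_s\subseteq soc(\mathcal{A})$.

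First I would show that a Poincar\'{e} duality algebra is Gorenstein, together with $\mathcal{A}_s=soc(\mathcal{A})$. If $x\in soc(\mathcal{A})_d$ is homogeneous with $d<s$, then $x\,\mathcal{A}_{s-d}=0$ with $s-d\geq 1$, which by non--degeneracy of $\mathcal{A}_d\times\mathcal{A}_{s-d}\to\mathcal{A}_s$ forces $x=0$. Hence $soc(\mathcal{A})=soc(\mathcal{A})_s\subseteq\mathcal{A}_s$; with $\mathcal{A}_s\subseteq soc(\mathcal{A})$ and $\dim_k\mathcal{A}_s=1$ this gives $soc(\mathcal{A})=\mathcal{A}_s$, a one--dimensional, hence simple, $\mathcal{A}$--module, so $\mathcal{A}$ is Gorenstein.

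For the converse I would assume $soc(\mathcal{A})$ is simple. From $0\neq\mathcal{A}_s\subseteq soc(\mathcal{A})$ and $\dim_k soc(\mathcal{A})=1$ one gets $soc(\mathcal{A})=\mathcal{A}_s$ and $\dim_k\mathcal{A}_s=1$, which is both the last assertion and the dimension hypothesis in the definition of a Poincar\'{e} duality algebra. It then remains to check that multiplication $\mathcal{A}_d\times\mathcal{A}_{s-d}\to\mathcal{A}_s$ is non--degenerate. Fix $0\neq x\in\mathcal{A}_d$ and consider the nonzero ideal $\mathcal{A}x$: for a Gorenstein Artinian local ring the simple socle is contained in every nonzero ideal (as in the Remark after the definition of Gorenstein---a nonzero Artinian ideal contains a minimal, hence simple, submodule, which must equal $soc(\mathcal{A})$), so $\mathcal{A}_s\subseteq\mathcal{A}x$. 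Thus $ax\neq 0$ for some $a\in\mathcal{A}$, and replacing $a$ by its degree $s-d$ homogeneous component---the only part that can contribute to $\mathcal{A}_s$ after multiplying by $x\in\mathcal{A}_d$---produces $y\in\mathcal{A}_{s-d}$ with $xy\neq 0$. The left radical of the pairing is therefore $0$; the symmetric argument gives trivial right radical, and a bilinear pairing of finite--dimensional spaces into a one--dimensional space with both radicals trivial is perfect (and forces $\dim_k\mathcal{A}_d=\dim_k\mathcal{A}_{s-d}$). Hence $\mathcal{A}$ is a Poincar\'{e} duality algebra.

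The only genuinely non--formal input is the classical fact that in a Gorenstein Artinian local ring the socle sits inside every nonzero ideal; everything else is grading bookkeeping---that $soc(\mathcal{A})$ is homogeneous, that $\mathfrak{m}=J(\mathcal{A})$, and that one may pass to homogeneous components. I expect the points needing the most care to be making the ``pass to the degree $s-d$ component'' step precise and confirming $\mathcal{A}_0=k$ in the Poincar\'{e} duality direction so that the Gorenstein hypothesis is even defined.
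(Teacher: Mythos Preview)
The paper does not actually supply a proof of this proposition: it is quoted as a known result from \cite{Geramita2007} and stated without argument in the Appendix. So there is no ``paper's own proof'' to compare against.

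That said, your argument is correct and is precisely the standard one. The two places you flagged as delicate are in fact fine. For the Poincar\'{e} duality $\Rightarrow$ Gorenstein direction, when you use non--degeneracy of $\mathcal{A}_d\times\mathcal{A}_{s-d}\to\mathcal{A}_s$ for arbitrary $d<s$ (not only $d\leq[s/2]$), note that if $d>[s/2]$ then $s-d\leq[s/2]$, so the definition applies to the pair $(\mathcal{A}_{s-d},\mathcal{A}_d)$ and commutativity gives what you need. For the converse, your ``pass to the degree $s-d$ component'' step is clean once phrased as follows: since $\mathcal{A}_s\subseteq\mathcal{A}x$ and $\mathcal{A}_s\neq 0$, pick $0\neq t\in\mathcal{A}_s$ and write $t=ax$; decomposing $a=\sum_i a_i$ with $a_i\in\mathcal{A}_i$ yields $t=\sum_i a_i x$ with $a_i x\in\mathcal{A}_{i+d}$, and homogeneity of $t$ forces $a_{s-d}x=t\neq 0$. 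The only implicit assumption worth recording is that $s$ denotes the top nonvanishing degree, so that $\mathcal{A}_s\neq 0$; this is the intended reading of the notation $\mathcal{A}=\oplus_{d=0}^s\mathcal{A}_d$.
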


\subsection{Leibniz algebras} The materials in this subsection are based on the article \cite{DMS}.
\begin{dfn}\cite{DMS, FM}
\begin{enumerate}
    \item A {\em left Leibniz algebra} $\mathfrak{L}$ is a $\mathbb{C}$-vector space equipped with a bilinear map $[~,~]:\mathfrak{L}\times \mathfrak{L}\rightarrow \mathfrak{L}$ satisfying the Leibniz identity $[a,[b,c]]=[[a,b],c]+[b,[a,c]]$ for all $a,b,c\in\mathfrak{L}$.
    \item Let $(\mathfrak{L},[~,~])$ be a left Leibniz algebra over $\mathbb{C}$.
    \begin{enumerate} 
    \item Let $I$ be a subspace of $\mathfrak{L}$. $I$ is a left (respectively, right) ideal of $\mathfrak{L}$ if $[\mathfrak{L},I]\subseteq I$ (respectively, $[I,\mathfrak{L}]\subseteq I$). $I$ is an ideal of $\mathfrak{L}$ if it is both a left and a right ideal.
    \item The series of ideals $....\subseteq \mathfrak{L}^{(2)}\subseteq \mathfrak{L}^{(1)}\subseteq \mathfrak{L}$ where $\mathfrak{L}^{(1)}=[\mathfrak{L},\mathfrak{L}]$, $\mathfrak{L}^{(i+1)}=[\mathfrak{L}^{(i)},\mathfrak{L}^{(i)}]$ is called the derived series of $\mathfrak{L}$. A left Leibniz algebra $\mathfrak{L}$ is solvable if $\mathfrak{L}^{(m)}=0$ for some integer $m\geq 0$. Any left Leibniz algebra $\mathfrak{L}$ contains a unique maximal solvable ideal $rad(\mathfrak{L})$ called the radical of $\mathfrak{L}$ which contains all solvable ideals. 
    \end{enumerate}
    \end{enumerate}
\end{dfn}

We define $Leib(\mathfrak{L})=Span\{[u,u]~|~u\in\mathfrak{L}\}$. The vector space $Leib(\mathfrak{L})$ is, in fact, a solvable ideal of $\mathfrak{L}$. 
\begin{dfn}\cite{DMS}
\begin{enumerate}
 \item A left Leibniz algebra $\mathfrak{L}$ is simple if $[\mathfrak{L},\mathfrak{L}]\neq Leib(\mathfrak{L})$, and $\{0\}$, $Leib(\mathfrak{L})$, $\mathfrak{L}$ are the only ideals of $\mathfrak{L}$.
    \item A left Leibniz algebra $\mathfrak{L}$ is said to be semisimple if $rad(\mathfrak{L})=Leib(\mathfrak{L})$.
\end{enumerate}    
\end{dfn}
\begin{rem}\cite{DMS} The Leibniz algebra $\mathfrak{L}$ is semisimple if and only if $\mathfrak{L}/Leib(\mathfrak{L})$ is semisimple.
\end{rem}
\begin{prop}\label{Levi}\cite{Ba2, DMS} Let $\mathfrak{L}$ be a left Leibniz algebra. 
\begin{enumerate}
    \item There exists a subalgebra $S$ which is a semisimple Lie algebra of $\mathfrak{L}$ such that $\mathfrak{L}$ is a semidirect product of $S$ and $rad(\mathfrak{L})$. As in the case of a Lie algebra, we call $S$ a Levi subalgebra or a Levi factor of $\mathfrak{L}$. 
    \item If $\mathfrak{L}$ is a semisimple Leibniz algebra then $\mathfrak{L}$ is a semidirect product of $S_1\bigoplus\cdots \bigoplus S_k$ and $Leib(\mathfrak{L})$ where $S_j$ is a simple Lie algebra for all $1\leq j\leq k$. Moreover, $[\mathfrak{L},\mathfrak{L}]=\mathfrak{L}$. 
    \item If $\mathfrak{L}$ is a simple Leibniz algebra then there exists a simple Lie algebra $S$ such that $Leib(\mathfrak{L})$ is an irreducible module over $S$ and $\mathfrak{L}$ is a semidirect product of $S$ and $Leib(\mathfrak{L})$.
\end{enumerate}
    
\end{prop}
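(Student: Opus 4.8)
The plan is to bootstrap the classical Levi--Malcev decomposition of Lie algebras to the Leibniz setting through the Leibniz kernel $Leib(\mathfrak{L})$. The first step is the observation that in a left Leibniz algebra one has $[[x,x],y]=0$ for all $x,y$ (apply the Leibniz identity with $a=b=x$), so $[Leib(\mathfrak{L}),\mathfrak{L}]=0$; hence $Leib(\mathfrak{L})$ is an abelian, right-annihilated ideal, it is contained in $rad(\mathfrak{L})$, and $\overline{\mathfrak{L}}:=\mathfrak{L}/Leib(\mathfrak{L})$ is an honest Lie algebra with $rad(\overline{\mathfrak{L}})=rad(\mathfrak{L})/Leib(\mathfrak{L})$ (an extension of a solvable ideal by an abelian one is solvable, and quotients of solvable ideals are solvable). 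Applying Levi--Malcev to $\overline{\mathfrak{L}}$ gives a semisimple Lie subalgebra $\overline{S}$ with $\overline{\mathfrak{L}}=\overline{S}\ltimes rad(\overline{\mathfrak{L}})$.

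For part (1) it then remains to lift $\overline{S}$ to a subalgebra $S$ of $\mathfrak{L}$ transverse to $rad(\mathfrak{L})$. First I would make $Leib(\mathfrak{L})$ into an $\overline{S}$-module by $\overline{x}\cdot n:=[\tilde x,n]$ for any lift $\tilde x$ of $\overline{x}$; this is independent of the lift because $[Leib(\mathfrak{L}),\mathfrak{L}]=0$, and the left Leibniz identity gives $[\tilde x,[\tilde y,n]]-[\tilde y,[\tilde x,n]]=[[\tilde x,\tilde y],n]$, so it is a genuine module action. Choosing a linear section of the projection $\pi\colon\mathfrak{L}\to\overline{\mathfrak{L}}$ over $\overline{S}$, the failure of its image to be closed under bracket is a $2$-cochain valued in $Leib(\mathfrak{L})$; by the Whitehead-type vanishing of the relevant second cohomology of the semisimple $\overline{S}$ with coefficients in $Leib(\mathfrak{L})$ (if one prefers to reduce to the abelian-kernel statement of Whitehead's second lemma, first induct along the derived series of $rad(\mathfrak{L})$, using that $[I,I]$ is again an ideal whenever $I$ is), this cochain can be killed by adjusting the section to a subalgebra $S$ with $\pi|_S\colon S\xrightarrow{\sim}\overline{S}$. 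The lifted $S$ is automatically a Lie algebra: for $s\in S$, $[s,s]\in Leib(\mathfrak{L})\cap S=\{0\}$, since $\pi([s,s])=[\overline{s},\overline{s}]=0$ while a linear section meets $\ker\pi$ trivially. Finally $\mathfrak{L}=S\oplus rad(\mathfrak{L})$ by comparing with $\overline{\mathfrak{L}}=\overline{S}\oplus rad(\overline{\mathfrak{L}})$ and using $Leib(\mathfrak{L})\subseteq rad(\mathfrak{L})$.

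Parts (2) and (3) are then formal. If $\mathfrak{L}$ is semisimple, then $rad(\mathfrak{L})=Leib(\mathfrak{L})$ by definition, so (1) gives $\mathfrak{L}=S\ltimes Leib(\mathfrak{L})$ with $S$ semisimple Lie, hence $S=S_1\oplus\cdots\oplus S_k$ with each $S_j$ simple. For $[\mathfrak{L},\mathfrak{L}]=\mathfrak{L}$ I would note that for $x=s+n\in S\oplus Leib(\mathfrak{L})$ one has $[x,x]=[s,n]$ (the three other terms vanish), so $Leib(\mathfrak{L})=\mathrm{span}\{[x,x]\}=[S,Leib(\mathfrak{L})]\subseteq[\mathfrak{L},\mathfrak{L}]$, which together with $[S,S]=S$ forces $[\mathfrak{L},\mathfrak{L}]=\mathfrak{L}$. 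For (3): if $\mathfrak{L}$ is simple then $rad(\mathfrak{L})$ is an ideal, hence one of $\{0\},Leib(\mathfrak{L}),\mathfrak{L}$; the case $rad(\mathfrak{L})=\mathfrak{L}$ forces the proper ideal $[\mathfrak{L},\mathfrak{L}]$ to be $\{0\}$ or $Leib(\mathfrak{L})$, both contradicting $[\mathfrak{L},\mathfrak{L}]\neq Leib(\mathfrak{L})$, while $rad(\mathfrak{L})=\{0\}$ is the degenerate simple-Lie case; so $rad(\mathfrak{L})=Leib(\mathfrak{L})$ and (2) applies. A proper nonzero $S$-submodule of $Leib(\mathfrak{L})$ would be an ideal of $\mathfrak{L}$ (it is right-annihilated), so $Leib(\mathfrak{L})$ is an irreducible $S$-module; and if $k\geq 2$, then $S_1\oplus Leib(\mathfrak{L})$ is an ideal distinct from $\{0\},Leib(\mathfrak{L}),\mathfrak{L}$, a contradiction, so $k=1$.

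The step I expect to be the real obstacle is the lifting in part (1), specifically pinning down which cohomology governs the splitting: since $\mathfrak{L}$ is only Leibniz, the $2$-cochain measuring non-closedness of the section need not be alternating, and its symmetric part lands in $Leib(\mathfrak{L})$. The way around this is to absorb the symmetric part into a redefinition of the section (legitimate because $[Leib(\mathfrak{L}),\mathfrak{L}]=0$) and then apply the ordinary Whitehead second lemma to the remaining Lie $2$-cocycle, or equivalently to invoke directly the vanishing of Leibniz $2$-cohomology of a semisimple Lie algebra with coefficients in a module. Everything else---the identities involving $Leib(\mathfrak{L})$, that $[I,I]$ is an ideal when $I$ is, the behaviour of $rad$ under the quotient, and the ideal-checking in parts (2)--(3)---is routine bookkeeping with the Leibniz identity and the classical Lie theory.
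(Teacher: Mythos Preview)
The paper does not supply its own proof of this proposition: it is stated in the appendix as a known result with citations to \cite{Ba2} and \cite{DMS}, and no argument is given. Your sketch is essentially the standard proof due to Barnes, namely: pass to the Lie quotient $\mathfrak{L}/Leib(\mathfrak{L})$, apply the classical Levi--Malcev theorem there, and then lift the Levi factor back using a Whitehead-type vanishing result (second cohomology of a semisimple Lie algebra with coefficients in a finite-dimensional module). Your identification of the one genuine subtlety---that the obstruction $2$-cochain is not alternating and one must handle its symmetric part separately, or else invoke the Leibniz-cohomology version of Whitehead's lemma---is exactly the point that distinguishes the Leibniz case from the Lie case, and your proposed fix is the right one. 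The deductions of parts (2) and (3) from (1) are routine and correct; in particular your computation $[x,x]=[s,n]$ to force $Leib(\mathfrak{L})=[S,Leib(\mathfrak{L})]$ in part~(2), and the ideal-counting in part~(3), are clean. So there is nothing to compare against in the paper itself, and your argument matches the approach in the cited literature.
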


\bibliographystyle{alpha}
\bibliography{references}

\end{document}